\DeclareMathOperator{\dir}{dir}
\DeclareMathOperator{\spn}{span}
\DeclareMathOperator{\dec}{dec}
\newtheorem{theorem}{Theorem}[section]
\newtheorem{lemma}[theorem]{Lemma}
\newtheorem{proposition}[theorem]{Proposition}
\newtheorem{corollary}[theorem]{Corollary}
\theoremstyle{remark}\newtheorem{remark}[theorem]{Remark}
\numberwithin{equation}{section}
\DeclareFontFamily{U}{mathx}{\hyphenchar\font45}
\DeclareFontShape{U}{mathx}{m}{n}{
      <5> <6> <7> <8> <9> <10>
      <10.95> <12> <14.4> <17.28> <20.74> <24.88>
      mathx10
      }{}
\DeclareSymbolFont{mathx}{U}{mathx}{m}{n}
\DeclareMathAccent{\widecheck}{0}{mathx}{"71}
\DeclareMathAccent{\wideparen}{0}{mathx}{"75}
\title{An $L^{4/3}$ $SL_2$ Kakeya maximal inequality}
\author{Terence L.~J.~Harris}
\address{Department of Mathematics, University of Wisconsin, 480
Lincoln Drive, Madison, WI, 53706, USA}
\email{terry.harris@math.wisc.edu}
\subjclass[2020]{28A78; 28A80}
\keywords{Besicovitch set, Kakeya maximal function}
\begin{document} 

\begin{abstract} It is shown that $SL_2$ Besicovitch sets of measure zero exist in $\mathbb{R}^3$. The proof is constructive and uses point-line duality analogously to Kahane's construction of measure zero Besicovitch sets in the plane. A corollary is that the $SL_2$ Kakeya maximal inequality cannot hold with uniform constant. A counterexample is given to show that the $SL_2$ Kakeya maximal inequality cannot hold for $p> 3/2$; even in the model case where the $\delta$-tubes have $\delta$-separated directions and the cardinality of the tube family is $\sim \delta^{-2}$. It is then shown that, with $C_{\epsilon} \delta^{-\epsilon}$ loss, the $SL_2$ Kakeya maximal inequality does hold if $p \leq 4/3$, whenever the tubes satisfy a 2-dimensional ball condition (equivalent to the Wolff axioms in the $SL_2$ case). The proof is via an $L^{4/3}$ inequality for restricted families of projections onto planes. For both inequalities, the range $4/3 < p \leq 3/2$ remains an open problem. A related $L^{6/5}$ inequality is derived for restricted projections onto lines. Finally, an application is given to generic intersections of sets in $\mathbb{R}^3$ with ``light rays'' and ``light planes''. 
\end{abstract}

\maketitle

\section{Introduction} Given $(x_1,y_1,t_1), (x_2,y_2,t_2) \in \mathbb{R}^3$, let 
\[ (x_1,y_1,t_1) \ast (x_2,y_2,t_2) = \left(x_1+x_2, y_1+y_2, t_1 + t_2 + \frac{1}{2} \left( x_1 y_2 - x_2 y_1 \right) \right), \]
be the Heisenberg product. A line $\ell \subseteq \mathbb{R}^3$ is called horizontal if there exists $\theta \in [0, \pi)$ and $p \in \mathbb{R}^3$ such that 
\[ \ell = p \ast \mathbb{V}_{\theta}, \]
where $\mathbb{V}_{\theta} = \left\{ (\lambda e^{i \theta}, 0 ) :  \lambda \in \mathbb{R} \right\}$. A line $\ell \subseteq \mathbb{R}^3$ is called an $SL_2$ line if there exist $a,b,c,d \in \mathbb{R}$ with $ad-bc = 1$ such that
\[ \ell = \{(a,b,0) + \lambda (c,d,1) : \lambda \in \mathbb{R} \}, \]
or if there exists $t_0 \in \mathbb{R}$ and $(c,d) \in \mathbb{R}^2 \setminus \{0\}$ such that 
\[ \ell = \{(0,0,t_0) + \lambda (c,d,0) : \lambda \in \mathbb{R} \}. \]
Horizontal line segments and $SL_2$ line segments are line segments which are subsets of horizontal and $SL_2$ lines, respectively. A set $K \subseteq \mathbb{R}^3$ will be called a horizontal Besicovitch set if $K$ contains a horizontal unit line segment in every direction in $S^2 \setminus \{(0,0,\pm 1)\}$. Similarly, $K$ will be called an $SL_2$ Besicovitch set if $K$ contains an $SL_2$ unit line segment in every direction in $S^2 \setminus \{(0,0,\pm 1)\}$. If $F(x,y,z) = (x,y,2z)$, then $F$ sends horizontal lines to $SL_2$ lines, and $\ell \mapsto F(\ell)$ is a bijection between horizontal and $SL_2$ lines with inverse $\ell \mapsto F^{-1}(\ell)$, where $F^{-1}(x,y,z) = (x,y,z/2)$. For any horizontal line segment $I$, the length of $F(I)$ is comparable to the length of $I$, so constructing horizontal Besicovitch sets of measure zero is equivalent to constructing $SL_2$ Besicovitch sets of measure zero. 

In Section~\ref{horizontalbesicovitch} it is shown via an explicit construction that there exist closed horizontal Besicovitch sets of measure zero, using the same point-line duality as that used in \cite{liu}. This is then used to show the corollary that a uniform $L^p$ bound (i.e.~without some loss such as $\lvert \log \delta \rvert$) is not possible in the $SL_2$ Kakeya maximal inequality (for nontrivial $p$). In Section~\ref{planktrain}, some examples are given to show that the range of $p$ for which the (dual) $SL_2$ Kakeya maximal inequality can hold on $L^p$ (with $C_{\epsilon} \delta^{-\epsilon}$ loss) is at best $p \leq 3/2$, which is the conjectured range of $p$ for the standard Kakaya maximal function in $\mathbb{R}^3$. This is a counterexample to \cite[Remark~1]{katzwuzahl}, where it was suggested that $p=2$ might be possible in the $SL_2$ case. The version of the $SL_2$ Kakeya maximal inequality in \cite{katzwuzahl} appears slightly different to that used here, but the equivalence of the various forms is discussed at the end of the introduction. In Section~\ref{Lpbounds}, the following $SL_2$ Kakeya maximal inequality is proved.
\begin{theorem} \label{sl2maximal} Let $\mathbb{T}$ be a collection of $SL_2$ tubes in $\mathbb{R}^3$ of length 1 and radius $\delta$, satisfying the condition
\[ |\{ T \in \mathbb{T} : T \subseteq S \} | \leq (r/ \delta)^2, \]
for any tube $S$ of length 2 and radius $r$, and any $r \in [\delta, 1]$. Then for any $\epsilon >0$,
\begin{equation} \label{dualkakeya} \left\lVert \sum_{T \in \mathbb{T} } \chi_T \right\rVert_{L^{4/3}(B_3(0,1) ) }^{4/3} \leq C_{\epsilon} \delta^{-\epsilon} \left\lvert \mathbb{T}\right\rvert \delta^{2}. \end{equation} \end{theorem}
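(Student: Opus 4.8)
The plan is to deduce \eqref{dualkakeya} from an $L^{4/3}$ estimate for a suitable restricted family of orthogonal projections onto $2$-planes. The key geometric observation is that for a fixed direction $(c,d,1)$ every $SL_2$ line in that direction lies in the single plane $\{dx-cy=1\}$: if $(a,b,0)+\lambda(c,d,1)$ lies on such a line with $ad-bc=1$, then $d(a+\lambda c)-c(b+\lambda d)=ad-bc=1$. Hence all tubes of $\mathbb{T}$ whose directions lie in a common $\delta$-cap $\tau$ are contained in an $O(\delta)$-slab $S_\tau$ around a vertical plane $P_\tau$, and after restricting to $B_3(0,1)$ (inside which the relevant $(c,d)$ automatically satisfy $\lvert(c,d)\rvert\gtrsim 1$) and a routine dyadic rescaling one may assume these planes form a well-separated family. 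Writing $\mathbb{T}=\bigsqcup_\tau\mathbb{T}_\tau$ and $g_\tau=\sum_{T\in\mathbb{T}_\tau}\chi_T$, which is supported in $S_\tau$, one has $\sum_\tau\|g_\tau\|_{L^1}=\lvert\mathbb{T}\rvert\delta^2$, so \eqref{dualkakeya} amounts to $\|\sum_\tau g_\tau\|_{4/3}^{4/3}\lesssim_\epsilon\delta^{-\epsilon}\sum_\tau\|g_\tau\|_{L^1}$.

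I would split this into an ``internal'' and a ``superposition'' estimate. For the internal estimate, note that the hypothesis applied with $S$ of radius $\delta$ forces the tubes of $\mathbb{T}$, and in particular those of a fixed $\mathbb{T}_\tau$, to be $\delta$-separated; since $S_\tau$ is essentially two-dimensional and the tubes in it are within $\delta$ of being parallel, the classical C\'ordoba $L^2$ argument there gives $\|g_\tau\|_{L^2(\mathbb{R}^3)}^2\lesssim(\log\tfrac1\delta)\|g_\tau\|_{L^1}$, whence $\|g_\tau\|_{4/3}^{4/3}\lesssim(\log\tfrac1\delta)^{1/3}\|g_\tau\|_{L^1}$ by H\"older. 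Given this, it remains to prove the superposition estimate $\|\sum_\tau g_\tau\|_{4/3}^{4/3}\lesssim_\epsilon\delta^{-\epsilon}\sum_\tau\|g_\tau\|_{4/3}^{4/3}$. This is the heart of the matter: summing the triangle inequality, or applying H\"older to pass to $L^2$, loses a power of $\delta$ (the relevant $L^2$ norm can be as large as $\delta^{-1}$ times the right side of \eqref{dualkakeya}, exactly as for ordinary Kakeya in $\mathbb{R}^3$), so the exponent $4/3$ must be exploited without dropping to $L^2$.

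The restricted structure that makes the superposition estimate true is that the slabs through a given point $p=(x_0,y_0,z_0)$ are precisely those with $x_0 d-y_0 c\approx 1$, i.e.\ those whose direction parameter $(c,d)$ lies in a $\delta$-neighbourhood of a line --- equivalently, the directions of $SL_2$ lines through a point sweep out a great circle of $S^2$, a curve reflecting the underlying $SL_2$ ``light cone''. Thus, although $\{P_\tau\}$ is a two-parameter family of planes, only a one-parameter subfamily is seen at any point, and each $g_\tau$ is a $\delta$-plank that is essentially constant across its thin direction. Pairing $\sum_\tau g_\tau$ against a test function $h\in L^4(B_3(0,1))$ and using these two facts, I would reduce $\sum_\tau\langle g_\tau,h\rangle$ to a quantity controlled by an $L^{4/3}$ inequality for the projections onto the non-degenerate one-parameter family of $2$-planes attached to this fibration by great circles; the full strength of the hypothesis, for every $r\in[\delta,1]$, is what furnishes the two-dimensional (Wolff-type) ball condition that such a projection inequality needs in order to hold at the exponent $4/3$.

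The main obstacle is the $L^{4/3}$ restricted projection inequality itself: proving it at the sharp exponent while assuming only the combinatorial ball condition (rather than a Frostman or energy hypothesis) is the genuine analytic content, and I would expect it to require a high--low or polynomial-method argument tuned to the particular curve of planes that arises, together with a careful accounting of why the non-degeneracy present here yields $4/3$ (and, per the counterexamples, not more than $3/2$). The surrounding reductions --- localising to a non-vertical, dyadically rescaled set of directions, replacing honest $SL_2$ tubes by idealised planks, and summing over the $O(\log\tfrac1\delta)$ dyadic values of $r$ --- are routine, but must be arranged so that each contributes only a $\delta^{-\epsilon}$ (not a $\delta^{-c}$) loss.
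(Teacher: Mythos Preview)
Your overall strategy --- reduce the Kakeya sum to an $L^{4/3}$ inequality for restricted projections onto planes --- matches the paper's, but the reduction you propose is more circuitous and leaves its central step unjustified. The paper does not use the slab decomposition or an internal/superposition split at all. Instead it applies point-line duality (Lemma~\ref{pointlineduality} and Lemma~\ref{compmeasure}) directly: each tube $T$ is assigned the point $w_T\in\mathbb{R}^3$ with $\ell(w_T)$ equal to the centre line of $T$, one sets $\mu=\delta^{-1}\sum_{T}\chi_{B(w_T,C\delta)}$, and then $\bigl\|\sum_T\chi_T\bigr\|_{L^{4/3}(B(0,1))}^{4/3}\lesssim\int_0^{2\pi}\|\pi_{\theta\#}\mu\|_{4/3}^{4/3}\,d\theta$ follows in one line from the duality identity. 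Crucially, the tube ball condition becomes exactly the Frostman bound $c_2(\mu)\lesssim 1$ under this correspondence, so Theorem~\ref{projection43} (a projection inequality under a \emph{Frostman} hypothesis, not a combinatorial one) applies with $\alpha>2$, the $\delta^{-\epsilon}$ loss absorbing the gap between $c_2$ and $c_{2+\epsilon'}$.

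The gap in your proposal is the superposition step. Your observation that only a one-parameter family of slabs passes through any fixed point is correct and is morally the same geometric fact the paper exploits, but turning it into the inequality $\bigl\|\sum_\tau g_\tau\bigr\|_{4/3}^{4/3}\lesssim\delta^{-\epsilon}\sum_\tau\|g_\tau\|_{4/3}^{4/3}$ is precisely what point-line duality accomplishes: the duality map is the change of variables that converts ``one curve of slabs through each point'' into ``one curve of projections $\pi_\theta$''. Without writing that change of variables down, your reduction remains a heuristic, and the naive attempts you mention (H\"older, triangle inequality) lose $\delta^{-1/4}$ as you note. Incidentally, your internal estimate is easier than you suggest --- within a single $\delta$-cap the ball condition at $r\sim\delta$ already forces $\|g_\tau\|_\infty\lesssim 1$, so no C\'ordoba argument is needed --- and the paper's proof of the projection theorem itself uses a good--bad wave-packet decomposition with cone decoupling and a bootstrap (Lemma~\ref{energylemma}), rather than a polynomial method.
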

Theorem~\ref{sl2maximal} will be deduced from an $L^{4/3}$ inequality for restricted families of projections onto planes stated below, and the point-line duality principle from \cite{liu,fasslerorponen}.
	\begin{theorem} \label{projection43} If $\alpha >2$ then for any Borel measure $\mu$ on $B_3(0,1)$,
	\[ \int_0^{2\pi} \| \pi_{\theta \#} \mu \|_{L^{4/3}}^{4/3} \, d\theta \leq C_{\alpha} \mu(\mathbb{R}^3) c_{\alpha}(\mu)^{1/3}. \] \end{theorem}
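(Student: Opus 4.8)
\emph{Proof strategy.} The plan is to reduce the inequality to a $\delta$-discretised incidence statement and then to exploit the non-degeneracy of the one-parameter family $\{\pi_\theta\}$. Both sides are homogeneous under $\mu\mapsto t\mu$, and the right side decomposes well under splitting $\mu$ according to the dyadic scale at which the Frostman constant $c_\alpha(\mu)$ is attained, so I would first reduce to the model case in which $\mu$ is a normalised sum of $\sim\delta^{-\alpha}$ bumps at scale $\delta$, with the $\alpha$-dimensional ball condition holding with constant $\sim 1$ and $\mu(\mathbb R^3)\sim 1$; the target is then $\int_0^{2\pi}\|\pi_{\theta\#}\mu\|_{4/3}^{4/3}\,d\theta\lesssim_{\alpha,\epsilon}\delta^{-\epsilon}$. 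The vertical projection $\pi_\theta$ has fibres that are (twisted) horizontal lines in the direction $\mathbb V_\theta$, so the preimages $\pi_\theta^{-1}(Q)$ of $\delta$-squares $Q$ in the target plane $\mathbb W_\theta$ are precisely the $\delta$-neighbourhoods of horizontal unit segments; writing $N_\theta(Q)$ for the number of bumps meeting $\pi_\theta^{-1}(Q)$ and using that $\pi_{\theta\#}\mu$ has density $\sim\delta^{\alpha-2}N_\theta(Q)$ on $Q$, the goal becomes the incidence bound $\int_0^{2\pi}\sum_Q N_\theta(Q)^{4/3}\,d\theta\lesssim_{\epsilon}\delta^{\,2/3-4\alpha/3-\epsilon}$.

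The basic analytic input is a sublevel-set estimate reflecting the curvature of $\{\pi_\theta\}$: since $\pi_\theta$ is linear plus a quadratic twist in the horizontal variables, the curve $\theta\mapsto$(unit normal of $\mathbb W_\theta$) is non-degenerate — this is exactly the $SL_2$/Wolff hypothesis, and it is also what destroys the planar obstruction, as $\pi_\theta$ maps every $2$-plane onto a $2$-dimensional set. From the van der Corput lemma for curves with non-vanishing torsion I would obtain
\[ \bigl|\{\theta\in[0,2\pi):\ |\pi_\theta(x)-\pi_\theta(y)|\le\delta\}\bigr|\ \lesssim\ \min\bigl\{1,\ (\delta/|x-y|)^{1/2}\bigr\}, \]
with the stronger bound $\min\{1,\delta/|x-y|\}$ whenever $x-y$ lies at distance $\gtrsim\delta$ from a fixed exceptional ruled surface $\mathcal C$ through the origin — the cone of directions along which that curve is tangent to its osculating plane.

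The crux is an $L^{4/3}$, i.e.\ fractional-moment, incidence estimate, and the main obstacle is that it is not reducible to an $L^2$ bound: the second moment $\int_0^{2\pi}\|\pi_{\theta\#}\mu\|_2^2\,d\theta$ is not controlled by $\mu(\mathbb R^3)c_\alpha(\mu)$ — the directions in $\mathcal C$, where the sublevel sets are only square-root small, produce a power-of-$\delta$ obstruction — so squaring and interpolating against the trivial $L^1$ bound loses a power of $\delta$ and never reaches the target. Instead I would run a high–low / polynomial-partitioning decomposition together with an induction on the number of scales $\log(1/\delta)$: at an intermediate scale, split the incidences into a \emph{dispersed} part, where the bumps are spread over the target plane, which is estimated directly from the good sublevel bound and a bush/hairbrush count; and a \emph{concentrated} part, where the bumps cluster along $O(1)$ horizontal tubes (equivalently, $x-y$ sits near $\mathcal C$), which after rescaling is handled by the inductive hypothesis. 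The strict inequality $\alpha>2$ enters precisely here: the $\alpha$-dimensional ball condition bounds by $\lesssim\delta^{\alpha-2}$ the $\mu$-mass that can be packed into a single $\delta$-slab, and this power gain both closes the induction and forces the resulting geometric series in the number of scales to converge. Summing over the dyadic ranges of $|x-y|$ and over scales yields the bound with an arbitrarily small $\delta^{-\epsilon}$ loss.

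I expect the delicate point to be the concentrated part near $\mathcal C$: one must show that, on average in $\theta$, the images $\pi_\theta(\mu)$ genuinely disperse in the plane, not merely in the weak $L^2$ sense, and the square-root loss in the sublevel sets there must be paid for entirely out of the Frostman budget supplied by $\alpha>2$. Balancing these two is exactly what pins the exponent at $4/3$; extracting any $p\in(4/3,3/2]$ would need a sharper description of these near-degenerate, Besicovitch-type configurations, which is why that range remains open.
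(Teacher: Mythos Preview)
Your proposal is a strategy sketch rather than a proof, and the route it points toward is quite different from the paper's. The paper does not discretise to incidences, does not use polynomial partitioning, and does not invoke bush or hairbrush counts. Instead it works on the Fourier side: frequency space is decomposed into dyadic conical shells at distance $\sim 2^{j-k}$ from the light cone, each shell is covered by boxes $\tau\in\Lambda_{j,k}$ of dimensions $\sim 2^j\times 2^{j-k/2}\times 2^{j-k}$, and $\mu$ is wave-packet decomposed as $\sum_\tau\sum_T M_T\mu$ with $M_T\mu=\eta_T(\mu*\widecheck{\psi_\tau})$ over dual planks $T$. The good/bad split is $\mu=\mu_g+\mu_b$ where a plank $T$ is ``bad'' when $\mu(4T)$ exceeds a threshold $\sim 2^{-k(3/2-\beta)}2^{-(j-k)\alpha}c_\alpha(\mu)$. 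The bad part is controlled by a bootstrapping Lemma~\ref{energylemma} (an induction that gradually lowers the density parameter $\beta$, closed by rescaling bad planks to unit scale and reapplying the lemma), and the good part by H\"older interpolation between the trivial $L^1$ bound and an $L^2$ bound. The point you did not anticipate is that the $L^2$ bound \emph{does} close for $\mu_g$: after Plancherel in the plane, a change of variables onto the conical shell, and Plancherel in $\mathbb{R}^3$, the $\tau$'s decouple, and one then applies the $L^{p_{\dec}}$ cone decoupling theorem with $p_{\dec}=4$. That choice is exactly what lands the final exponent at $4/3$.

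Your observation that the raw bound $\int\|\pi_{\theta\#}\mu\|_2^2\,d\theta\lesssim\mu(\mathbb R^3)c_\alpha(\mu)$ fails for $2<\alpha<5/2$ is correct --- the paper establishes precisely this in Proposition~\ref{wavepacket} via a single Knapp plank --- but the remedy is not to abandon $L^2$; it is to excise the heavy planks first and run $L^2$ on the remainder. The substitutes you propose do not visibly fill the gap: the van der Corput sublevel bound you wrote is an $L^2$-type pair-counting estimate, and bush/hairbrush arguments are likewise $L^2$-flavoured, so without a replacement for decoupling it is unclear how your ``dispersed'' part would ever see $4/3$ rather than reproduce the very $L^2$ obstruction you flagged. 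Your ``concentrated near $\mathcal C$'' picture is morally the heavy-plank set, and your induction-on-scales/rescale intuition does match the structure of Lemma~\ref{energylemma}; what is missing is the concrete mechanism --- here, $L^4$ cone decoupling --- that makes the good/dispersed estimate close at $p=4/3$. (A minor correction: the fibres of $\pi_\theta$ are straight Euclidean light rays in direction $\gamma(\theta)$, not twisted horizontal lines; the connection to horizontal lines arises only through point--line duality, which the paper uses in the reverse direction to deduce Theorem~\ref{sl2maximal} from Theorem~\ref{projection43}.)
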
 
Here $\pi_{\theta}$ is the orthogonal projection onto $\frac{1}{\sqrt{2}}(\cos \theta, \sin \theta, 1)^{\perp}$, and 
\[ c_{\alpha}(\mu)=\sup_{r>0, x \in \mathbb{R}^3} \frac{ \mu(B(x,r) ) }{r^{\alpha} }. \]
As a counterpart to Theorem~\ref{projection43}, the following version for projections $\rho_{\theta}$ onto the span of $\frac{1}{\sqrt{2}} \left( \cos \theta, \sin \theta, 1 \right)$ will be proved using a similar method.
\begin{theorem} \label{projection44} If $\alpha >1$ then for any Borel measure $\mu$ on $B_3(0,1)$,
	\[ \int_0^{2\pi} \| \rho_{\theta \#} \mu \|_{L^{6/5}}^{6/5} \, d\theta \leq C_{\alpha} \mu(\mathbb{R}^3) c_{\alpha}(\mu)^{1/5}. \] \end{theorem}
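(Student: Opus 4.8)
The plan is to dualise the left-hand side and reduce to an $L^p\to L^p$ bound for a generalised X-ray transform. Assuming (which is permissible, since for $\alpha>1$ the projections $\rho_{\theta\#}\mu$ are absolutely continuous for a.e.\ $\theta$, and in any case the inequality below produces the density) that $\rho_{\theta\#}\mu$ has a density $F(\theta,\cdot)$, one has $\int_0^{2\pi}\|\rho_{\theta\#}\mu\|_{L^{6/5}}^{6/5}\,d\theta=\|F\|_{L^{6/5}([0,2\pi]\times\mathbb{R})}^{6/5}$. Pairing against $G\in L^6([0,2\pi]\times\mathbb{R})$, $G\ge0$, and unwinding the pushforward gives $\iint FG=\int_{\mathbb{R}^3}(RG)\,d\mu$, where
\[
RG(z)=\int_0^{2\pi}G\bigl(\theta,\langle z,v_\theta\rangle\bigr)\,d\theta,\qquad v_\theta=\tfrac{1}{\sqrt2}(\cos\theta,\sin\theta,1),
\]
integrates $G$ along the sinusoidal curves $\theta\mapsto(\theta,\langle z,v_\theta\rangle)$. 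By Hölder, $\int(RG)\,d\mu\le\mu(\mathbb{R}^3)^{5/6}\|RG\|_{L^6(\mu)}$, so it suffices to prove $\|RG\|_{L^6(\mu)}\le C_\alpha\,c_\alpha(\mu)^{1/6}\,\|G\|_{L^6([0,2\pi]\times\mathbb{R})}$ for $\alpha>1$; feeding this back and optimising returns $\int_0^{2\pi}\|\rho_{\theta\#}\mu\|_{6/5}^{6/5}\,d\theta\le C_\alpha\,\mu(\mathbb{R}^3)\,c_\alpha(\mu)^{1/5}$. Theorem~\ref{projection43} fits the same template, with $6,\ 6/5$ replaced by $4,\ 4/3$, the slices $\langle z,v_\theta\rangle$ by the plane projections $\pi_\theta z$, and $\alpha>1$ by $\alpha>2$.

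For the operator estimate I would run a $TT^\ast$-type argument, raising to the $6$th power (an even, hence convenient, exponent; likewise $4$ for Theorem~\ref{projection43}):
\[
\|RG\|_{L^6(\mu)}^6=\int_{[0,2\pi]^6}\int_{\mathbb{R}^3}\prod_{i=1}^6 G\bigl(\theta_i,\langle z,v_{\theta_i}\rangle\bigr)\,d\mu(z)\,d\theta_1\cdots d\theta_6.
\]
The geometric input is that $v_{\theta_1},v_{\theta_2},v_{\theta_3}$ are linearly independent whenever $\theta_1,\theta_2,\theta_3$ are distinct: their determinant equals, up to a constant, the area of the triangle inscribed in the unit circle at those angles, which is comparable to the product of the two smallest of the three pairwise gaps and never vanishes since a line meets a circle in at most two points. (For Theorem~\ref{projection43} the analogue is that $z\mapsto(\pi_{\theta_1}z,\pi_{\theta_2}z)$ is injective for $\theta_1\ne\theta_2$, so two plane projections already determine $z$.) After ordering the $\theta_i$ and grouping $\{1,\dots,6\}$ into two interleaved triples $A,B$, Cauchy--Schwarz in $z$ bounds the inner integral by the geometric mean of $\int_{\mathbb{R}^3}\prod_{i\in A}G(\theta_i,\langle z,v_{\theta_i}\rangle)^2\,d\mu(z)$ and its analogue over $B$; in each one changes variables $u=(\langle z,v_{\theta_i}\rangle)_{i\in A}$, whose Jacobian factor is the reciprocal of $|\det(v_{\theta_i})_{i\in A}|$. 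This should yield a bound $c_\alpha(\mu)\,W(\theta_1,\dots,\theta_6)\,\prod_i\|G(\theta_i,\cdot)\|_{L^6}$ with $W$ assembled from negative half-powers of angular gaps.

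The remaining $\theta$-integration then closes the argument: $W$ has only locally integrable (square-root) singularities along the diagonals $\theta_i=\theta_j$, and a weighted Young / Loomis--Whitney inequality gives $\int_{[0,2\pi]^6}W\prod_i\|G(\theta_i,\cdot)\|_{L^6}\,d\theta\lesssim\prod_i\bigl(\int_0^{2\pi}\|G(\theta,\cdot)\|_{L^6}^6\,d\theta\bigr)^{1/6}=\|G\|_{L^6}^6$, once one checks that each $\theta_i$ carries total weight at most $1/2$, leaving room for the six $L^6$ factors. Here the hypothesis $\alpha>1$ is really used in the change-of-variables step, not in the $\theta$-integration: it is precisely the statement that $\mu$ is too spread out to concentrate on a line, and the transform $R$ only ``sees'' $\mu$ through such line concentrations (the analogous role of $\alpha>2$ for Theorem~\ref{projection43} being that $\mu$ cannot concentrate on a plane).

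The step I expect to be the crux is the change-of-variables estimate against the rough measure $\mu$. Since $\mu$ need not be absolutely continuous one cannot simply invoke an $L^p$ density bound, and $|\det(v_{\theta_i})_{i\in A}|$ collapses exactly when the relevant $\theta_i$ cluster --- the degenerate regime of the light-cone geometry behind the problem. I would handle this by a preliminary reduction to a single scale: decompose $\mu$ according to the dyadic annuli around a running point (equivalently, mollify $\mu$ at scale $\delta$ and sum in $\delta$), apply the ball condition there, and sum the geometric series, which converges exactly because $\alpha>1$. The delicate point --- presumably where most of the work in Section~\ref{Lpbounds} lies --- is making this passage from the mollified to the rough measure lossless, so that no $\delta^{-\epsilon}$ survives; the ball condition must be used sharply together with the rotational curvature of the family of sinusoids (equivalently, the non-degeneracy of $\theta\mapsto v_\theta$ on the cone), rather than through a wasteful covering of cone neighbourhoods by balls.
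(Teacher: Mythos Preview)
Your dualisation and the reduction to the operator bound $\|RG\|_{L^6(\mu)}\lesssim c_\alpha(\mu)^{1/6}\|G\|_{L^6}$ are fine, and this is a genuinely different route from the paper's. But the heart of your argument---the claim that after Cauchy--Schwarz into triples and a change of variables one obtains a pointwise-in-$\theta$ bound of the form $c_\alpha(\mu)\,W(\theta)\,\prod_i\|G(\theta_i,\cdot)\|_{L^6}$ with $W$ depending only on the angular configuration---is false in the range $1<\alpha<2$, and no amount of mollification repairs it, since the counterexample is already smooth.

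Here is the obstruction to the trilinear step. Fix three well-separated angles $\theta_1,\theta_3,\theta_5$ (so that your weight $W_A$ is a fixed finite constant), and take $G(\theta_1,\cdot)=G(\theta_3,\cdot)=\chi_{[-1,1]}$ but $G(\theta_5,\cdot)=\chi_{[0,\delta]}$. Since $\mu$ is supported in $B_3(0,1)$, the first two constraints are vacuous, and your trilinear form $\int\prod_{i\in A}G(\theta_i,\langle z,v_{\theta_i}\rangle)^2\,d\mu(z)$ reduces to the $\mu$-mass of the $\delta$-slab $\{z:\langle z,v_{\theta_5}\rangle\in[0,\delta]\}$. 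Now let $\mu$ be normalised Lebesgue measure on that very slab intersected with $B_3(0,1)$: for any $1<\alpha<2$ one has $c_\alpha(\mu)\sim 1$ (a ball of radius $r\ge\delta$ meets the slab in volume $\sim\delta r^2$, so $\mu(B_r)\sim r^2\le r^\alpha$). Then the left side equals $1$, while $c_\alpha(\mu)\prod_{i\in A}\|G(\theta_i,\cdot)\|_{L^6}^2\sim\delta^{1/3}$. No $\theta$-only weight can absorb the factor $\delta^{-1/3}$. The point is structural: a genuine change of variables against Lebesgue would output $L^2$ norms of the $G_i$, not $L^6$, and promoting $L^2$ to $L^6$ by H\"older on $[-1,1]$ is precisely what fails when only one slab constraint in the triple is active---and for $\alpha<2$ a Frostman measure is allowed to sit on a single slab.

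The paper's proof is Fourier-analytic and sidesteps this entirely. It decomposes $\mu$ into wave packets $M_T\mu$ adapted to planks dual to a conical frequency partition, splits $\mu=\mu_g+\mu_b$ according to whether the plank mass $\mu(2T)$ exceeds a threshold, and runs a bootstrap (Lemma~\ref{energylemma2}) that recursively controls the bad part on small unions of intervals. The good part is bounded in $L^2$ using the sharp $\ell^{p_{\mathrm{dec}}}$ decoupling inequality for the cone with $p_{\mathrm{dec}}=6$; the final $L^{6/5}$ estimate is then obtained by H\"older-interpolating this $L^2$ bound against the trivial $L^1$ bound. The Bourgain--Demeter decoupling input is essential and strictly deeper than any multilinear change of variables---it is exactly what pushes the argument down to all $\alpha>1$ rather than stalling near $\alpha=2$.
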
 
	
	The proofs of Theorem~\ref{projection43} and Theorem~\ref{projection44} here can easily be generalised to cover $C^2$ curves $\gamma: [0,1] \to S^2$ with $\det(\gamma, \gamma', \gamma'')$ nonvanishing, but for simplicity they are only written here for the model curve $\gamma(\theta) = \frac{1}{\sqrt{2}} (\cos \theta, \sin \theta, 1)$. 

A different but similar looking maximal function to the $SL_2$ Kakeya maximal function was considered in \cite{fasslerpinamontiwald}, where the optimal $L^{3/2}$ inequality was obtained. This maximal function was a function of the angle $\theta \in [0,\pi)$ of the tube, rather than the direction. The tubes considered were also ``Heisenberg tubes'' rather than Euclidean tubes, and the problem was related to the projections $\rho_{\theta}$ rather than $\pi_{\theta}$. 

By a recent result of Mattila \cite[Theorem~1.2]{mattilaLp}, a consequence of Theorem~\ref{projection43} and Theorem~\ref{projection44} is the following result about generic intersections of sets in $\mathbb{R}^3$ with ``light rays'' and ``light planes''.
\begin{theorem} \label{intersectiontheorem} Let $A \subseteq \mathbb{R}^3$ be $\mathcal{H}^s$-measurable with $0 < \mathcal{H}^s(A) < \infty$. If $s >1$ then for $\left(\mathcal{H}^s \times \mathcal{H}^1\right)$-a.e.~$(x, \theta) \in A \times [0, 2\pi)$, 
\[ \dim\left( A \cap \rho_{\theta}^{-1}( \rho_{\theta}(x) ) \right) = s-1, \]
and for a.e.~$\theta \in [0, 2\pi)$, 
\[ \mathcal{H}^1\left\{ w \in \spn(\gamma(\theta) ) :  \dim\left( A \cap \rho_{\theta}^{-1}(w) \right) = s-1 \right\} > 0. \]
If $s >2$ then for $\left(\mathcal{H}^s \times \mathcal{H}^1\right)$-a.e.~$(x, \theta) \in A \times [0, 2\pi)$, 
\[ \dim\left( A \cap \pi_{\theta}^{-1}( \pi_{\theta}(x) ) \right) = s-2, \]
and for a.e.~$\theta \in [0, 2\pi)$, 
\[ \mathcal{H}^2\left\{ w \in \gamma(\theta)^{\perp} :  \dim\left( A \cap \pi_{\theta}^{-1}(w) \right) = s-2 \right\} > 0. \]\end{theorem}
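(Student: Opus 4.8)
The plan is to feed the $L^{4/3}$ and $L^{6/5}$ bounds of Theorem~\ref{projection43} and Theorem~\ref{projection44} into the abstract slicing result of Mattila \cite[Theorem~1.2]{mattilaLp}, which is designed precisely to convert an $L^q$ bound for a one‑parameter family of projections into sharp statements about the dimensions of the associated slices. So essentially all of the mathematical content lives in Theorems~\ref{projection43} and \ref{projection44}; what remains is to arrange the hypotheses and unwind the conclusions.

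First I would pass to a compactly supported Frostman measure. By inner regularity of $\mathcal{H}^s|_A$ together with the density theorem for Hausdorff measures, there is a compact set $E \subseteq A$ with $\mathcal{H}^s(E)>0$ and a constant $C$ so that $\mathcal{H}^s(E \cap B(x,r)) \le C r^s$ for all $x \in \mathbb{R}^3$ and $r>0$. Since $\rho_\theta$ and $\pi_\theta$ are linear in the space variable, translating or dilating $A$ merely translates or dilates the fibres $\rho_\theta^{-1}(\rho_\theta(x))$ and $\pi_\theta^{-1}(\pi_\theta(x))$ while leaving $\theta$ and all Hausdorff dimensions unchanged, so I may assume $E \subseteq B_3(0,1)$. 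Set $\mu = \mathcal{H}^s|_E$, so $\mu(\mathbb{R}^3) = \mathcal{H}^s(E) \in (0,\infty)$, and since $\mu$ is supported in the unit ball the Frostman bound gives $c_\alpha(\mu)<\infty$ for every $\alpha \le s$; in particular for some $\alpha>1$ when $s>1$, and for some $\alpha>2$ when $s>2$.

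Next I would apply the projection estimates. When $s>1$, Theorem~\ref{projection44} gives $\int_0^{2\pi} \|\rho_{\theta\#}\mu\|_{L^{6/5}}^{6/5}\,d\theta < \infty$, so for a.e.\ $\theta$ the measure $\rho_{\theta\#}\mu$ is absolutely continuous with density in $L^{6/5}(\spn(\gamma(\theta)))$; when $s>2$, Theorem~\ref{projection43} shows in the same way that $\pi_{\theta\#}\mu$ has density in $L^{4/3}(\gamma(\theta)^{\perp})$ for a.e.\ $\theta$. These are exactly the inputs required by \cite[Theorem~1.2]{mattilaLp}: the parametrising curve $\gamma(\theta) = \frac{1}{\sqrt{2}}(\cos\theta,\sin\theta,1)$ is a $C^2$ curve on $S^2$ with $\det(\gamma,\gamma',\gamma'')$ nonvanishing, so the families $\{\rho_\theta\}$ and $\{\pi_\theta\}$ are admissible there, and the $L^q$ bounds with the energy exponents $1/5$ and $1/3$ appearing in Theorems~\ref{projection44} and \ref{projection43} supply the quantitative hypothesis. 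Mattila's theorem then produces, for $\mu$-a.e.\ $x$ and a.e.\ $\theta$, the equality $\dim(E \cap \rho_\theta^{-1}(\rho_\theta(x))) = s-1$ (resp.\ $\dim(E \cap \pi_\theta^{-1}(\pi_\theta(x))) = s-2$), and, for a.e.\ $\theta$, that the set of $w$ in the image space with slice dimension $s-1$ (resp.\ $s-2$) has positive $\mathcal{H}^1$ (resp.\ $\mathcal{H}^2$) measure --- the last point using that $\rho_{\theta\#}\mu$ (resp.\ $\pi_{\theta\#}\mu$) is a nonzero $L^q$ function. Exhausting $A$ up to an $\mathcal{H}^s$-null set by countably many such compact sets and taking the union replaces $E$ by $A$: the slices only grow, so the sharp lower bound persists $\mathcal{H}^s$-a.e.\ on $A$, while the matching upper bound $\dim \le s-1$ (resp.\ $\le s-2$) for $\mathcal{H}^1$-a.e.\ (resp.\ $\mathcal{H}^2$-a.e.) fibre follows from the Eilenberg inequality for $\rho_\theta$, $\pi_\theta$ together with $\mathcal{H}^{s+\epsilon}(A)=0$, and passes to the $(\mathcal{H}^s \times \mathcal{H}^1)$-a.e.\ statement on $A \times [0,2\pi)$ through the absolute continuity of the pushforwards and Fubini.

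The only genuinely new ingredient is the sharp lower bound on slice dimension, which is precisely what Theorems~\ref{projection43} and \ref{projection44} contribute; the upper bound and the absolute‑continuity reductions are classical. Consequently the one place that needs care is essentially bookkeeping: checking that the $L^{4/3}$ and $L^{6/5}$ estimates, with their energy exponents, fall in the range of hypotheses admitted by \cite[Theorem~1.2]{mattilaLp}, and that the model curve $\gamma$ meets its non‑degeneracy requirement. Once that dictionary between hypotheses is verified the deduction is routine, which is why the statement is presented here as a corollary rather than with a self‑contained argument.
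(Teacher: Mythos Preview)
Your proposal is correct and follows exactly the route the paper indicates: the paper states Theorem~\ref{intersectiontheorem} as a direct consequence of feeding the $L^{4/3}$ and $L^{6/5}$ bounds from Theorem~\ref{projection43} and Theorem~\ref{projection44} into \cite[Theorem~1.2]{mattilaLp}, without writing out a separate proof. Your sketch is in fact more detailed than what the paper provides, but the strategy is identical.
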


Theorem~\ref{intersectiontheorem} is a refinement of the Marstrand-Mattila slicing theorem in $\mathbb{R}^3$ (see Theorem~10.10 and Theorem~10.11 in \cite{mattilageom}), which has the same statement as the above, but with lines and planes varying over all lines and planes through a point, rather than just ``light rays'' and ``light planes''. By Fubini's theorem and a simple scaling argument, Theorem~\ref{intersectiontheorem} implies the standard Marstrand-Mattila slicing theorem in $\mathbb{R}^3$. 

\begin{sloppypar} Theorem~\ref{intersectiontheorem} also implies that if $A \subseteq \mathbb{R}^3$ is Borel with $\dim A >1$, then $\mathcal{H}^1( \rho_{\theta} (A) ) >0$ for a.e.~$\theta \in [0, 2\pi)$, which was first proved in \cite{lengthprojections}. Similarly, Theorem~\ref{intersectiontheorem} implies that if $A \subseteq \mathbb{R}^3$ is Borel with $\dim A >2$, then $\mathcal{H}^2( \pi_{\theta} (A) ) >0$ for a.e.~$\theta \in [0, 2\pi)$, which was first proved in \cite{GGGHMW}. These length and area statements in turn imply the Hausdorff dimension versions $\dim( \rho_{\theta}(A)) = \min\{1, \dim A\}$ and $\dim( \pi_{\theta}(A)) = \min\{2, \dim A\}$ for a.e.~$\theta \in [0, 2\pi)$; Fässler and Orponen proved that for general families of Euclidean projections, the dimension exponents in projection theorems can always be lowered by randomly adding points to lower dimensional sets to obtain higher dimensional sets (see e.g.~\cite{fasslerorponen} for this type of argument), though this method probably loses information about the exceptional set. The a.e.~equality $\dim( \rho_{\theta}(A)) =\min\{1, \dim A\}$ was first obtained by Käenmäki, Orponen and Venieri \cite{KOV}, and then by Pramanik, Yang and Zahl \cite{PYZ} and independently by Gan, Guth and Maldague \cite{GGM}, both for more general curves (and all of \cite{KOV,PYZ,GGM} contain better information about the exceptional set than what follows from Theorem~\ref{intersectiontheorem}). The Hausdorff dimension theorem for the projections $\pi_{\theta}$ was first obtained in \cite{GGGHMW} (and later in \cite{katzwuzahl}). The a.e.~equalities $\dim( \rho_{\theta}(A)) = \min\{1, \dim A\}$ and $\dim( \pi_{\theta}(A)) = \min\{2, \dim A\}$ can also be obtained as corollaries of Theorem~\ref{projection44} and Theorem~\ref{projection43} directly (by Hölder's inequality and the definition of Hausdorff dimension). \end{sloppypar}

The $SL_2$ example related to the Kakeya problem first appeared in Katz and Zahl's $5/2+\epsilon$ lower bound for the Hausdorff dimension of Besicovitch sets in $\mathbb{R}^3$ \cite{KZ}, where the proof needed to negotiate a hypothetical $SL_2$ ``almost-counterexample''. Later, Wang and Zahl conjectured that $SL_2$ Besicovitch sets in $\mathbb{R}^3$ have Hausdorff dimension 3 (as a special case of the Kakeya conjecture in $\mathbb{R}^3$) \cite{WZ}. This was proved by Fässler and Orponen \cite{fasslerorponen2}, and also by Katz, Wu, and Zahl \cite{katzwuzahl}. By a standard argument (see e.g.~\cite[Proposition~10.2]{wolffbook}), Theorem~\ref{sl2maximal} also implies that $SL_2$ Besicovitch sets have Hausdorff dimension 3 (yielding a third proof of this). A type of $SL_2$ Kakeya maximal inequality first appeared in \cite{katzwuzahl}. The inequality there implies that $SL_2$ Besicovitch sets have Hausdorff dimension 3, but does not imply the $SL_2$ maximal inequality in \eqref{dualkakeya} for any $p>1$ (in place of $4/3$). 

Finally, see \cite[Sections~6.7 and 6.8]{mattilasurvey} for a general discussion of the connections between restricted projection problems and Kakeya-type problems. 

\subsection*{Different forms of the \texorpdfstring{$SL_2$}{SL2} Kakeya maximal inequality} \begin{sloppypar} Given a Borel function $f$ on $\mathbb{R}^3$, let $f_{\delta, SL_2}^{*}: S^2 \setminus \{(0,0, \pm 1) \}$ be the $SL_2$ Kakeya maximal function at scale $\delta$, which is defined as for the standard Kakeya maximal function except that the centre lines $\ell(T)$ of the $\delta \times \delta \times 1$-tubes $T$ are required to be $SL_2$ lines:
\[ f_{\delta,SL_2}^*(e) = \sup_{\substack{\dir(T) = e \\
T = \text{$\delta \times \delta \times 1$-tube}\\
\ell(T) \in SL_2}} \frac{1}{m(T)} \int_T |f|, \]
where $m(T)$ is the Lebesgue measure of $T$. The terms ``$SL_2$'' and ``horizontal'' will be used interchangeably, though strictly speaking they are slightly different. \end{sloppypar}

 If $p^*  \in (1,\infty)$ is fixed, then the inequality 
\begin{equation} \label{sl2kakeya} \|f_{\delta, SL_2}^{*}\|_{L^{p^*}(S^2)} \leq C_{\epsilon} \delta^{-\epsilon} \|f\|_{p^*} \end{equation}
is equivalent (by the standard duality argument; see e.g.~\cite[Lemma~10.4]{wolffbook}) to the inequality
\begin{equation} \label{strongmaximal} \left\lVert  \sum_{T \in \mathbb{T} } \chi_T \right\rVert_{p} ^p \leq C_{\epsilon} \delta^{-\epsilon} |\mathbb{T}| \delta^2 \end{equation}
whenever $\mathbb{T}$ is a family of $1 \times \delta \times \delta$ $SL_2$ tubes with $\delta$-separated directions. It is known that the inequality \eqref{strongmaximal} is equivalent to
\begin{equation} \label{shadingversion} \mathcal{H}^3\left( \bigcup_{T \in \mathbb{T} } Y(T) \right) \geq C_{\epsilon}^{-1} \delta^{\epsilon} \lambda^{p^*} \delta^{2} |\mathbb{T}|, \end{equation}
whenever $\mathbb{T}$ is a family of $1 \times \delta \times \delta$ $SL_2$ tubes with $\delta$-separated directions, $\delta \leq \lambda \leq 1$, and $Y$ is a $\lambda$-shading of $\mathbb{T}$ (i.e.~a union of $\delta$-cubes such that at least a fraction $\lambda$ of each tube in $\mathbb{T}$ is covered by the cubes in $Y$, meaning that $\mathcal{H}^3(Y(T)) \geq \lambda \mathcal{H}^3(T)$ for any $T \in \mathbb{T}$, where $Y(T) = Y \cap T$). The proof is identical to that for the standard Kakeya maximal function, but will be summarised here for readability. That \eqref{strongmaximal} implies \eqref{shadingversion} follows from Hölder's inequality. The reverse implication follows by pigeonholing a set $Y$ of $\delta$-cubes on which the value of $\sum_{T \in \mathbb{T} } \chi_T$ is approximately constant in order to reverse Hölder's inequality, then by pigeonholing again to a subset $\mathbb{T}' \subseteq \mathbb{T}$ (of potentially smaller cardinality) of tubes containing an approximately equal fraction $\lambda$ of the cubes from $Y$ in each tube, and then applying \eqref{shadingversion} to this subset. 

An a priori weaker inequality than \eqref{strongmaximal} is
\begin{equation} \label{weakmaximal} \left\lVert  \sum_{T \in \mathbb{T} } \chi_T \right\rVert_{L^p(B_3(0,1) )} ^p \leq C_{\epsilon} \delta^{-\epsilon}, \end{equation}
whenever $\mathbb{T}$ is a family of $1 \times \delta \times \delta$ $SL_2$ tubes with $\delta$-separated directions. Clearly \eqref{strongmaximal} implies \eqref{weakmaximal}, and the implication can be reversed for the standard Kakeya maximal function by rotational averaging (see e.g.~\cite[Proposition~22.7]{mattila}), but rotations in $O(3)$ do not necessarily preserve horizontal lines, so it does not seem obvious that \eqref{weakmaximal} implies \eqref{strongmaximal} in the $SL_2$ case. In Section~\ref{planktrain} it is shown that \eqref{weakmaximal} cannot hold in the $SL_2$ case unless $p \leq 3/2$ (and therefore neither can \eqref{shadingversion}, \eqref{strongmaximal} or \eqref{sl2kakeya}). 

\section*{Acknowledgements} The ``microlocal interpolation'' argument from \cite[Exercise~9.21]{demeter} used in Section~\ref{Lpbounds} was explained to me in a different context by Alex Barron. A possible connection between the $L^p$ $SL_2$ Kakeya maximal inequality and $L^p$ bounds for restricted families of projections was suggested to me by Josh Zahl.

This work was partially supported by funding from the American Institute of Mathematics (AIM).

\section{\texorpdfstring{$SL_2$}{SL2} Besicovitch sets of measure zero}

\label{horizontalbesicovitch} 

Every horizontal line can be written in the form $p \ast \mathbb{V}_{\theta}$ with $(p_1,p_2)$ in the span of $i e^{i \theta}$, where $p = (p_1,p_2,p_3)$. If $R>0$ and if such a horizontal line intersects $B(0,R)$, then $\lvert (p_1, p_2) \rvert \leq R$ and the direction of $p \ast \mathbb{V}_{\theta}$ has distance at least $2/R$ from $(0,0, \pm 1)$. It follows that a bounded set cannot contain a horizontal unit line segment in every direction in $S^2 \setminus \{0,0, \pm 1\}$ (and similarly for $SL_2$ line segments). For this reason, the compact sets of horizontal and $SL_2$ line segments in Theorem~\ref{besicovitchhorizontal} below contain only unit line segments in every direction in $S^2  \setminus \mathcal{N}_{\epsilon}(\{(0,0,\pm 1)\})$ for a given $\epsilon >0$, and to get all directions in $S^2  \setminus \{(0,0,\pm 1)\}$ it is necessary to replace ``compact'' with ``closed''. 

The following (straightforward) lemma is used to make the proof of Theorem~\ref{besicovitchhorizontal} below constructive, though it is possible to give a non-constructive proof of Theorem~\ref{besicovitchhorizontal} below by substituting the application of Lemma~\ref{lemmaconstr} in the proof of Theorem~\ref{besicovitchhorizontal} with an averaging/Fubini/scaling argument. 

\begin{sloppypar} \begin{lemma} \label{lemmaconstr} If $E \subseteq (-\pi/2, \pi/2)$ is Lebesgue measurable with $\mathcal{H}^1(E) = 0$, and if $F$ is the union of great circles defined by
\[ F = \{ ( \lambda e^{i \theta}, \mu) \in S^2 : (\lambda, \mu) \in S^1, \quad \theta \in E \}, \]
 then $\mathcal{H}^1(F \cap C) = 0$ for any circle $C \subseteq S^2$ with the property that either $(0,0,1) \notin C$ or $(0,0,-1) \notin C$. 
\end{lemma}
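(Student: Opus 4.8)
The plan is to reduce the statement to a one‑variable claim about a real‑analytic function on the circle $C$. Away from the great circle $\{x_1 = 0\}$, the set $F$ coincides with $\{x \in S^2 : x_1 \neq 0, \ x_2/x_1 \in \tan E\}$: a point $(\lambda e^{i\theta}, \mu) \in S^2$ with $\theta \in (-\pi/2,\pi/2)$ has first coordinate $\lambda\cos\theta$, which is nonzero exactly when $\lambda \neq 0$, and then $x_2/x_1 = \tan\theta$; conversely if $x_1 \neq 0$ and $x_2/x_1 = \tan\theta_0$ with $\theta_0 \in E$, then $(x_1,x_2) = (x_1/\cos\theta_0)(\cos\theta_0,\sin\theta_0)$, and $(x_1/\cos\theta_0)^2 + x_3^2 = x_1^2+x_2^2+x_3^2 = 1$, so $x \in F$. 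Since $\tan$ is a locally Lipschitz diffeomorphism of $(-\pi/2,\pi/2)$ onto $\mathbb{R}$, the hypothesis $\mathcal{H}^1(E)=0$ gives $\mathcal{H}^1(\tan E)=0$. On the slice $\{x_1=0\}$ one has $F \cap \{x_1=0\} = \{(0,0,\pm 1)\}$, because the only $\theta$ with $\cos\theta = 0$ are $\pm\pi/2 \notin E$; so this slice contributes at most two points to $F \cap C$. Hence it suffices to show that $(\phi|_C)^{-1}(\tan E)$ is $\mathcal{H}^1$‑null, where $\phi(x) = x_2/x_1$, defined on $C \cap \{x_1 \neq 0\}$.

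Next I would use real‑analyticity. Unless $C = \{x_1=0\}\cap S^2$ (which contains both poles and is excluded), $C\cap\{x_1\neq 0\}$ is $C$ with at most two points removed, a $1$‑dimensional real‑analytic manifold on which $\phi$ is real‑analytic. The key claim is that $\phi|_C$ is constant on no sub‑arc of $C$: if it were constant $=c$ on an arc, that arc would lie in the plane $\{x_2 = cx_1\}$ through the origin, but an arc of a circle determines the circle (three distinct points on a circle are non‑collinear, hence determine the plane of the circle), so $C$ would equal the great circle $\{x_2=cx_1\}\cap S^2$, which passes through both poles $(0,0,\pm1)$ — contradicting the hypothesis. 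Consequently $\phi|_C$ is real‑analytic and nowhere locally constant on $C\cap\{x_1\neq 0\}$, so, with respect to an arc‑length parameter, its derivative is not identically zero on any arc and thus has only isolated zeros there; the critical set of $\phi|_C$ is therefore countable and $\mathcal{H}^1$‑null.

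Finally I would close with a covering argument: near every non‑critical point, $\phi|_C$ is a local diffeomorphism onto an open interval, hence locally bi‑Lipschitz, and a bi‑Lipschitz preimage of an $\mathcal{H}^1$‑null set is $\mathcal{H}^1$‑null; covering $C\cap\{x_1\neq 0\}$ minus its (null) critical set by countably many such arcs shows $(\phi|_C)^{-1}(\tan E)$ is $\mathcal{H}^1$‑null, and adding back the critical set together with the $\leq 2$ points where $x_1 = 0$ yields $\mathcal{H}^1(F\cap C) = 0$. The one genuinely delicate point is the dichotomy used in the second paragraph — $\phi|_C$ is either constant on an arc, in which case $C$ is a great circle through both poles, or else its critical set is discrete — and this is exactly where the hypothesis on $C$ enters; it is sharp, since if $C$ equals a great circle $\{(\lambda e^{i\theta_0},\mu):(\lambda,\mu)\in S^1\}$ with $\theta_0 \in E$, then $F\cap C = C$ has positive length. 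Everything else (analyticity of $\phi$, that an arc determines its circle, and the bi‑Lipschitz covering) is routine.
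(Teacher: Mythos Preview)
Your proof is correct and shares the core idea with the paper's --- show that the ``angle'' map on $C$ has only isolated critical points, so that arc-length on $C$ pushes forward absolutely continuously and preimages of null sets are null --- but the execution differs. The paper first stereographically projects to $\mathbb{R}^2$, reducing to a planar circle or a line $\widetilde{C}$ not through the origin, then parametrises $\widetilde{C}$ explicitly and computes the derivative of $G(t)=\arctan(\gamma_2(t)/\gamma_1(t))$ by hand, treating lines, circles centred at the origin, and off-centre circles as three separate cases (the last via the identity $|H'|^2+|H''|^2=r^2(a^2+b^2)$ for $H=\det(\gamma,\gamma')$). You instead stay on $S^2$, work with $\phi=x_2/x_1$ rather than $\arctan$, and replace all the derivative computations with a single soft step: $\phi|_C$ is real-analytic, and if it were constant on an arc then $C$ would lie in a plane $\{x_2=cx_1\}$ through the origin and hence be a great circle through both poles, which the hypothesis excludes; so the critical set is discrete. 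Your route is cleaner and avoids the case split; the paper's is more hands-on but entirely elementary, requiring no appeal to analytic continuation.
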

\begin{proof} Let $E, C$ be given. By stereographic projection on the Riemann sphere (which maps circles to circles or lines) it suffices to show that if $\widetilde{C}$ is either a circle in $\mathbb{R}^2$ or a line in $\mathbb{R}^2$ which does not pass through the origin, then $\mathcal{H}^1\left( \widetilde{F} \cap \widetilde{C}\right) = 0$, where 
\[ \widetilde{F} = \{ \lambda e^{i \theta}: \lambda \in \mathbb{R}, \quad \theta \in E \}. \]
To show this, it suffices to show that for any $(x_0,y_0) \in \widetilde{C} \setminus \{0\}$, there exists an $\epsilon >0$ such that $\mathcal{H}^1\left( \widetilde{F} \cap \widetilde{C} \cap B((x_0,y_0), \epsilon)\right) = 0$. By scaling and rotation it may be assumed that $(x_0,y_0) = (1,0)$. If $\widetilde{C}$ is a circle, then let $(a,b)$ and $r>0$ be such that $\widetilde{C}$ is parametrised by $\gamma(t) = (\gamma_1(t), \gamma_2(t) ) =  (a + r\cos t, b + r \sin t)$, where $\gamma: [0, 2\pi) \to \widetilde{C}$ satisfies $\gamma(t_0) = (1,0)$ for some $t_0 \in [0, 2\pi)$. Otherwise let $\gamma(t) = (1,0) + t(v_1,v_2)$ be the parametrisation of $\widetilde{C}$, where $v_2 \neq 0$ and $t \in \mathbb{R}$, and define $t_0=0$. It suffices to show that for $\delta>0$ sufficiently small,
\[ \mathcal{H}^1\{ t \in (t_0-\delta, t_0+\delta) : \arctan(\gamma_2(t)/\gamma_1(t) ) \in E \} = 0. \]
Equivalently, if $G:(t_0-\delta, t_0+\delta) \to (-\pi/2, \pi/2)$ is defined by  $G(t) = \arctan(\gamma_2(t)/\gamma_1(t) )$, then $(G_{\#}\mathcal{H}^1)(E) = 0$ for $\delta>0$ sufficiently small. Since $\mathcal{H}^1(E) = 0$, it is enough to show that $G_{\#}\mathcal{H}^1 \ll \mathcal{H}^1$ for $\delta>0$ sufficiently small. If $\widetilde{C}$ is a circle and $(a,b) = 0$, then $G'(t) = \frac{d}{dt} \arctan(\gamma_2(t)/\gamma_1(t) )  = 1$ for all $t$ in a neighbourhood of $t =t_0= 0$, and $G_{\#}\mathcal{H}^1 \ll \mathcal{H}^1$ follows. If $\widetilde{C}$ is a line, then 
\[  G'(t) = \frac{v_2}{|\gamma(t)|^2} \neq 0, \]
for all $t$ in a neighbourhood of $t_0=0$, and again $G_{\#}\mathcal{H}^1 \ll \mathcal{H}^1$ follows. The remaining case is where $\widetilde{C}$ is a circle and $(a,b) \neq 0$. Let $H(t) = \det( \gamma(t), \gamma'(t))$. Then
\[ G'(t) = \frac{H(t)}{|\gamma(t)|^2}, \]
for all $t$ in a neighbourhood of $0$, so if $H(t)$ is nonvanishing at $t=t_0$, it follows that $G_{\#}\mathcal{H}^1 \ll \mathcal{H}^1$ for $\delta>0$ sufficiently small. If $H(t_0)=0$, then $H(t) \neq 0$ for all $t \in (t_0-\delta, t_0+\delta) \setminus \{t_0\}$ for some $\delta>0$ sufficiently small, which follows from the identity $|H'(t)|^2 + |H''(t)|^2 = r^2(a^2 + b^2)$. Therefore $G_{\#}\mathcal{H}^1 \ll \mathcal{H}^1$ for $\delta>0$ sufficiently small. This finishes the proof. \end{proof}
\begin{theorem} \label{besicovitchhorizontal}  \hspace{2em}
\begin{enumerate} 
\item For any $\epsilon >0$, there exists a compact set of measure zero containing a horizontal unit line segment in every direction in $S^2 \setminus \mathcal{N}_{\epsilon}(\{(0,0,\pm 1)\})$. 
\item There exists a closed set of measure zero containing a horizontal unit line segment in every direction in $S^2 \setminus \{(0,0,\pm 1)\}$. 
\item There exists a closed set of measure zero containing a horizontal line in every direction in $S^2 \setminus \{(0,0,\pm 1)\}$.
 \end{enumerate} 
The above claims all hold if ``a horizontal'' is replaced by ``an $SL_2$''.\end{theorem}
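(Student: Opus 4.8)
The plan is to carry out a three–dimensional analogue of Kahane's planar construction, using the point–line duality for horizontal lines of \cite{liu} in place of the usual planar duality, and Lemma~\ref{lemmaconstr} in place of the planar Besicovitch projection phenomenon.

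\textbf{Reductions.} Since $F(x,y,z)=(x,y,2z)$ is a linear bijection carrying horizontal lines onto $SL_2$ lines with boundedly distorted segment lengths, it suffices to treat the horizontal case; the $SL_2$ statements follow by applying $F$. Next, (2) and (3) are deduced from (1). For (2), decompose $S^2\setminus\{(0,0,\pm1)\}$ into countably many compact ``annular'' direction sets $\mathcal{D}_n$, each bounded away from the poles, apply the construction behind (1) to each $\mathcal{D}_n$, and recall (from the discussion preceding the theorem) that a horizontal unit segment whose direction is within distance $\sim 1/n$ of a pole lies at Euclidean distance $\gtrsim n$ from the origin; hence the pieces can be arranged to be locally finite, so their union is closed, of measure zero, and contains a horizontal unit segment in every direction of $S^2\setminus\{(0,0,\pm1)\}$. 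For (3), one takes a further countable union of translates of a set as in (2) along the various line directions (again escaping to infinity for local finiteness) to upgrade segments to full lines.

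\textbf{Core construction for (1).} I would parametrize horizontal lines as $L_{\theta,s,h}=p(\theta,s,h)\ast\mathbb{V}_\theta$, where $\theta\in[0,\pi)$ is the angle of the projection of the line to the $xy$-plane, $s\in\mathbb{R}$ is the signed distance of that projected line to the origin, and $h\in\mathbb{R}$ is a height parameter; a short computation with the Heisenberg product shows that $\dir(L_{\theta,s,h})$ depends only on $(\theta,s)$, that every direction in $S^2\setminus\{(0,0,\pm1)\}$ is realized, and that $s$ stays bounded as long as the direction stays bounded away from the poles. A horizontal set $K=\bigcup I(\theta,s)$ containing one unit segment in each direction is thus specified by a height function $h=\psi(\theta,s)$, and under the duality of \cite{liu} it corresponds to the set $D=\{(\theta,s,\psi(\theta,s))\}$ in the dual space, with the incidence relation coming from the Heisenberg product; a physical point corresponds to a curve in the dual space, and ``$(x,y,z)\in K$'' becomes ``that curve meets $D$''. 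Slicing $K$ by the planes $\{z=\mathrm{const}\}$ and using Fubini, the assertion $\mathcal{H}^3(K)=0$ reduces to showing that, for a.e.\ $(x,y)$, a certain one–parameter (in $\theta$) set of reals built from $\psi$ is $\mathcal{H}^1$-null; after the stereographic change of variables that linearizes the quadratic (Heisenberg) terms, this is exactly the conclusion of Lemma~\ref{lemmaconstr}, with the one–parameter families becoming the circles ``not through both poles'', and the null angle set $E$ chosen explicitly (for instance as a Cantor set), which in turn determines $\psi$. Restricting $(\theta,s)$ to a compact set — forced by keeping the directions away from the poles — makes $K$ compact, which gives (1).

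\textbf{Main obstacle.} The delicate point is to set up the duality so that the Fubini slices of $K$ on the physical side correspond \emph{exactly} to the circles $C$ on $S^2$ appearing in Lemma~\ref{lemmaconstr}: one must check that the quadratic terms introduced by the Heisenberg product are absorbed without residue by the stereographic coordinates used there, and that the resulting circles never pass through both poles, so that the lemma applies. A secondary, more routine difficulty is the topological bookkeeping in the reductions — ensuring that the countable unions used to pass from the compact, pole-truncated sets of (1) to the closed sets of (2) and (3) remain closed while preserving both measure zero and the full direction set.
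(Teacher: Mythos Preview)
Your high-level strategy---point-line duality, Fubini slicing, and an appeal to Lemma~\ref{lemmaconstr}---matches the paper's, and you have correctly located the delicate point. But the core construction, as you have set it up, has a genuine gap. You take the dual set to be the \emph{graph} $D=\{(\theta,s,\psi(\theta,s))\}$ of a height function and slice $K$ in the $z$-direction (or along vertical fibers over $(x,y)$). Neither choice yields the clean picture that makes the argument close. The paper instead parametrizes horizontal lines as $\ell(a,b,c)=\{(as+b,\,s,\,c+bs/2):s\in\mathbb{R}\}$ and takes the dual set $A$ to be (a rotation of) the purely $2$-unrectifiable product $\mathcal{C}\times\mathcal{C}\times[0,1]$, with $\mathcal{C}$ the middle-halves Cantor set. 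It then slices $K(A)$ by $\{y=s\}$, which is transverse to every line in the family; the slice at $y=s$ is exactly $\{(as+b,\,c+bs/2):(a,b,c)\in A\}$, a \emph{linear} image of $A$, and after a Gram--Schmidt shear of determinant~$1$ it is the orthogonal projection of $A$ onto $(1,-s,s^2/2)^\perp$. As $s$ varies, the normalized direction $(1,-s,s^2/2)$ traces a \emph{non-great} circle on $S^2$. Lemma~\ref{lemmaconstr} is then invoked only to transfer the planar null set of bad angles for $\pi_{ie^{i\theta}}(\mathcal{C}\times\mathcal{C})$ (from the standard projection theorem for $1$-sets) to a null set of $s$ on that circle; there is no stereographic linearization of Heisenberg quadratic terms, and the $s^2/2$ is precisely what puts the direction curve on the circle.

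The graph ansatz cannot substitute for this. If $A$ is the graph of a function $c=\psi(a,b)$, the $y=s$ slice of $K$ is the image of $(a,b)\mapsto(as+b,\,\psi(a,b)+bs/2)$, which for any differentiable $\psi$ has Jacobian $s\,\partial_b\psi+s^2/2-\partial_a\psi$, generically nonzero, hence positive area. Even a nowhere-differentiable $\psi$ whose graph is purely $2$-unrectifiable would, via Besicovitch--Federer, only guarantee null projections for a.e.\ plane in the full Grassmannian, not along the specific one-parameter family arising here; the explicit product structure of $\mathcal{C}\times\mathcal{C}\times[0,1]$ is what makes the one-parameter nullity go through. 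A smaller issue: your reduction for (3) by translating ``along the various line directions'' needs a separate $\mathbb{Z}$-family of translates per direction, hence uncountably many in total, which breaks both the measure-zero and closedness bookkeeping. The paper instead reruns the construction of (2) with the arclength parameter $s$ allowed to range over all of $\mathbb{R}$, producing full lines directly with the same Fubini argument.
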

\begin{proof} Let $A$ be any compact subset of $\mathbb{R}^3$ such that $0 < \mathcal{H}^2(A) < \infty$, such that the projection of $A$ down to the $(x,y)$-plane contains a nonempty open disc, and such that for any non-great circle $C \subseteq S^2$, $\mathcal{H}^2(P_{v^{\perp} }(A)) = 0$ for $\mathcal{H}^1$-a.e.~$v \in C$. Such a set can be constructed as follows. Let $\mathcal{C}$ be the Cantor set in $\mathbb{R}$ of dimension $1/2$ obtained by starting with the unit interval $[0,1]$ and removing the open middle half interval at each step. It is a standard result that
\[ \{ 2x + y : x,y \in \mathcal{C} \} = [0, 3], \]
as can be seen, e.g., by characterising $\mathcal{C}$ as those elements of $[0,1]$ of the form $\sum_{j=1}^{\infty} \frac{\varepsilon_j}{4^j}$ with $\varepsilon_j \in \{0,3\}$ for all $j$.
It follows that if $B = \mathcal{C} \times \mathcal{C} \times [0,1]$, then the projection of $B$ onto the plane spanned by $\frac{1}{\sqrt{5}}(2,1,0)$ and $(0,0,1)$ contains a nonempty open disc. Moreover, if $v \in S^2$ is of the form $(\lambda_1 e^{i \theta}, \lambda_2)$, then 
\[ \pi_{v^{\perp}}(B) \subseteq \pi_{(ie^{i \theta},0)}(\mathcal{C} \times \mathcal{C} \times \{0\}) + \pi_{(\lambda_2 e^{i \theta}, -\lambda_1) }\left(\mathbb{R}^3\right). \]
Hence, by Fubini's theorem,
\[ \left\{v \in S^2 : \mathcal{H}^2(\pi_{v^{\perp}}(B) ) > 0 \right\} \subseteq  F := \left\{ (\lambda_1 e^{i \theta}, \lambda_2) : \theta \in E, \quad (\lambda_1, \lambda_2) \in S^1 \right\}, \]
where 
\[ E = \left\{ \theta \in [0, \pi) : \mathcal{H}^1(\pi_{ie^{i \theta}}(\mathcal{C} \times \mathcal{C}) ) >0 \right\}. \]
But $\mathcal{H}^1(E) = 0$ (see e.g.~\cite[Theorem~10.1]{mattila}), and by Lemma~\ref{lemmaconstr} this implies that $\mathcal{H}^1(F \cap C)=0$ for any non-great circle $C$. It follows that for any non-great circle $C$, $\mathcal{H}^2(\pi_{v^{\perp}}(B) )=0$ for $\mathcal{H}^1$-a.e.~$v \in C$. Since this property is rotation-invariant, replacing $B$ by a rotation of $B$ which sends the plane spanned by $\frac{1}{\sqrt{5}}(2,1,0)$ and $(0,0,1)$ to the $(x,y)$-plane in $\mathbb{R}^3$ yields the required set $A$.

For the first claim in the theorem statement, let 
\[ K = K(A)  = \bigcup_{(a,b,c) \in A} \left\{ \left(as+b, s, c + \frac{bs}{2} \right) : \lvert s \rvert \leq \frac{1}{\sqrt{ 4 + 4a^2 + b^2 }} \right\} . \]
Then, by Fubini's theorem,
\[ \mathcal{H}^3(K) \leq \int_{-1}^1 \mathcal{H}^2\left\{ (x,y) \in \mathbb{R}^2 : (x,y) = \left( as +b , c + \frac{bs}{2} \right) : (a,b,c) \in A \right\} \, ds. \]
For each $s \in (-1,1)$, the set
\[ \left\{ (x,y) \in \mathbb{R}^2 : (x,y) = \left( as +b , c + \frac{bs}{2} \right) : (a,b,c) \in A \right\} \]
can be written as 
\[ \{ \left( \langle p, (s,1,0) \rangle, \langle p, (0,s/2,1) \rangle \right) \in \mathbb{R}^2 : p \in A \}. \]
The vectors $(s,1,0)$ and $(0,s/2,1)\}$ are both orthogonal to $(1,-s,s^2/2)$. The vectors $(s,1,0)$ and $(0,s/2,1)$ are not necessarily orthogonal to each other, but by following the Gram-Schmidt process\footnote{The same observation was made in \cite{fasslerorponen2}.}, the above can be written as 
\[ A_s\left\{ \left( \langle p, (s,1,0) \rangle, \left\langle p, (0,s/2,1) - \frac{s}{2(1+s^2) }(s,1,0) \right\rangle \right) \in \mathbb{R}^2 : p \in A \right\}, \]
where $A_s: \mathbb{R}^2 \to \mathbb{R}^2$ is the linear map
\[ A_s(x,y) = \left(x, y+\frac{sx}{2(1+s^2) }\right). \]
Since $\det A_s = 1$, 
\begin{multline*} \mathcal{H}^2\{ \left( \langle p, (s,1,0) \rangle, \langle p, (0,s/2,1) \rangle \right) \in \mathbb{R}^2 : p \in A  \}  \\
= \mathcal{H}^2\left\{ \left( \left\langle p, (s,1,0) \rangle, \langle p, (0,s/2,1) - \frac{s}{2(1+s^2) }(s,1,0) \right\rangle \right) \in \mathbb{R}^2 : p \in A \right\} \\
\sim \mathcal{H}^2\left( \pi_{\left(1, -s,s^2/2\right)^{\perp}}(A)\right). \end{multline*}
The curve $(1, -s,s^2/2)$ is a parabola inside the cone $\eta_2^2 = 2 \eta_1 \eta_3$, and this cone is a clockwise rotation of the cone $\xi_3^2 = \xi_1^2 + \xi_2^2$ by $\pi/4$ in the $(\xi_1, \xi_3)$-plane. It follows that the normalised curve $\frac{1}{\sqrt{1+s^2 + s^4/4} }(1, -s,s^2/2)$ lies in the intersection of the cone $\eta_2^2 =2\eta_1 \eta_3$ with the sphere $S^2$, and this intersection is a non-great circle in $S^2$. Hence $\mathcal{H}^2\left(\pi_{(1, -s,s^2/2)^{\perp}}(A)\right) = 0$ for a.e.~$s \in \mathbb{R}$, which yields $\mathcal{H}^3(K) = 0$. Since the projection of $A$ down to the $(x,y)$-plane contains an open disc $U \subseteq \mathbb{R}^2$, for all $(a,b) \in U$ the set $K$ contains a horizontal unit line segment with direction $\frac{1}{a^2 + 1 + (b/2)^2}(a,1,b/2)$.

Any translate $A_{(x_0,y_0)} := A + (x_0,y_0,0)$ of $A$ with $(x_0,y_0) \in \mathbb{R}^2$ has the property that the projection of $A_{(x_0,y_0)}$ to $\mathbb{R}^2 \times \{0\}$ contains the open disc $U_{(x_0,y_0)} := U + (x_0,y_0)$, and also has the property that for any non-great circle $C$, the projection of $A_{(x_0,y_0)}$ onto $v^{\perp}$ has $\mathcal{H}^2$-measure zero for $\mathcal{H}^1$-a.e.~$v \in C$. Fix $(a_0,b_0) \in U$. For any $v \in S^2 \setminus \mathbb{R} \times \{0\} \times \mathbb{R}$, choose $(a_v,b_v) \in \mathbb{R}^2$ such that $(a_v+a_0, 1, (b_v+b_0)/2)$ is parallel to $v$. Then $K\left(A_{(a_v,b_v)}\right)$ contains a horizontal unit line segment with direction $\frac{1}{a^2 + 1 + (b/2)^2}(a,1,b/2)$ for all $(a,b) \in U + (a_v,b_v)$. This shows that for any $v \in S^2 \setminus \mathbb{R} \times \{0\} \times \mathbb{R}$, there is an open neighbourhood $U_v \subseteq S^2$ of $v$ and a measure zero compact set $K\left(A_{(a_v,b_v)}\right)$ containing a horizontal unit line segment with every direction in $U_v$. Since the collection of horizontal lines is invariant under rotations of the $(x,y)$-plane (which fix the third coordinate), this implies that for any $v \in S^2 \setminus \{(0,0,\pm 1\}$, there is an open neighbourhood $U_v$ of $v$ and a measure zero compact set containing a horizontal unit line segment with every direction in $U_v$.  This implies that for any $\epsilon >0$, there exists a measure zero compact set containing a horizontal unit line segment in every direction in $S^2 \setminus \mathcal{N}_{\epsilon}( \{(0,0, \pm 1 ) \} )$. This verifies the first claim.

For the second claim, take a countable sequence of translates $A_1,A_2, \dotsc$ of $A$ by vectors in $\mathbb{R}^2 \times \{0\}$ such that $d(A_j,0) \to \infty$, and such that the projections of the $A_j$'s onto $\mathbb{R}^2 \times \{0\}$ cover $\mathbb{R}^2$. Let
\begin{multline*} K_1 = (\mathbb{R} \times \{0\} \times \mathbb{R}) \cup \bigcup_{(a,b,c) \in \bigcup_j A_j } K(A_j) \\
 =  (\mathbb{R} \times \{0\} \times \mathbb{R}) \cup \bigcup_{(a,b,c) \in \bigcup_j A_j } \left\{ \left(as+b, s, c + \frac{bs}{2} \right) : \lvert s \rvert \leq \frac{1}{\sqrt{ 4 + 4a^2 + b^2 }}  \right\}, \end{multline*}
and let $K_2$ be a rotation of $K_1$ by $\pi/2$ in the $(x,y)$-plane. It has already been shown that $K = K_1 \cup K_2$ has measure zero and contains a horizontal unit line segment in every direction in $S^2 \setminus \{(0,0,\pm 1)\}$. By the condition $d(0, A_j) \to \infty$, together with the compactness of $A$ and the definition of $K$, it is straightforward to check that $K$ is closed. This verifies the second claim. 

 The third claim follows from a similar argument with the same $A_j$'s; by taking
\[ K_1 = (\mathbb{R} \times \{0\} \times \mathbb{R}) \cup  \bigcup_{(a,b,c) \in \bigcup_j A_j } \left\{ \left(as+b, s, c + \frac{bs}{2} \right) : s \in \mathbb{R}  \right\}. \]
Then $K = K_1 \cup K_2$ has measure zero by a similar argument to the above, and is a closed set of measure zero containing a horizontal line in every direction in $S^2 \setminus \{(0,0, \pm 1)\}$. 

Finally, the proof for $SL_2$ lines is similar.  \end{proof} \end{sloppypar} 

\begin{corollary} If $p \in (1,\infty]$ and
\begin{equation} \label{constineq} \left\lVert \sum_{T \in \mathbb{T}} \chi_T \right\rVert_{L^p(B(0,1) ) } \leq C_{\delta}, \end{equation}
for any set $\mathbb{T}$ of $\delta \times \delta \times 1$ $SL_2$ tubes with $\delta$-separated directions in $S^2$, then $\lim_{\delta \to 0} C_{\delta} = \infty$.

Let $U$ be any Borel subset of $S^2 \setminus \{(0,0,\pm 1)\}$ with $\mathcal{H}^2(U) >0$. If, for some fixed $p \in [1, \infty)$, 
\[ \left\lVert f_{\delta, SL_2}^{*}\right\rVert_{L^p(U) } \leq C_{\delta,U} \|f\|_p, \]
for all $\delta>0$ and for any non-negative Borel function $f$ on $\mathbb{R}^3$, then $\lim_{\delta \to 0} C_{\delta,U} =\infty$. 
 \end{corollary}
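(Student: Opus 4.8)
\emph{Plan.} I would deduce both statements from the existence of a compact $SL_2$ Besicovitch set of Lebesgue measure zero (Theorem~\ref{besicovitchhorizontal}), together with H\"older's inequality, following the classical principle that such a set obstructs a uniform Kakeya maximal bound. Fix a small $\epsilon_0>0$ and, by Theorem~\ref{besicovitchhorizontal} in its $SL_2$ form, a compact set $K_0$ with $\mathcal{H}^3(K_0)=0$ containing an $SL_2$ unit line segment in every direction of $D_0:=S^2\setminus\mathcal{N}_{\epsilon_0}(\{(0,0,\pm1)\})$; then $\mathcal{H}^2(D_0)>0$, and $K_0\subseteq B(0,R)$ for some $R\geq1$. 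The only quantitative input needed is that $\mathcal{H}^3(\mathcal{N}_\delta(K_0))\to0$ as $\delta\to0$, since $K_0$ is compact and null.

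For the second statement, shrink $\epsilon_0$ so that $\mathcal{H}^2(U_0)>0$, where $U_0:=U\cap D_0$ (possible since $\mathcal{H}^2(\mathcal{N}_\epsilon(\{(0,0,\pm1)\}))\to0$), and take the test function $f_\delta:=\chi_{\mathcal{N}_\delta(K_0)}$. For each $e\in U_0$ the set $K_0$ contains an $SL_2$ unit segment $J_e$ in direction $e$, so the tube $T_e:=\mathcal{N}_\delta(J_e)$ satisfies $\dir(T_e)=e$, $\ell(T_e)\in SL_2$ and $T_e\subseteq\mathcal{N}_\delta(K_0)$, whence $f_{\delta,SL_2}^{*}(e)\geq1$. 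Therefore $\|f_{\delta,SL_2}^{*}\|_{L^p(U)}\geq\mathcal{H}^2(U_0)^{1/p}$ while $\|f_\delta\|_p=\mathcal{H}^3(\mathcal{N}_\delta(K_0))^{1/p}\to0$, and dividing gives $C_{\delta,U}\geq\mathcal{H}^2(U_0)^{1/p}\,\mathcal{H}^3(\mathcal{N}_\delta(K_0))^{-1/p}\to\infty$.

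The first statement is more delicate, and this is where I expect the main obstacle: an $SL_2$ Besicovitch set covering a positive-measure set of directions cannot be placed inside the fixed ball $B(0,1)$ --- as remarked at the start of Section~\ref{horizontalbesicovitch}, a bounded set cannot contain $SL_2$ unit segments in all directions away from the poles, and $B(0,1)$ is far too small. My remedy would be the parabolic dilations $\delta_\lambda(x,y,z)=(\lambda x,\lambda y,\lambda^2 z)$, which (unlike Euclidean dilations or translations) map the class of $SL_2$ lines into itself --- a short computation using $ad-bc=1$ --- and are measure-scaling diffeomorphisms preserving compactness. With $\lambda:=1/(4R)$, the set $K:=\delta_\lambda(K_0)$ is a compact null set contained in $B(0,1/2)$ which contains an $SL_2$ segment of length at least some fixed $\ell_0\in(0,1)$ (depending only on $R,\epsilon_0$) in direction $\delta_\lambda(e)/|\delta_\lambda(e)|$ for each $e\in D_0$, and the image direction set $D:=\{\delta_\lambda(e)/|\delta_\lambda(e)|:e\in D_0\}$ still has $\mathcal{H}^2(D)>0$. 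Given small $\delta>0$, I would take a maximal $\delta$-separated set $\{e_j\}_{j=1}^N\subseteq D$ (so $N\gtrsim\delta^{-2}$), for each $j$ pick an $SL_2$ segment $I_j\subseteq K$ of length $\ell_0$ in direction $e_j$, extend it along its $SL_2$ line to a unit $SL_2$ segment $\widetilde{I}_j\supseteq I_j$, and set $T_j:=\mathcal{N}_\delta(I_j)$, $\widetilde{T}_j:=\mathcal{N}_\delta(\widetilde{I}_j)$. Then $\mathbb{T}:=\{\widetilde{T}_j\}_{j=1}^N$ is a family of $\delta\times\delta\times1$ $SL_2$ tubes with $\delta$-separated directions, and since $T_j\subseteq\widetilde{T}_j$ and $\bigcup_jT_j\subseteq\mathcal{N}_\delta(K)\subseteq B(0,1)$,
\[ \left\lVert\sum_{T\in\mathbb{T}}\chi_T\right\rVert_{L^p(B(0,1))}\ \geq\ \left\lVert\sum_{j=1}^N\chi_{T_j}\right\rVert_{L^p(\mathbb{R}^3)}. \]
The function on the right is supported in $E_\delta:=\mathcal{N}_\delta(K)$ and has integral $\sum_jm(T_j)\gtrsim N\delta^2\ell_0\gtrsim\ell_0$, so H\"older's inequality on $E_\delta$ gives $\bigl\lVert\sum_j\chi_{T_j}\bigr\rVert_{L^p}\gtrsim\ell_0\,\mathcal{H}^3(E_\delta)^{-1/p'}$. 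Since $\mathcal{H}^3(E_\delta)\to0$ and $1/p'>0$ (reading $p'=1$ when $p=\infty$), the right-hand side tends to $\infty$; as $C_\delta$ bounds the left-hand side for this $\mathbb{T}$, we conclude $\lim_{\delta\to0}C_\delta=\infty$.

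In summary, the only genuinely non-routine point is reconciling the fixed ball $B(0,1)$ with the necessary unboundedness of measure-zero $SL_2$ Besicovitch sets, which forces the use of the parabolic dilation (not a Euclidean symmetry) and the attendant shrinking of segment lengths to a fixed $\ell_0<1$; the resulting extension of the tubes to unit length is harmless because the extended portions lie outside $B(0,1)$ and only increase $\|\sum_T\chi_T\|_{L^p(B(0,1))}$. The remaining ingredients --- counting $\delta$-separated directions against $\mathcal{H}^2(D)$, the volume decay $\mathcal{H}^3(\mathcal{N}_\delta(K))\to0$, and H\"older's inequality --- are standard.
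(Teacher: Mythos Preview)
Your argument is correct, and for the second statement it is essentially identical to the paper's (the paper passes to a compact $V\subseteq U$ with $\mathcal{H}^2(V)>0$ and picks the Besicovitch set accordingly, which amounts to your shrinking of $\epsilon_0$).

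For the first statement your route genuinely differs from the paper's, and in a useful way. The paper simply takes the compact Besicovitch set $B\subseteq B(0,C)$ for directions in $S^2\setminus\mathcal{N}_{0.01}(\{(0,0,\pm1)\})$, picks tubes $T_k\subseteq\mathcal{N}_\delta(B)$, and bounds $\bigl\lVert\sum_k\chi_{T_k}\bigr\rVert_{L^p(B(0,C))}$ by $C_\delta$ via~\eqref{constineq}. As you correctly anticipate, this last step is not literally what the hypothesis gives, since~\eqref{constineq} is stated only for $B(0,1)$ and $SL_2$ lines are not preserved under Euclidean dilations or translations. Your use of the parabolic dilation $\delta_\lambda(x,y,z)=(\lambda x,\lambda y,\lambda^2 z)$---which does preserve the class of $SL_2$ lines (your determinant check $(\lambda a)(d/\lambda)-(\lambda b)(c/\lambda)=ad-bc=1$ is exactly right) and is a measure-scaling diffeomorphism---resolves this cleanly: it transports the Besicovitch set into $B(0,1/2)$ at the cost of shrinking the segments to some fixed length $\ell_0>0$, after which extending to unit $SL_2$ tubes only increases $\sum_T\chi_T$ pointwise. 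An alternative fix, closer in spirit to the paper, would be to note that for a sufficiently small open set of directions near the equator the construction in Theorem~\ref{besicovitchhorizontal} already yields a compact Besicovitch set inside $B(0,1)$; but your approach is more systematic and works for any fixed $\epsilon_0$.

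One small correction: your closing remark that ``the extended portions lie outside $B(0,1)$'' is not generally true---a unit extension of a segment in $B(0,1/2)$ may well remain inside $B(0,1)$---but your argument does not actually rely on this; it uses only $\chi_{T_j}\leq\chi_{\widetilde T_j}$ pointwise, which you state correctly in the displayed inequality.
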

\begin{proof}  For the first part, using Theorem~\ref{besicovitchhorizontal} let $B$ be a compact set of measure zero containing a unit line segment in every direction in $U = S^2 \setminus \mathcal{N}_{0.01}\{(0,0,\pm 1) \}$. Let $\widetilde{U} = \{v_1, \dotsc, v_N\}$ be a maximal $\delta$-separated subset of $U$, where $N \sim \delta^{-2}$. For each $v_k \in \widetilde{U}$, let $T_k$ be a $\delta \times \delta \times 1$ $SL_2$ tube such that $T \subseteq \mathcal{N}_{\delta}(B)$. Then (for a sufficiently large absolute constant $C$)
\[ 1 \lesssim \int_{B(0,C)} \sum_{k=1}^N \chi_{T_k} \leq \mathcal{H}^3( \mathcal{N}_{\delta}(B) )^{1- \frac{1}{p} } \left\lVert \sum_{k=1}^N \chi_{T_k} \right\rVert_{L^p(B(0,C) ) }. \]
Hence, by \eqref{constineq}, 
\[ 1 \lesssim \mathcal{H}^3( \mathcal{N}_{\delta}(B) )^{1- \frac{1}{p} } C_{\delta}. \]
Letting $\delta \to 0$ proves the first part.

For the second part, let $V \subseteq U$ be a compact subset of $U$ such that $\mathcal{H}^2(V)>0$. Let $B$ be a compact set of measure zero containing an $SL_2$ unit line segment in every direction in $V$, which exists by Theorem~\ref{besicovitchhorizontal}. Then 
\[ 1 \lesssim \left\lVert\left( \chi_{\mathcal{N}_{\delta}}(B)\right)_{\delta, SL_2}^{*}\right\rVert_{L^p(V) } \leq  C_{\delta,U} \mathcal{H}^3(\mathcal{N}_{\delta}(B) )^{1/p}. \]
Letting $\delta \to 0$ proves the second part.     \end{proof}

\section{Train tracks of planks} \label{planktrain} 

Given a Borel measure $\mu$ on $\mathbb{R}^3$, and $\alpha \geq 0$, recall that
\[ c_{\alpha}(\mu) = \sup_{x \in \mathbb{R}^3, r >0} \frac{ \mu(B(x,r))}{r^{\alpha}}. \]
Let
\[  I_{\alpha}(\mu) = \int \int \frac{1}{|x-y|^{\alpha}} \, d\mu(x) \, d\mu(y). \]
Recall that $\gamma: [0, 2\pi) \to S^2$ is defined by $\gamma(\theta) = \frac{1}{\sqrt{2}} \left( \cos \theta, \sin \theta, 1 \right)$, and recall that for each $\theta \in [0, 2\pi)$, $\pi_{\theta}$ is the orthogonal projection onto the plane $\gamma(\theta)^{\perp}$. In \cite{oberlin}, Oberlin and Oberlin used Erdoğan's $L^2$ bound for the decay of conical averages of Fourier transforms of fractal measures from \cite{erdogan} to prove a bound on the average $L^2$ norms of pushforwards of fractal measures under $\pi_{\theta}$. When $\alpha > 5/2$, the uncertainty principle suggests that there is no loss in the approach from \cite{oberlin} (up to the endpoint, provided only $L^2$ norms are considered), and this suggests that the Knapp examples from \cite{erdogan} used to prove sharpness of the $L^2$ conical decay rates can also be used to prove sharpness of the bound from \cite{oberlin}. The following proposition verifies this intuition. 
 
\begin{proposition} \label{wavepacket} Let $\alpha \in [2,3]$. If 
\begin{equation} \label{hypothesis} \int_0^{2\pi}  \left\lVert \pi_{\theta \#} \mu \right\rVert_{L^2\left(\gamma(\theta)^{\perp}\right)}^2 \, d\theta \leq C_{\alpha} \mu(\mathbb{R}^3) c_{\alpha}(\mu), \end{equation}
for some nonzero finite $C_{\alpha}$ depending only on $\alpha$, for all Borel measures $\mu$ on $B_3(0,1)$, then $\alpha \geq 5/2$. The same is true if $\mu(\mathbb{R}^3) c_{\alpha}(\mu)$ in \eqref{hypothesis} is replaced by $I_{\alpha}(\mu)$. \end{proposition}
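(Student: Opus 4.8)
The plan is to disprove \eqref{hypothesis} for every $\alpha \in [2, 5/2)$ by exhibiting a single ``plank'' example scaled at the Knapp ratio $\delta \times \sqrt{\delta} \times 1$ adapted to the light cone; since the inequality is assumed to hold with a fixed finite $C_\alpha$ for \emph{all} $\mu$ on $B_3(0,1)$, letting $\delta \to 0$ then forces $\alpha \geq 5/2$. Concretely, fix $\theta_0$ (one may take $\theta_0 = 0$) and let $V = \spn\{\gamma(\theta_0), \gamma'(\theta_0)\}$. Since $\gamma$ parametrises the circle cut from $S^2$ by the cone $\xi_3^2 = \xi_1^2 + \xi_2^2$, this $V$ is the tangent plane to that cone along the ruling through $\gamma(\theta_0)$, and an explicit computation gives $V = \{x : -x_1\cos\theta_0 - x_2\sin\theta_0 + x_3 = 0\}$. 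Let $P = \mathcal{N}_\delta(V) \cap B_3(0,1)$ and $\mu = \mu_\delta = \delta^{-1}\chi_P \, dx$, so $\mu(\mathbb{R}^3) \sim 1$. First I would record the size of the right hand side: estimating $\mu(B(x,r))$ separately for $r \le \delta$ and for $\delta \le r \le 1$ (the maximiser occurring at $r \sim \delta$ in both ranges) gives $c_\alpha(\mu) \lesssim \delta^{2-\alpha}$ for $\alpha \in [2,3]$, and a dyadic decomposition of $\int |x-y|^{-\alpha}\,d\mu(y)$ gives $I_\alpha(\mu) \lesssim \delta^{2-\alpha}$ for $\alpha \in (2,3)$ (with a harmless extra $\lvert\log\delta\rvert$ at $\alpha = 2$). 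Either way the right hand side of \eqref{hypothesis} is $\lesssim_\alpha \delta^{2-\alpha}$.

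The core of the argument is the lower bound for the left hand side. The key observation is that the distance from the unit vector $\gamma(\theta)$ to the subspace $V$ equals $\tfrac12\bigl(1 - \cos(\theta - \theta_0)\bigr)$, which is $\le \delta$ precisely when $\lvert\theta - \theta_0\rvert \lesssim \sqrt\delta$; this quadratic vanishing is exactly the curvature input, i.e.\ the nonvanishing of $\det(\gamma, \gamma', \gamma'')$. For $\theta$ in this $\sqrt\delta$-interval the linear map $\pi_\theta|_V$ has singular values $1$ and $\mathrm{dist}(\gamma(\theta), V) \le \delta$: it fixes the line $L = V \cap \gamma(\theta)^\perp$, while the orthogonal direction $u \in V$ satisfies $\pi_\theta(u) \perp L$ and $\lvert\pi_\theta(u)\rvert = \sin\angle(\gamma(\theta), V) = \mathrm{dist}(\gamma(\theta), V)$. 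Hence $\pi_\theta$ maps the unit disc of $V$ into a rectangle of dimensions $O(1)\times O(\delta)$, and after thickening by $\delta$ one gets $\mathcal{H}^2(\pi_\theta(P)) \lesssim \delta$. Since $\pi_{\theta\#}\mu$ is an absolutely continuous measure (the projection of one along a linear foliation) of total mass $\sim 1$ supported on $\pi_\theta(P)$, Cauchy--Schwarz gives $\lVert\pi_{\theta\#}\mu\rVert_{L^2(\gamma(\theta)^\perp)}^2 \gtrsim \mathcal{H}^2(\pi_\theta(P))^{-1} \gtrsim \delta^{-1}$.

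Integrating over the interval $\lvert\theta - \theta_0\rvert \lesssim \sqrt\delta$ then yields
\[ \int_0^{2\pi}\lVert\pi_{\theta\#}\mu\rVert_{L^2(\gamma(\theta)^\perp)}^2\,d\theta \gtrsim \sqrt\delta \cdot \delta^{-1} = \delta^{-1/2}. \]
Comparing with the upper bound $\lesssim_\alpha \delta^{2-\alpha}$ for the right hand side of \eqref{hypothesis} (and likewise for its $I_\alpha$ variant) forces $C_\alpha \gtrsim \delta^{\alpha - 5/2}$; when $\alpha < 5/2$ the right side blows up as $\delta \to 0$, contradicting the finiteness of $C_\alpha$, so $\alpha \ge 5/2$. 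I expect the only genuinely delicate point to be verifying, \emph{uniformly} over the whole $\sqrt\delta$-interval, that the fibre direction $\gamma(\theta)$ stays within angle $\lesssim \delta$ of the fixed plane $V$ (equivalently that $\pi_\theta(P)$ has area $\lesssim \delta$ throughout); everything else is routine, but this is precisely where the curvature of $\gamma$ enters and is the geometric analogue of the Knapp example behind the sharpness of Erdoğan's conical $L^2$ bound.
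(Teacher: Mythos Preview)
Your proof is correct and reaches the same threshold $\alpha \ge 5/2$ as the paper, but via a different test measure. The paper uses the standard Knapp plank of dimensions $1 \times \delta^{1/2} \times \delta$ (long direction $\gamma(\theta_0)$, medium $\gamma'(\theta_0)$, short $(\gamma\times\gamma')(\theta_0)$) with density $\delta^{-3/2}$; this gives $c_\alpha(\mu)\sim\delta^{3/2-\alpha}$ and a pointwise estimate $(\pi_{\theta\#}\mu)\gtrsim\delta^{-3/2}$ on a $\delta^{1/2}\times\delta$ rectangle for $|\theta-\theta_0|\lesssim\delta^{1/2}$, hence $\int\|\pi_{\theta\#}\mu\|_2^2\,d\theta\gtrsim\delta^{-1}$, and comparing forces $\alpha\ge 5/2$. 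Your slab $P=\mathcal{N}_\delta(V)\cap B(0,1)$ has dimensions $\sim 1\times 1\times\delta$, yielding the smaller $c_\alpha\sim\delta^{2-\alpha}$ and the correspondingly weaker lower bound $\gtrsim\delta^{-1/2}$, but the ratio is identical and the conclusion the same. Your variant is arguably cleaner for this proposition: the $c_\alpha$ and $I_\alpha$ computations reduce to those for a $\delta$-thickened $2$-plane, and the Cauchy--Schwarz bound $\|\pi_{\theta\#}\mu\|_2^2\ge \mu(\mathbb{R}^3)^2/\mathcal{H}^2(\pi_\theta(P))$ avoids any explicit density calculation; your key input, that the Jacobian of $\pi_\theta|_V$ equals $|\langle\gamma(\theta),n_V\rangle|=\tfrac12(1-\cos(\theta-\theta_0))$, is exactly right. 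On the other hand, the paper's $1\times\delta^{1/2}\times\delta$ plank is the building block for the subsequent train-track constructions (Propositions~\ref{traintrackprop} and~\ref{wavepacket2}), where a full slab would not substitute directly. The ``delicate point'' you flag at the end is in fact already dispatched by your explicit formula for $\mathrm{dist}(\gamma(\theta),V)$; nothing further is required there.
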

\begin{remark} Oberlin and Oberlin proved that \eqref{hypothesis} does hold if $\alpha > 5/2$, with either $\mu(\mathbb{R}^3) c_{\alpha}(\mu)$ or $I_{\alpha}(\mu)$ on the right-hand side \cite{oberlin}. \end{remark}
\begin{proof}[Proof of Proposition~\ref{wavepacket}] Let $\alpha \in [2,3]$ and suppose that \eqref{hypothesis} holds. Let $\delta>0$ be small. Fix any $\theta_0 \in [0, 2\pi)$. Let $\psi$ be a smooth non-negative bump function supported in $B(0,1)$, with $\psi \sim 1$ on $B(0,1/2)$, such that $\int \psi = 1$. Let $\mu$ be the measure with Radon-Nikodym derivative equal to 
\[ \mu(x) = \delta^{-3/2}\psi\left( \langle x, \gamma(\theta_0) \rangle,  \left\langle \delta^{-1/2}x, \sqrt{2} \gamma'(\theta_0) \right\rangle, \left\langle \delta^{-1} x, \sqrt{2} \left(\gamma \times \gamma'\right)(\theta_0) \right\rangle \right), \]
i.e., $\mu$ is a Schwartz function of $L^1$ norm 1 supported on a plank of dimensions $\sim 1 \times \delta^{1/2} \times \delta$ centred at the origin, such that the longest is direction is parallel to $\gamma(\theta_0)$, the medium direction is parallel to $\gamma'(\theta_0)$ and the shortest direction is parallel to $(\gamma \times \gamma')(\theta_0)$. If $|y_1| \leq \delta^{1/2}/100$, $|y_2| \leq \delta/100$, and $|y_3| \leq 1/100$, then 
\[ \mu\left(y_1 \gamma'(\theta_0) + y_2 \left( \gamma \times \gamma' \right)(\theta_0)  + y_3 \gamma(\theta) \right) \gtrsim \delta^{-3/2}. \]
By second-order Taylor approximation, it follows that if $|\theta-\theta_0| < c\delta^{1/2}$ for $c>0$ a sufficiently small absolute constant, then for all $|x_1| \leq 10^{-3} \delta^{1/2}$ and $|x_2| \leq 10^{-3} \delta$,
\begin{multline*} (\pi_{\theta \#} \mu)\left( x_1 \gamma'(\theta) + x_2 \left( \gamma \times \gamma' \right)(\theta) \right) \\
\geq \int_{-10^{-3}}^{10^{-3} } \mu\left(x_1 \gamma'(\theta) + x_2 \left( \gamma \times \gamma' \right)(\theta)  + t \gamma(\theta) \right) \, dt \gtrsim \delta^{-3/2}. \end{multline*}
Therefore
\begin{align*} &\int_0^{2\pi} \left\lVert \pi_{\theta \#} \mu \right\rVert_{L^2(\gamma(\theta)^{\perp})}^2 \, d\theta \\
&\geq \int_{|\theta - \theta_0| \leq c \delta^{1/2}} \int_{\left[-10^{-3} \delta^{1/2}, 10^{-3} \delta^{1/2} \right]} \int_{\left[-10^{-3} \delta, 10^{-3}\delta\right]} \\
&\qquad \left\lvert (\pi_{\theta \#} \mu)\left( x_1 \gamma'(\theta) + x_2 \left( \gamma \times \gamma' \right)(\theta) \right)\right\rvert^2 \, dx_1 \, dx_2 \, d\theta \\
&\gtrsim \delta^{-1}. \end{align*}
But $\mu(\mathbb{R}^3) =1$, and it is straightforward to check that 
\[ c_{\alpha}(\mu) \sim \delta^{\frac{3}{2}-\alpha}. \]
Similarly $I_{\alpha}(\mu) \sim \delta^{\frac{3}{2}-\alpha}$ (e.g.~via the Plancherel formula for the energy). Hence, by the assumed inequality \eqref{hypothesis},
\[ \delta^{-1} \lesssim \delta^{\frac{3}{2}-\alpha}. \]
Letting $\delta \to 0$ gives $-1 \geq \frac{3}{2}-\alpha$ or $\alpha \geq 5/2$.  \end{proof}

In the previous example, $c_{\alpha}(\mu)$ is much larger than $\mu(\mathbb{R}^3)$. But if $\nu$ is the sum of $N$ copies of $\mu$ translated in the $\gamma'(\theta_0)$ and $(\gamma \times \gamma')(\theta_0)$ directions, such that the projections of these translated copies under $\pi_{\theta}$ are pairwise disjoint for $|\theta-\theta_0| \lesssim \delta^{1/2}$, then the lower bound for the left-hand side will be multiplied by $N$. If the translations are chosen sufficiently sparse, to ensure that $c_{\alpha}(\nu)$ is not larger than $c_{\alpha}(\mu)$, then the right-hand side will also be scaled by $N$, since $\mu(\mathbb{R}^3)$ will be scaled by $N$. If $N$ could be chosen large enough to make $\nu(\mathbb{R}^3)\sim \delta^{3/2-\alpha} \sim c_{\alpha}(\nu) \sim c_{\alpha}(\mu)$, then this would show that no inequality is possible even with the larger right-hand side of $c_{\alpha}(\nu)^2$ instead of $\nu(\mathbb{R}^3)c_{\alpha}(\nu)$ in \eqref{hypothesis}. The proposition below shows that this is possible using ``parallel train tracks of planks''. The spacing is based on the example of ``parallel train tracks'' from \cite[Proposition~6.1]{GIOW}, where the long spaces are a $\delta^{-1/2}$-multiple of the short spaces (the $R$ in \cite{GIOW} corresponds to $\delta^{-1}$ here).

\begin{proposition} \label{traintrackprop} Let $\alpha \in [2,3]$. If 
\begin{equation} \label{traintrack} \int_0^{2\pi} \left\lVert \pi_{\theta \#} \nu \right\rVert_{L^2(\gamma(\theta)^{\perp})}^2 \, d\theta \leq C_{\alpha} c_{\alpha}(\nu)^2, \end{equation}
for some nonzero finite $C_{\alpha}$ depending only on $\alpha$, for all Borel measures $\nu$ on $B_3(0,1)$, then $\alpha \geq 5/2$. \end{proposition}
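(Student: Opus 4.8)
The plan is to build on the single-plank example from Proposition~\ref{wavepacket} by forming a "train track" of many translated planks arranged so that (i) the pushforwards $\pi_{\theta\#}$ of the individual planks remain essentially disjoint for all $\theta$ with $|\theta - \theta_0| \lesssim \delta^{1/2}$, so the left-hand side of \eqref{traintrack} scales linearly in the number $N$ of planks, while (ii) the translations are spaced far enough apart that $c_\alpha(\nu)$ is not increased, i.e.\ $c_\alpha(\nu) \sim c_\alpha(\mu) \sim \delta^{3/2 - \alpha}$. If one can push $N$ up to $N \sim \delta^{3/2 - \alpha}$ while keeping (i) and (ii), then the left side is $\gtrsim N \delta^{-1} \sim \delta^{1/2 - \alpha}$, the right side is $\sim \delta^{3 - 2\alpha}$, and comparing as $\delta \to 0$ forces $1/2 - \alpha \geq 3 - 2\alpha$, i.e.\ $\alpha \geq 5/2$.

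First I would set up coordinates adapted to $\theta_0$: write $\mathbb{R}^3 = \spn(\gamma(\theta_0)) \oplus \spn(\gamma'(\theta_0)) \oplus \spn((\gamma\times\gamma')(\theta_0))$, the long/medium/short axes of the reference plank from Proposition~\ref{wavepacket}. The translates will be placed on a lattice in the $\gamma'(\theta_0)$ and $(\gamma\times\gamma')(\theta_0)$ directions. The key geometric point, borrowed from the "parallel train tracks" example of \cite[Proposition~6.1]{GIOW}, is the asymmetric spacing: in the short direction $(\gamma\times\gamma')(\theta_0)$ the planks are separated by $\sim\delta$ (just enough to be disjoint), while in the medium direction $\gamma'(\theta_0)$ they are separated by a much larger amount $\sim \delta^{1/2}$ times the short spacing, i.e.\ the spacing profile is "thin tracks, wide gaps between tracks." One checks via the second-order Taylor expansion of $\theta \mapsto \pi_\theta$ (exactly as in Proposition~\ref{wavepacket}, where $\pi_\theta$ tilts by $O(\delta^{1/2})$ over the range $|\theta-\theta_0|\lesssim\delta^{1/2}$) that with this spacing the images $\pi_\theta(P_j)$ of distinct planks $P_j$ stay pairwise disjoint for all such $\theta$: the $O(\delta^{1/2})\cdot(\text{length }1)=O(\delta^{1/2})$ drift along $\gamma'(\theta_0)$ is absorbed by the wide gaps, and the drift in the short direction is $O(\delta)$, absorbed by the short spacing. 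Consequently $\|\pi_{\theta\#}\nu\|_{L^2}^2 \gtrsim N \delta^{-1}$ for $|\theta-\theta_0|\lesssim \delta^{1/2}$, hence the left side of \eqref{traintrack} is $\gtrsim N\delta^{-1/2}$.

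Next I would verify the ball condition $c_\alpha(\nu)\lesssim \delta^{3/2-\alpha}$. Since $\nu$ is a sum of $N$ unit-mass planks each with $c_\alpha \sim \delta^{3/2-\alpha}$, I need that no ball $B(x,r)$ meets too many planks relative to $r^\alpha$. For $r\le\delta$ only one plank is relevant and $\nu(B(x,r))\lesssim \delta^{-3/2}r^3 \le \delta^{3/2-\alpha} r^\alpha$ (using $\alpha\le 3$); for $r$ between $\delta$ and $\delta^{1/2}$, a ball of radius $r$ meets $\lesssim (r/\delta)$ planks within one track but only one track, giving $\nu(B(x,r))\lesssim (r/\delta)\cdot \delta^{-3/2}\cdot \delta\cdot\delta^{1/2}\cdot r = \delta^{-3/2} r^2$... here I must be a bit careful, and the right count is the one that makes the asymmetric $\delta^{1/2}$-spacing exactly optimal — the spacing in \cite{GIOW} is chosen precisely so this works out — ultimately yielding $\nu(B(x,r))\lesssim \delta^{3/2-\alpha}r^\alpha$ across all scales; and for $r\gtrsim\delta^{1/2}$ one uses that the number of planks in a ball of radius $r$ is $\lesssim (r/\delta)\cdot(r/\delta^{1/2})=r^2\delta^{-3/2}$ and that the total mass is thereby $\lesssim r^2 \delta^{-3/2}$, which is $\lesssim \delta^{3/2-\alpha}r^\alpha$ iff $N\lesssim\delta^{3/2-\alpha}$ and $\alpha\le 3$. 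This also caps $N$: to keep everything inside $B_3(0,1)$ with the stated spacings, $N$ can be taken as large as $\sim (\delta^{-1})\cdot(\delta^{-1/2}) \cdot$(fixing one along $\gamma$) — more than enough to reach $N\sim\delta^{3/2-\alpha}$ for $\alpha\in[2,3]$ — so I would simply choose $N=\lfloor\delta^{3/2-\alpha}\rfloor$ and fit the corresponding train track into the unit ball. Putting the two bounds together in \eqref{traintrack}: $N\delta^{-1/2}\lesssim C_\alpha (\delta^{3/2-\alpha})^2$, i.e.\ $\delta^{1-\alpha}\lesssim \delta^{3-2\alpha}$, giving $\alpha\ge 5/2$ as $\delta\to0$.

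The main obstacle is the bookkeeping in step three: getting the ball condition $c_\alpha(\nu)\sim\delta^{3/2-\alpha}$ to hold at \emph{every} scale $r\in(0,1)$ simultaneously, which is exactly what dictates the peculiar $\delta^{-1/2}$-asymmetric spacing and requires checking the three regimes $r\lesssim\delta$, $\delta\lesssim r\lesssim\delta^{1/2}$, $\delta^{1/2}\lesssim r\lesssim 1$ against the plank-counting bounds. Everything else — the Taylor-expansion disjointness of the projected planks and the final exponent arithmetic — is a direct adaptation of Proposition~\ref{wavepacket} and \cite[Proposition~6.1]{GIOW}.
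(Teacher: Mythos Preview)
Your overall strategy matches the paper's, but the construction as you have written it does not work: the spacing you propose is wrong, and this breaks the ball condition. You put the planks at spacing $\sim\delta$ in the short direction and $\sim\delta^{1/2}$ in the medium direction, i.e.\ essentially contiguous in both directions. With that packing, consider a ball of radius $r=\delta^{1/2}$ centred on the support: it sees density $\sim\delta^{-3/2}$ on its full volume, so $\nu(B(x,r))\sim\delta^{-3/2}r^3=1$ and $\nu(B(x,r))/r^{\alpha}\sim\delta^{-\alpha/2}$, which for every $\alpha<3$ is strictly larger than the target $\delta^{3/2-\alpha}$. Thus $c_\alpha(\nu)\gtrsim\delta^{-\alpha/2}$, and plugging this into \eqref{traintrack} yields only the trivial conclusion $\alpha\ge 2$. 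Saying you will ``simply choose $N=\lfloor\delta^{3/2-\alpha}\rfloor$'' does not help: with fixed spacing $\delta$ the $N$ planks are concentrated in a region of diameter $\sim N\delta$, and the same local calculation shows $c_\alpha$ is too large. The paper's fix is to make the spacing itself $\alpha$-dependent: $\delta^{\alpha/2-1/2}$ in the short direction and $\delta^{\alpha/2-1}$ in the medium direction, which gives exactly $\delta^{3/2-\alpha}$ planks filling the unit ball. The verification of $c_\alpha(\nu)\lesssim\delta^{3/2-\alpha}$ then requires five scale ranges (separated by $\delta$, $\delta^{\alpha/2-1/2}$, $\delta^{1/2}$, $\delta^{\alpha/2-1}$, $1$), not three, because the two spacings introduce two additional transition scales.

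There is also an arithmetic slip in your execution that compounds the problem. For a single plank one has $\|\pi_{\theta\#}\mu\|_{L^2}^2\sim\delta^{-3/2}$ pointwise in $\theta$ (not $\delta^{-1}$), and after integrating over $|\theta-\theta_0|\lesssim\delta^{1/2}$ the left side of \eqref{traintrack} is $\gtrsim N\delta^{-1}$, as you correctly wrote in your opening plan. Your later line ``hence the left side is $\gtrsim N\delta^{-1/2}$'' and the resulting comparison $\delta^{1-\alpha}\lesssim\delta^{3-2\alpha}$ give only $\alpha\ge 2$; with the correct $N\delta^{-1}=\delta^{1/2-\alpha}$ one gets $\alpha\ge 5/2$ as desired.
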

\begin{proof} Let $\alpha \in [2,3]$ and suppose that \eqref{traintrack} holds. Let $\delta>0$ be small, and fix any $\theta_0 \in [0, 2\pi)$. Let $\mu = \mu_{\delta, \theta_0}$ be the measure from the proof of Proposition~\ref{wavepacket}, given by 
\[ \mu(x) = \delta^{-3/2}\psi\left( \langle x, \gamma(\theta_0) \rangle,  \left\langle \delta^{-1/2}x, \sqrt{2} \gamma'(\theta_0) \right\rangle, \left\langle \delta^{-1} x, \sqrt{2} \left(\gamma \times \gamma'\right)(\theta_0) \right\rangle \right), \]
where $\psi$ is a smooth non-negative bump function supported in $B(0,1)$, with $\psi \sim 1$ on $B(0,1/2)$, such that $\int \psi = 1$. Let $\nu$ be the sum of translated copies of $\mu$; spacing $\delta^{\frac{\alpha}{2}-\frac{1}{2}}$ in the short direction $(\gamma \times \gamma')(\theta_0)$, and spacing $\delta^{\frac{\alpha}{2}-1}$ in the medium direction $\gamma'(\theta_0)$, given by 
\[ \nu(x) = \sum_{|m| \leq 10^{-3} \delta^{\frac{1}{2} - \frac{\alpha}{2}}} \sum_{|n| \leq 10^{-3} \delta^{1 - \frac{\alpha}{2}}} \mu\left( x- \delta^{\frac{\alpha}{2} - \frac{1}{2} } m \left(\gamma \times \gamma'\right)(\theta_0) - n \delta^{\frac{\alpha}{2} -1 } \gamma'(\theta_0) \right). \]
 There are $\sim \delta^{\frac{3}{2}-\alpha}$ such copies, and the supports of the projections of these translated copies under $\pi_{\theta}$ are pairwise disjoint for $|\theta-\theta_0| \leq c\delta^{1/2}$, for a sufficiently small absolute constant $c$ (using $\alpha \leq 3$ and second-order Taylor approximation), so the lower bound of $\delta^{-1}$ from the proof of Proposition~\ref{wavepacket} is multiplied by $\delta^{\frac{3}{2}-\alpha}$ to get 
\[ \int_0^{2\pi} \left\lVert \pi_{\theta \#} \nu \right\rVert_{L^2(\gamma(\theta)^{\perp})}^2 \, d\theta \gtrsim \delta^{\frac{1}{2}-\alpha}. \] 
Moreover, $\nu(\mathbb{R}^3) \sim \delta^{\frac{3}{2}-\alpha}$, and 
\begin{equation} \label{tricky} c_{\alpha}(\nu) \sim  \delta^{\frac{3}{2}-\alpha}. \end{equation}
The lower bound $c_{\alpha}(\nu) \gtrsim  \delta^{\frac{3}{2}-\alpha}$ in \eqref{tricky} follows from $c_{\alpha}(\nu) \gtrsim \nu(\mathbb{R}^3)$. The upper bound can be shown by considering different ranges of $r$ separately, as follows. The maximum of $\nu(B(x,r))/ r^{\alpha}$ over the range $0 < r \leq \delta$ occurs near $r=\delta$ and is $\lesssim \delta^{\frac{3}{2}-\alpha}$. The maximum for the range $\delta \leq r \leq\delta^{\frac{\alpha}{2}-\frac{1}{2}}$ also occurs near $r = \delta$ (since the ball cannot intersect multiple ``slats''), and is $\lesssim \delta^{\frac{3}{2}-\alpha}$. The maximum in the range $\delta^{\frac{\alpha}{2}-\frac{1}{2}} \leq r \leq \delta^{1/2}$ occurs near $r = \delta^{1/2}$, and is $\lesssim \delta^{\frac{3}{2}-\alpha}$. The maximum for the range $\delta^{1/2} \leq r \leq \delta^{\frac{\alpha}{2}-1}$ also occurs near $r= \delta^{1/2}$ (since the ball cannot intersect multiple ``tracks''), and is $\lesssim \delta^{\frac{3}{2}-\alpha}$. The maximum for the range $\delta^{\frac{\alpha}{2}-1} \leq r \leq 1$ occurs near $r=1$, and is $\lesssim \delta^{\frac{3}{2}-\alpha}$.  Applying the assumed inequality \eqref{traintrack} gives 
\[ \delta^{\frac{1}{2}-\alpha} \lesssim \delta^{3-2\alpha}. \]
Letting $\delta \to 0$ gives $\frac{1}{2}-\alpha \geq 3 - 2\alpha$ or $\alpha \geq 5/2$.    \end{proof}

It may be possible to weaken the requirement $\alpha \geq 5/2$ by replacing the $L^2$ norm on the left in the previous examples by an $L^p$ norm with $1 < p < 2$. Proposition~\ref{wavepacket2} below shows that at the critical exponent $\alpha = 2$, the average $L^p$ norms of $\delta$-discretised measures cannot be bounded by $C_{\epsilon} \delta^{-\epsilon}c_2(\mu)^{p}$ unless $p \leq 3/2$. The example is similar to the ``train tracks of planks'' used in Proposition~\ref{traintrackprop}, except that as $\alpha$ approaches 2, the spaces between the parallel ``train tracks'' tends to a distance $\sim 1$, and the example reduces to a single ``train track of planks'', which is a plank version of the original ``train track'' example from \cite[p.~563]{wolff} and \cite[p.~151]{katztao}.  When $\alpha = 2$, a uniform bound (without the $C_{\epsilon} \delta^{-\epsilon}$ factor) is known to not be possible; by considering a purely 2-unrectifiable set in $\mathbb{R}^3$ and applying the Besicovitch-Federer projection theorem. Finally, the counterexample for projections onto lines below is based on the Knapp example from \cite{erdogan} used to prove sharpness of the $L^2$ conical decay rates of 1-dimensional fractal measures.

\begin{proposition} \label{wavepacket2}  
\begin{enumerate} Let $p \in [1, \infty)$ and let $\epsilon >0$. 
\item If, for any $\delta>0$, 
\begin{equation} \label{hypothesis2} \int_0^{2\pi}  \left\lVert \pi_{\theta \#} \mu \right\rVert_{L^p(\gamma(\theta)^{\perp})}^p \, d\theta \leq C_{\epsilon}\delta^{-\epsilon} c_2(\mu)^{p}, \end{equation}
for all Borel measures $\mu$ on $B_3(0,1)$ of the form 
\[ \mu = \frac{1}{\delta^3\mathcal{H}^3(B(0,1))} \sum_{B \in \mathcal{B}} a_B \chi_B,  \]
where $a_B > 0$ for all $B \in \mathcal{B}$, and where $\mathcal{B}$ is a disjoint family of $\delta$-balls,  then $p \leq 3/2+\epsilon$.
\item If, for any $\delta>0$,
\begin{equation} \label{hypothesis27} \int_0^{2\pi}  \left\lVert \rho_{\theta \#} \mu \right\rVert_{L^p(\spn(\gamma(\theta)))}^p \, d\theta \leq C_{\epsilon}\delta^{-\epsilon} c_1(\mu)^{p}, \end{equation}
for all Borel measures $\mu$ on $B_3(0,1)$ of the form 
\[ \mu = \frac{1}{\delta^3\mathcal{H}^3(B(0,1))} \sum_{B \in \mathcal{B}} a_B \chi_B, \]
where $a_B > 0$ for all $B \in \mathcal{B}$, and where $\mathcal{B}$ is a disjoint family of $\delta$-balls,  then $p \leq 3/2+\epsilon$. \end{enumerate} \end{proposition}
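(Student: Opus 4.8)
The plan is to construct, for each small $\delta>0$, an explicit ``train track of planks'' measure of the required form $\mu = \frac{1}{\delta^3 \mathcal{H}^3(B(0,1))} \sum_{B \in \mathcal{B}} a_B \chi_B$ and to show that it forces the exponent constraint. Concretely, I would fix $\theta_0 \in [0,2\pi)$ and take a single plank $P$ of dimensions $\sim 1 \times \delta^{1/2} \times \delta$ centred at the origin, with long direction $\gamma(\theta_0)$, medium direction $\gamma'(\theta_0)$, and short direction $(\gamma \times \gamma')(\theta_0)$, as in the proof of Proposition~\ref{wavepacket}. Inside this plank I place $\sim \delta^{-1/2}$ sub-planks of dimensions $\sim 1 \times \delta \times \delta$ (the ``slats''), spaced $\sim \delta^{1/2}$ apart along the medium direction $\gamma'(\theta_0)$ — this is the $\alpha = 2$ limit of the train-track spacing, so there is only one ``track''. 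Each slat is in turn a union of $\sim \delta^{-1}$ essentially disjoint $\delta$-balls, and I take $\mathcal{B}$ to be the resulting collection of $\delta$-balls and all $a_B$ equal, giving a measure of exactly the prescribed form (up to the normalisation constant). Then $\mu(\mathbb{R}^3) \sim \delta^{-1/2} \cdot (\text{single slat mass})$ and, as in \eqref{tricky}, checking the ranges $0<r\le\delta$, $\delta\le r\le\delta^{1/2}$, $\delta^{1/2}\le r\le 1$ separately shows $c_2(\mu) \sim \mu(\mathbb{R}^3) \sim 1$ after the $\delta^{-3}$ normalisation is arranged so that a single $\delta$-ball carries mass $\sim \delta^3 / \delta^3 \cdot (\ldots)$; the key point is that the slats are spaced so that no $r$-ball with $r$ between $\delta$ and $\delta^{1/2}$ sees more than $O(1)$ slats' worth of relative density, so $c_2(\mu)$ is comparable to its value at $r\sim 1$.

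Next I would estimate the left-hand side from below. For $|\theta - \theta_0| \le c\delta^{1/2}$, second-order Taylor approximation of $\gamma$ shows that $\pi_\theta$ is, to within the relevant scales, the projection along $\gamma(\theta_0)$, so the projection of each slat under $\pi_\theta$ is a $\sim \delta \times \delta$ square in $\gamma(\theta)^\perp$, and the $\sim \delta^{-1/2}$ slats — being spaced $\sim \delta^{1/2}$ apart in the $\gamma'(\theta_0)$ direction — project to $\sim \delta^{-1/2}$ essentially disjoint such squares filling out a $\sim 1 \times \delta$ rectangle. On each such square the density $\pi_{\theta\#}\mu$ is $\gtrsim \delta^{-3/2}\cdot\delta^{-1/2}\cdot\delta = \delta^{-1}$ times the ball-normalisation, i.e. comparable to the single-plank value $\sim \delta^{-1}$ (the $t$-integration over a length-$\sim 1$ interval contributes the factor that makes this work, exactly as in Proposition~\ref{wavepacket}). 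Hence $\|\pi_{\theta\#}\mu\|_{L^p(\gamma(\theta)^\perp)}^p \gtrsim (\delta^{-1})^p \cdot (\text{total area of the squares}) \sim \delta^{-p}\cdot\delta$, and integrating over $|\theta-\theta_0|\le c\delta^{1/2}$ gives the lower bound $\gtrsim \delta^{-p}\cdot\delta\cdot\delta^{1/2} = \delta^{3/2 - p}$ for the left-hand side of \eqref{hypothesis2}. Since $c_2(\mu)^p \sim 1$, the hypothesis \eqref{hypothesis2} yields $\delta^{3/2-p} \lesssim C_\epsilon \delta^{-\epsilon}$; letting $\delta \to 0$ forces $3/2 - p \ge -\epsilon$, i.e. $p \le 3/2 + \epsilon$, proving the first part.

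For the second part I would run the same scheme with $\rho_\theta$ in place of $\pi_\theta$ and $c_1$ in place of $c_2$. Here $\rho_\theta$ projects onto the $1$-dimensional span of $\gamma(\theta)$, so the relevant extremiser is the Knapp example from \cite{erdogan} for $1$-dimensional conical decay: a plank (or a train track of planks) of dimensions $\sim 1 \times \delta^{1/2} \times \delta$ again works, but now one packs $\sim \delta^{-1}$ translated copies so that their $\rho_\theta$-images are essentially disjoint intervals of length $\sim \delta$ in $\spn(\gamma(\theta))$ for $|\theta - \theta_0| \le c\delta^{1/2}$; the normalisation is chosen to make $c_1(\mu) \sim 1$ by the same range-by-range check, now with exponent $1$ and the ball spacings adjusted so no $r$-ball overcounts the density at any intermediate scale. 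Computing the left-hand side of \eqref{hypothesis27} the same way gives a lower bound of the form $\delta^{3/2 - p}$ (the bookkeeping differs but the net power is the same because the $\delta^{1/2}$ from the $\theta$-range and the codimension shift from planes to lines conspire), and comparing with $c_1(\mu)^p \sim 1$ again forces $p \le 3/2 + \epsilon$.

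The main obstacle I anticipate is the careful verification that $c_2(\mu) \sim 1$ (resp. $c_1(\mu) \sim 1$) for the discretised, disjoint-$\delta$-ball version of the measure: one must check that the intermediate scales $\delta \le r \le \delta^{1/2}$ (and, in the genuine train-track version, $\delta^{1/2} \le r \le 1$) do not produce a density bump exceeding the value at scale $r \sim 1$, which is exactly why the $\delta^{1/2}$-spacing of the slats (and not a smaller spacing) is forced — this is the same delicate point as in \eqref{tricky}, but now complicated by the requirement that $\mu$ be a sum of characteristic functions of \emph{disjoint} $\delta$-balls with \emph{positive} weights rather than a smooth bump, so one cannot freely mollify. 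A secondary technical point is making the second-order Taylor approximation of $\theta \mapsto \gamma(\theta)$ precise enough that the projected squares (resp. intervals) of distinct slats remain genuinely disjoint over the whole range $|\theta - \theta_0| \le c\delta^{1/2}$; this uses $\det(\gamma, \gamma', \gamma'') \ne 0$ and is routine but must be stated.
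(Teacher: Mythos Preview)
Your overall strategy is right, but the explicit construction in part~(1) has a genuine error that breaks the argument. You take ``slats'' of dimensions $1\times\delta\times\delta$ and space them $\delta^{1/2}$ apart in the \emph{medium} direction $\gamma'(\theta_0)$. The paper does neither of these: it takes the full plank $1\times\delta^{1/2}\times\delta$ (not a $1\times\delta\times\delta$ tube) and translates copies of it by spacing $\delta^{1/2}$ in the \emph{short} direction $(\gamma\times\gamma')(\theta_0)$. The difference is decisive for the $c_2$ calculation. A single $1\times\delta\times\delta$ tube is one-dimensional at scales $\delta\le r\le 1$: for a ball of radius $r$ centred on the tube one has $\mu(B(x,r))\sim \rho\, r\delta^2$, so $\mu(B(x,r))/r^2\sim \rho\delta^2/r$, which is maximised at $r=\delta$ and equals $\rho\delta$, whereas at $r\sim 1$ the total mass is $\rho\delta^2$. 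Spacing $\delta^{-1/2}$ such tubes $\delta^{1/2}$ apart does not fix this; one still gets $c_2(\mu)\sim\delta^{-1/2}\mu(\mathbb{R}^3)$, not $c_2(\mu)\sim\mu(\mathbb{R}^3)$. Plugging $c_2(\mu)\sim\delta^{-1/2}$ (with your normalisation $\mu(\mathbb{R}^3)\sim 1$) into the hypothesis and using your own lower bound $\delta^{3/2-p}$ only yields $p\le 3+2\epsilon$, which is vacuous. The reason the paper's planks work is that a $1\times\delta^{1/2}\times\delta$ box is genuinely two-dimensional at all scales $\delta\le r\le 1$ (the ratio $\mu(B(x,r))/r^2$ is constant in $r$ on a single plank), and translating in the short direction with spacing $\delta^{1/2}$ preserves this, giving $c_2(\nu)\sim\nu(\mathbb{R}^3)\sim\delta^{-1/2}$ and the correct conclusion. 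Your sentence ``no $r$-ball with $r$ between $\delta$ and $\delta^{1/2}$ sees more than $O(1)$ slats' worth of relative density'' is true but insufficient: the problem is the density of a \emph{single} slat at scale $r=\delta$ versus $r=1$.

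For part~(2) your description is also off. The paper simply uses the single plank $\mu$ from Proposition~\ref{wavepacket}: one checks directly that $c_1(\mu)\sim\mu(\mathbb{R}^3)\sim 1$ (the plank is one-dimensional at scales $\delta^{1/2}\le r\le 1$ and sub-one-dimensional below that), and that on a $\delta^{1/2}$-interval of $\theta$ the pushforward $\rho_{\theta\#}\mu$ is concentrated on an interval of length $\sim\delta$ with density $\sim\delta^{-1}$, giving the lower bound $\delta^{3/2-p}$. No train track is needed. Your proposed ``$\delta^{-1}$ translated copies with disjoint $\rho_\theta$-images of length $\sim\delta$'' cannot be right as stated, since a plank with long axis $\gamma(\theta_0)$ has $\rho_{\theta_0}$-image of length $\sim 1$, not $\sim\delta$.
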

\begin{proof} Let $\delta>0$ be small, fix $\theta_0 \in [0, 2\pi)$ and assume that \eqref{hypothesis2} holds. Let $\mu = \mu_{\delta, \theta_0}$ be the measure from the proof of Proposition~\ref{wavepacket}, given by 
\[ \mu(x) = \delta^{-3/2}\psi\left( \langle x, \gamma(\theta_0) \rangle,  \left\langle \delta^{-1/2}x, \sqrt{2} \gamma'(\theta_0) \right\rangle, \left\langle \delta^{-1} x, \sqrt{2} \left(\gamma \times \gamma'\right)(\theta_0) \right\rangle \right), \]
where $\psi$ is a smooth non-negative bump function supported in $B(0,1)$, with $\psi \sim 1$ on $B(0,1/2)$, such that $\int \psi = 1$. Similarly to the proof of Proposition~\ref{wavepacket},
\begin{align} \notag &\int_0^{2\pi} \left\lVert \pi_{\theta \#} \mu \right\rVert_{L^p(\gamma(\theta)^{\perp})}^p \, d\theta \\
\notag &\geq \int_{|\theta-\theta_0| \leq c\delta^{1/2}} \int_{-10^{-3}\delta^{1/2}}^{-10^{-3}\delta^{1/2}} \int_{-10^{-3}\delta}^{10^{-3}\delta} \delta^{-3p/2} \, dx_1 \, dx_2 \, d\theta \\
\label{pause38} &\gtrsim \delta^{2-\frac{3p}{2}}. \end{align}
But $\mu(\mathbb{R}^3) =1$, and
\[ c_2(\mu) \sim \delta^{-1/2}. \]
Moreover, $\mu$ is essentially a sum of indicator functions over a boundedly overlapping family of $\delta$-balls. Hence, if \eqref{hypothesis2} holds with the smaller $\mu(\mathbb{R}^3) c_2(\mu)^{p-1}$ on the right-hand side instead of $c_2(\mu)^{p}$, then
\[ \delta^{2-\frac{3p}{2}} \lesssim \delta^{\frac{1}{2}-\frac{p}{2}- \epsilon}. \]
Letting $\delta \to 0$ gives $2-\frac{3p}{2} \geq \frac{1}{2}-\frac{p}{2}-\epsilon$ or $p \leq 3/2+\epsilon$.  

In order to get the same restriction on $p$ if the larger right-hand side $c_2(\mu)^{p}$ is assumed, let $\nu$ be the sum of translated copies of $\mu$ spacing $\delta^{1/2}$ in the short direction $(\gamma \times \gamma')(\theta_0)$, given by 
\[ \nu(x) = \sum_{|m| \leq 10^{-3} \delta^{-1/2}} \mu\left( x- \delta^{1/2} m \left(\gamma \times \gamma'\right)(\theta_0)  \right). \]
There are $\sim \delta^{-1/2}$ such copies, and the supports of the projections of these translated copies under $\pi_{\theta}$ are pairwise disjoint for $|\theta-\theta_0| \leq c\delta^{1/2}$, for a sufficiently small absolute constant $c$, so the lower bound of $\delta^{2-\frac{3p}{2}}$ from \eqref{pause38} is multiplied by $\delta^{-1/2}$ to get 
\begin{equation} \label{etceq} \int_0^{2\pi} \left\lVert \pi_{\theta \#} \nu \right\rVert_{L^p(\gamma(\theta)^{\perp})}^p \, d\theta \gtrsim \delta^{\frac{3-3p}{2}}. \end{equation}
But $\nu(\mathbb{R}^3) \sim \delta^{-1/2} \sim c_2(\nu)$ (as shown in the proof of Proposition~\ref{traintrackprop}), and $\nu$ is essentially a sum of indicator functions over a boundedly overlapping family of $\delta$-balls.  Hence, by the assumed \eqref{hypothesis2},
\[ \delta^{\frac{3-3p}{2}} \lesssim \delta^{\frac{-p}{2}- \epsilon}. \]
Letting $\delta \to 0$ gives $\frac{3-3p}{2} \geq \frac{-p}{2}-\epsilon$ or $p \leq 3/2+\epsilon$.  This proves the restriction on $p$ for projections onto planes.

It remains to consider the restriction on $p$ for projections onto lines. Let $\mu = \mu_{\delta, \theta_0}$ be the measure from the proof of Proposition~\ref{wavepacket}, given by 
\[ \mu(x) = \delta^{-3/2}\psi\left( \langle x, \gamma(\theta_0) \rangle,  \left\langle \delta^{-1/2}x, \sqrt{2} \gamma'(\theta_0) \right\rangle, \left\langle \delta^{-1} x, \sqrt{2} \left(\gamma \times \gamma'\right)(\theta_0) \right\rangle \right), \]
where $\psi$ is a smooth non-negative bump function supported in $B(0,1)$, with $\psi \sim 1$ on $B(0,1/2)$, such that $\int \psi = 1$. Then $\mu(\mathbb{R}^3) \sim c_1(\mu) \sim 1$, and 
\[ \int_{[\theta_0-\delta^{1/2}, \theta_0 + \delta^{1/2} ] } \int |\rho_{\theta\#} \mu|^{p} \, d\theta \gtrsim \delta^{\frac{3}{2} - p}. \] 
Hence, by the assumed \eqref{hypothesis27},
\[ \delta^{\frac{3}{2} - p} \lesssim \delta^{-\epsilon}. \]
Letting $\delta \to 0$ gives $p \leq 3/2+\epsilon$. This proves the restriction on $p$ for projections onto lines.\end{proof}

For $(x,y,t ) \in \mathbb{R}^3$, define $\ell^*(x,y,t)$ to be the line
\[ \ell^*(x,y,t) = \left(0, x , t-\frac{xy}{2}\right) + L_y, \]
where $L_y$ is the ``light ray'' in the light cone 
\[ \widetilde{\Gamma} := \left\{ \eta \in \mathbb{R}^3 : \eta_2^2 = 2\eta_1\eta_3 \right\}, \]
given by
\[ L_y = \left\{ \lambda \left(1, -y, \frac{y^2}{2} \right) : \lambda \in \mathbb{R} \right\}. \]
The cone $\widetilde{\Gamma}$ is the image of the light cone 
\[ \Gamma := \left\{ \xi \in \mathbb{R}^3 : \xi_3^2 = \xi_1^2 + \xi_2^2 \right\}, \]
 in $\mathbb{R}^3$, under the orthogonal transformation $U(\xi_1, \xi_2,\xi_3)  = (\eta_1, \eta_2, \eta_3)$ given by 
\[ \eta_1 = \frac{\xi_1 + \xi_3}{\sqrt{2}}, \qquad \eta_3 = \frac{-\xi_1+\xi_3}{\sqrt{2}}, \qquad \eta_2 = \xi_2. \]
The restriction of $\ell^*$ to $|y| \leq \sqrt{2}$, followed by $U^*$ (an anti-clockwise rotation by $\pi/4$ in the $(\xi_1, \xi_3)$-plane), parametrises the family of light rays parallel to some $\gamma(\theta)$ with $|\theta| \leq \pi/2$. 

Given $(a,b,c) \in \mathbb{R}^3$, define $\ell(a,b,c)$ to be the horizontal line
\[ \ell(a,b,c) = \left\{ (b,0, c) + s\left(a, 1, b/2 \right) : s \in \mathbb{R} \right\}. \]
The following is the point-line duality principle from \cite{fasslerorponen}. 
\begin{lemma}[{\cite[Lemma~4.11]{fasslerorponen}}] \label{pointlineduality} Let $p \in \mathbb{R}^3$ and $p^* \in \mathbb{H}$. Then 
\[ p \in \ell^*(p^*) \quad \text{ if and only if } \quad p^* \in \ell(p). \] \end{lemma}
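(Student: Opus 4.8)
The statement is a clean algebraic identity, so the plan is simply to write out both conditions explicitly and observe that they reduce to the same equation. Write $p = (x,y,t)$ and $p^* = (a,b,c) \in \mathbb{H}$. First I would unwind the definition of $\ell^*(p^*) = \ell^*(a,b,c)$: by definition this is the line $\left(0,a,c - \frac{ab}{2}\right) + L_b$, where $L_b = \left\{\lambda\left(1,-b,\frac{b^2}{2}\right) : \lambda \in \mathbb{R}\right\}$. So the condition $p \in \ell^*(p^*)$ says there is a $\lambda \in \mathbb{R}$ with
\[
(x,y,t) = \left(\lambda,\ a - \lambda b,\ c - \frac{ab}{2} + \frac{\lambda b^2}{2}\right).
\]
The first coordinate forces $\lambda = x$; substituting, the second coordinate gives $y = a - xb$, i.e.\ $a = y + xb$, and the third gives $t = c - \frac{ab}{2} + \frac{xb^2}{2}$.

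Next I would unwind $p^* \in \ell(p) = \ell(x,y,t)$, which by definition is the line $\left\{(y,0,t) + s\left(x,1,y/2\right) : s \in \mathbb{R}\right\}$. So $p^* = (a,b,c) \in \ell(p)$ says there is an $s \in \mathbb{R}$ with $a = y + sx$, $b = s$, and $c = t + \frac{sy}{2}$. The second equation forces $s = b$; substituting into the first gives $a = y + xb$ — exactly the relation obtained above — and substituting into the third gives $c = t + \frac{by}{2}$.

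It then remains only to check that the third-coordinate relations match, i.e.\ that, given $a = y + xb$, the equation $t = c - \frac{ab}{2} + \frac{xb^2}{2}$ is equivalent to $c = t + \frac{by}{2}$. Indeed, using $a = y + xb$ we get $-\frac{ab}{2} + \frac{xb^2}{2} = -\frac{(y+xb)b}{2} + \frac{xb^2}{2} = -\frac{by}{2}$, so the first relation becomes $t = c - \frac{by}{2}$, which is precisely $c = t + \frac{by}{2}$. Since both conditions ($p \in \ell^*(p^*)$ and $p^* \in \ell(p)$) are equivalent to the pair of equations $a = y + xb$ and $c = t + \frac{by}{2}$, they are equivalent to each other, completing the proof. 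There is no real obstacle here — the only thing to be careful about is bookkeeping the change between the two parametrizations (the parameter $\lambda$ versus $s$, and the slightly different ordering of coordinates in the definitions of $\ell^*$ and $\ell$), and confirming that $p^* \in \mathbb{H}$ imposes no extra constraint beyond what the computation already uses.
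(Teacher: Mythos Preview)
Your proof is correct: the direct algebraic unwinding of both definitions reduces each condition to the same pair of equations $a = y + xb$ and $c = t + \tfrac{by}{2}$, exactly as you show. The paper does not supply its own proof of this lemma---it is quoted from \cite[Lemma~4.11]{fasslerorponen}---so there is nothing further to compare; your verification is the natural one, and your closing remark that $p^* \in \mathbb{H}$ imposes no additional constraint (since $\mathbb{H}$ is $\mathbb{R}^3$ as a set) is correct.
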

The symmetry of the lemma above means that much of the ideas from~\cite{fasslerorponen} for vertical projections in the Heisenberg group can be reversed as in the lemma below. 
\begin{lemma} \label{compmeasure} The measure $\mathfrak{m}$ given by 
\[ \mathfrak{m}(F) = \int_{-\pi/2}^{\pi/2} \mathcal{H}^2\left\{ y \in \gamma(\theta)^{\perp} : \pi_{\theta}^{-1}(y) \in U^*F \right\} \, d\theta, \]
for a Borel set $F$ of light rays parallel to lines in $\widetilde{\Gamma}$, is comparable to the pushforward of the Lebesgue measure on $\mathbb{R}^3 \cap \left\{|y| \leq \sqrt{2}\right\}$ under the map $\ell^*$, meaning that
\begin{equation} \label{comparablemeasure} \left(\ell^*_{\#}\mathcal{H}^3\chi_{|y| \leq \sqrt{2}}\right)(F) \sim \mathfrak{m}(F), \end{equation}
for any Borel set $F$ of light rays parallel to lines in $\widetilde{\Gamma}$. As a consequence, for any non-negative Borel function $f$ on the set of light rays parallel to lines in $\widetilde{\Gamma}$,
\begin{equation} \label{functionversion} \int_{-\pi/2}^{\pi/2} \int_{\gamma(\theta)^{\perp}} f(U \pi_{\theta}^{-1}(y) ) \, d\mathcal{H}^2(y) \, d\theta \sim \int f \, d\left(\ell^*_{\#}\mathcal{H}^3\chi_{|y| \leq \sqrt{2}}  \right). \end{equation} \end{lemma}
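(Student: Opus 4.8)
The plan is to interpret both sides of \eqref{comparablemeasure} as pushforwards of a Lebesgue-type measure under two different parametrisations of the same three-parameter family of light rays---those parallel to a generator of $\widetilde{\Gamma}$ whose direction corresponds to an angle $\theta$ with $|\theta| \leq \pi/2$---and then to verify that passing from one parametrisation to the other is a change of variables whose Jacobian is bounded above and below. The right-hand side of \eqref{comparablemeasure} is $\mathcal{H}^3\{(x,y,t) : |y| \leq \sqrt{2},\ \ell^*(x,y,t) \in F\}$. For the left-hand side, write $\pi_{\theta}^{-1}(w)$ for the line through $w \in \gamma(\theta)^{\perp}$ in direction $\gamma(\theta)$; since $U$ is a linear bijection, $\pi_{\theta}^{-1}(w) \in U^*F$ if and only if $U\pi_{\theta}^{-1}(w) \in F$, so $\mathfrak{m}(F) = \int_{-\pi/2}^{\pi/2} \mathcal{H}^2\{w \in \gamma(\theta)^{\perp} : U\pi_{\theta}^{-1}(w) \in F\}\, d\theta$.

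First I would set up the dictionary between the two direction parameters. As $\gamma(\theta) \in \Gamma$ and $U$ maps $\Gamma$ onto $\widetilde{\Gamma}$, the line $\mathbb{R}\,U\gamma(\theta)$ is parallel to $\bigl(1, -y, y^2/2\bigr)$ precisely when $y = -\sqrt{2}\tan(\theta/2)$, and $\theta \mapsto y$ is a $C^{\infty}$ diffeomorphism of $[-\pi/2, \pi/2]$ onto $[-\sqrt{2}, \sqrt{2}]$ with $|dy/d\theta|$ comparable to $1$ on that range; this is consistent with the description of $\ell^*$ restricted to $|y| \leq \sqrt{2}$ recorded just before the lemma. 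Changing variables from $\theta$ to $y$ in $\mathfrak{m}(F)$ and applying Fubini's theorem on the right-hand side then reduces \eqref{comparablemeasure} to the fibrewise comparison
\[ \mathcal{H}^2\bigl\{w \in \gamma(\theta)^{\perp} : U\pi_{\theta}^{-1}(w) \in F\bigr\} \sim \mathcal{H}^2\bigl\{(x,t) \in \mathbb{R}^2 : \ell^*(x,y,t) \in F\bigr\}, \]
with $\theta$ determined by $y$, and with implicit constants independent of $y \in [-\sqrt{2}, \sqrt{2}]$ and of the Borel set $F$.

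For this comparison, observe that for fixed $y$ both sides parametrise the rays in $F$ parallel to the single direction $v := \bigl(1, -y, y^2/2\bigr)$, which is parallel to $U\gamma(\theta)$: the left side by $w \mapsto Uw + \mathbb{R}v$, the right side by $(x,t) \mapsto (0, x, t - xy/2) + \mathbb{R}v$. Matching these gives the affine map $(x,t) \mapsto w = U^*\bigl(P_{v^{\perp}}(0, x, t - xy/2)\bigr)$, where $P_{v^{\perp}}$ is the orthogonal projection onto $v^{\perp}$. Since $U^*$ is an isometry and $(x,t) \mapsto (0, x, t - xy/2)$ is an area-preserving linear injection with image $\spn(e_2, e_3)$, the Jacobian of $(x,t) \mapsto w$ is the area-distortion of $P_{v^{\perp}}$ restricted to $\spn(e_2, e_3)$, which equals $|\langle e_1, v\rangle|/|v| = \bigl(1 + y^2 + y^4/4\bigr)^{-1/2}$; this lies in $[1/2, 1]$ for $|y| \leq \sqrt{2}$. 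Because the parametrisations are continuous and $F$ is Borel, the two sets in the fibrewise comparison correspond under this affine map of Jacobian comparable to $1$, which proves the fibrewise comparison and hence \eqref{comparablemeasure}. Finally, \eqref{functionversion} is immediate from \eqref{comparablemeasure}: for $f = \chi_F$ the left-hand side of \eqref{functionversion} is exactly $\mathfrak{m}(F)$ (again by $\pi_{\theta}^{-1}(w) \in U^*F \Leftrightarrow U\pi_{\theta}^{-1}(w) \in F$), so the left-hand side of \eqref{functionversion} equals $\int f\, d\mathfrak{m}$ for indicator functions, hence for every non-negative Borel $f$ by linearity and monotone convergence; then \eqref{comparablemeasure} turns this into the asserted comparison with $\int f\, d\bigl(\ell^*_{\#}\mathcal{H}^3\chi_{|y| \leq \sqrt{2}}\bigr)$.

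The whole argument reduces to the two Jacobian computations above, and the one place the hypothesis is genuinely used is in keeping both the one-dimensional factor $|dy/d\theta|$ and the two-dimensional factor $\bigl(1 + y^2 + y^4/4\bigr)^{-1/2}$ inside a compact subset of $(0, \infty)$; this is exactly why the statement is limited to $|y| \leq \sqrt{2}$, that is, $|\theta| \leq \pi/2$. The remaining technicalities---injectivity of $\ell^*$ on $\{|y| \leq \sqrt{2}\}$, measurability of all the sets appearing, and that $\mathfrak{m}$ is a genuine Borel measure---are routine and I would handle them briefly.
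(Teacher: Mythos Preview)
Your proposal is correct and follows essentially the same route as the paper's proof: both reduce \eqref{comparablemeasure} to a one-dimensional change of variable $\theta \leftrightarrow y$ with Jacobian $\sim 1$ and a fibrewise two-dimensional comparison between the $(x,t)$-parametrisation and the orthogonal-projection parametrisation of lines parallel to $(1,-y,y^2/2)$, again with Jacobian $\sim 1$. You supply more explicit detail (the formula $y=-\sqrt{2}\tan(\theta/2)$ and the area factor $(1+y^2+y^4/4)^{-1/2}$) than the paper, which simply writes $\sim$ at the two relevant steps, but the argument is the same.
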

\begin{proof} Let $F$ be given. Let $m$ be the Lebesgue measure on $\mathbb{R}^3 \cap \left\{|y| \leq \sqrt{2}\right\}$. Then 
\begin{align*} (\ell^*_{\#}m)(F) &= m\left\{ (x,y,t) \in \mathbb{R}^3 : |y| \leq \sqrt{2},  \left(0, x , t-\frac{xy}{2}\right) + L_y \in F\right\} \\
&= \int_{-\sqrt{2}}^{\sqrt{2}} \mathcal{H}^2\left\{ (x,t) \in \mathbb{R}^2 : \left(0, x , t-\frac{xy}{2}\right) + \spn(1,-y,y^2/2) \in F \right\} \, dy \\
&\sim \int_{-\sqrt{2}}^{\sqrt{2}} \mathcal{H}^2\left\{ x \in  (1,-y,y^2/2)^{\perp} : \pi_{(1,-y,y^2/2)^{\perp}}^{-1}(x) \in F \right\} \, dy \\
&= \int_{-\sqrt{2}}^{\sqrt{2}} \mathcal{H}^2\left\{ x \in (U^*(1,-y,y^2/2))^{\perp} : \pi_{(U^*(1,-y,y^2/2))^{\perp}}^{-1}(x) \in U^*F \right\} \, dy \\
&\sim \int_{-\pi/2}^{\pi/2} \mathcal{H}^2\left\{ x \in (\cos \theta, \sin \theta, 1)^{\perp} : \pi_{(\cos \theta, \sin \theta, 1)^{\perp}}^{-1}(x) \in U^*F \right\} \, d\theta \\
&= \mathfrak{m}(F). \end{align*}
This proves \eqref{comparablemeasure}, and yields \eqref{functionversion} whenever $f = \chi_F$ for a Borel set $F$ of light rays. The equivalence \eqref{functionversion} for general Borel functions follows by approximating $f$ with simple functions and applying the monotone convergence theorem.    \end{proof}
\begin{proposition}  If, for any $\epsilon >0$, 
\begin{equation} \label{kakeya} \left\lVert  \sum_{T \in \mathbb{T} } \chi_T \right\rVert_{L^p(B(0,1))} ^p \leq C_{\epsilon} \delta^{-\epsilon}, \end{equation}
for any family $\mathbb{T}$ of $1 \times \delta \times \delta$ $SL_2$ tubes with $\delta$-separated directions, then $p \leq 3/2$. 
\end{proposition}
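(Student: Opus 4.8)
The plan is to recognise \eqref{kakeya}, via the point-line duality of Lemmas~\ref{pointlineduality} and~\ref{compmeasure}, as essentially a special case of the restricted $L^p$ projection estimate \eqref{hypothesis2}, and then to quote the ``train track of planks'' counterexample behind Proposition~\ref{wavepacket2}(1) --- in the convenient variant where the planks are stacked into a $\delta$-neighbourhood of a square in a ``light plane'', so that the dual Kakeya configuration is automatically direction-separated. This will force $p\le 3/2+\epsilon$ for every $\epsilon>0$, hence $p\le 3/2$.

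Fix a small $\delta>0$, take $\theta_0=0$, and put $\Pi=\gamma(\pi)^{\perp}$, which contains both $\gamma(0)$ and $\gamma'(0)$ (since $\gamma(\pi)\cdot\gamma(0)=\gamma(\pi)\cdot\gamma'(0)=0$); let $\nu=\delta^{-3/2}\chi_S$, where $S$ is the $\delta$-neighbourhood of a unit square centred at the origin inside $\Pi$. Up to a bounded factor, $\nu=\frac{a}{\delta^3}\sum_{B\in\mathcal B}\chi_B$ with $a\sim\delta^{3/2}$, where $\mathcal B$ is a family of $\sim\delta^{-2}$ boundedly overlapping $\delta$-balls in $B_3(0,1)$. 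To each $B\in\mathcal B$ with centre $p_B\approx(u,v,u)$ attach the horizontal tube $T_B\subseteq B(0,C)$ of length $\sim1$ and radius $\sim\delta$ around $\ell(p_B)$; its direction is proportional to $(u,1,v/2)$, which has middle coordinate $1$ and so is bounded away from $(0,0,\pm1)$. Because $\Pi$ is a graph over the $(x,y)$-plane, the $(x,y)$-projections of the centres $p_B$ form a $\delta$-separated grid in the unit square, so --- after discarding the $O(1)$ redundant tubes sharing each $(x,y)$-value --- the family $\{T_B\}$ has $\delta$-separated directions, hence is admissible for \eqref{kakeya}.

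By Lemma~\ref{pointlineduality}, on the compact parameter region at hand the incidence $p_B\in\ell^*(\xi)$ is comparable to $\xi\in T_B$, so $\int_{\ell^*(\xi)}\nu\,d\mathcal H^1\sim\frac{a}{\delta^2}\sum_{B}\chi_{T_B}(\xi)$. Inserting $f(r)=\bigl(\int_r\nu\bigr)^p$ into \eqref{functionversion} of Lemma~\ref{compmeasure} (the lower bound below is supported in $|\theta|\le c\delta^{1/2}\subseteq(-\pi/2,\pi/2)$, the range that lemma covers) gives
\[ \int_{-\pi/2}^{\pi/2}\bigl\|\pi_{\theta\#}\nu\bigr\|_{L^p(\gamma(\theta)^{\perp})}^{p}\,d\theta\ \sim\ \frac{a^{p}}{\delta^{2p}}\int_{B(0,C)}\Bigl(\sum_{B}\chi_{T_B}\Bigr)^{p}\ \sim\ \delta^{-p/2}\int_{B(0,C)}\Bigl(\sum_{B}\chi_{T_B}\Bigr)^{p}. \]
A direct computation, exactly as in the proof of Proposition~\ref{wavepacket2}(1) --- a line parallel to $\gamma(\theta)$ with $|\theta|\lesssim\delta^{1/2}$ drifts off $\Pi$ by only $\lesssim\delta$ along a unit length, so $\pi_{\theta\#}\nu\sim\delta^{-3/2}$ on a set of area $\sim\delta$ for each such $\theta$ --- gives the unconditional lower bound $\int_{-\pi/2}^{\pi/2}\|\pi_{\theta\#}\nu\|_{L^p}^{p}\,d\theta\gtrsim\delta^{(3-3p)/2}$, equivalently $\int_{B(0,C)}(\sum_B\chi_{T_B})^p\gtrsim\delta^{(3-2p)/2}$ (and one checks $c_2(\nu)\sim\delta^{-1/2}$, so this is literally the content of Proposition~\ref{wavepacket2}(1) read through the duality). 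But \eqref{kakeya}, applied on the $O(1)$ unit balls covering $B(0,C)$, forces $\int_{B(0,C)}(\sum_B\chi_{T_B})^p\lesssim_{\epsilon}\delta^{-\epsilon}$. Hence $\delta^{(3-2p)/2}\lesssim_{\epsilon}\delta^{-\epsilon}$ for all small $\delta$, and letting $\delta\to0$ yields $(3-2p)/2\ge-\epsilon$, i.e.\ $p\le 3/2+\epsilon$; as $\epsilon>0$ was arbitrary, $p\le3/2$.

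I expect the main work to be in making the transfer precise rather than in any new idea: one must check that the correspondence of Lemma~\ref{pointlineduality} is bi-Lipschitz on the compact parameter set in play, so that the $\delta$-thickened incidences ``$p_B\in\ell^*(\xi)$'' and ``$\xi\in T_B$'' are genuinely comparable with matching thickenings; track the normalising constant $a^p/\delta^{2p}\sim\delta^{-p/2}$ through \eqref{functionversion}; and verify the two geometric facts used above, namely the $\delta$-separation of $\{T_B\}$ (which is where the light-plane variant, rather than the comb of planks in Proposition~\ref{wavepacket2}(1), is convenient) and the lower bound $\int\|\pi_{\theta\#}\nu\|_p^p\,d\theta\gtrsim\delta^{(3-3p)/2}$ for that example. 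The normalisation factor $\delta^{-p/2}$ equals $c_2(\nu)^p$ up to $\delta^{\epsilon}$, which is exactly why \eqref{kakeya} for $\{T_B\}$ coincides with \eqref{hypothesis2} for $\nu$ and the sharp exponent $3/2$ is inherited.
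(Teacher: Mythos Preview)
Your overall strategy coincides with the paper's: transfer the assumed Kakeya inequality, via the point--line duality of Lemmas~\ref{pointlineduality} and~\ref{compmeasure}, to a restricted projection inequality, and feed in a counterexample measure that forces $p\le 3/2+\epsilon$. The difference is in the choice of counterexample. The paper uses the ``train track of planks'' from Proposition~\ref{wavepacket2} (with $\theta_0=\pi/2$, density renormalised to $\delta^{-1}$), and then spends most of the proof on an explicit coordinate computation verifying that the dual tubes $\ell(UB)$ for balls in different planks, and for separated balls in the same plank, have $\sim\delta$-separated directions. You instead take the $\delta$-slab $S$ around a unit square inside a light plane $\Pi=\gamma(\pi)^\perp$. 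Since $\Pi$ is a graph over the $(x,y)$-plane, the $(a,b)$-coordinates of the ball centres already form a $\delta$-grid, and the direction $(a,1,b/2)$ of $\ell$ is bi-Lipschitz in $(a,b)$ on this compact set; so direction separation is immediate. The lower bound $\int\|\pi_{\theta\#}\nu\|_p^p\gtrsim\delta^{(3-3p)/2}$ and the Frostman check $c_2(\nu)\sim\nu(\mathbb{R}^3)\sim\delta^{-1/2}$ are both easier for a solid slab than for a comb. So your route genuinely shortens the paper's argument at the cost of introducing a new (but simpler) example rather than reusing Proposition~\ref{wavepacket2} verbatim.

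One bookkeeping point to fix when you flesh this out: in Lemma~\ref{compmeasure} the line $U\pi_\theta^{-1}(y)$, not $\pi_\theta^{-1}(y)$, is fed into $f$, so inserting $f(r)=(\int_r\nu)^p$ gives $\int\|\pi_{\theta\#}(U^{-1}_\#\nu)\|_p^p\,d\theta$ on the left, not $\int\|\pi_{\theta\#}\nu\|_p^p\,d\theta$. Equivalently, the dual tubes should be around $\ell(Up_B)$ rather than $\ell(p_B)$ --- this is exactly what the paper writes. For your plane $\Pi=\{z=x\}$ one has $Up_B=(\sqrt{2}u,v,0)$, so the directions $(\sqrt{2}u,1,v/2)$ are still $\delta$-separated and nothing changes; but the formulas as written are off by this rotation.
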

\begin{proof} It is first shown that if, for any $\epsilon >0$, \eqref{kakeya} holds for any family $\mathbb{T}$ of $1 \times \delta \times \delta$ $SL_2$ tubes satisfying the \emph{weaker} property that \[ |\{ T \in \mathbb{T} : T \subseteq S \} | \leq (r/ \delta)^2, \]
for any tube $S$ of length 2 and radius $r$, and any $r \in [\delta, 1]$, then $p \leq 3/2$. 

Let $\delta>0$ be small and choose $\theta_0 = \pi/2$. Let $\mu = \mu_{\delta, \theta_0}$ be a $\delta^{1/2}$-multiple of the (slightly modified) measure from the proof of Proposition~\ref{wavepacket}, given by 
\[ \mu(x) = \delta^{-1}\psi\left( \langle x, \gamma(\theta_0) \rangle,  \left\langle \delta^{-1/2}x, \sqrt{2} \gamma'(\theta_0) \right\rangle, \left\langle \delta^{-1} x, \sqrt{2} \left(\gamma \times \gamma'\right)(\theta_0) \right\rangle \right), \]
where $\psi$ is a smooth non-negative bump function supported in $B(0,c)$, with $\psi \sim 1$ on $B(0,c/2)$, such that $\int \psi = 1$ (here $c$ is a small absolute constant to be chosen; this is the only difference from the proof of Proposition~\ref{wavepacket} where $c=1$). As in the proof of Proposition~\ref{wavepacket2}, let $\nu$ be the sum of translated copies of $\mu$ spacing $\delta^{1/2}$ in the short direction $(\gamma \times \gamma')(\theta_0)$, given by 
\[ \nu(x) = \sum_{|m| \leq c \delta^{-1/2}} \mu\left( x- \delta^{1/2} m \left(\gamma \times \gamma'\right)(\theta_0)  \right). \]
Then $\nu$ is supported in a ball around the origin of radius $\sim c$, and $\nu$ is very similar to a $\delta^{1/2}$-multiple of the measure from the proof of Proposition~\ref{wavepacket2}, so by a similar argument to \eqref{etceq},
\[ \delta^{\frac{3}{2}-p} \lesssim \int_{-\pi/2}^{\pi/2}  \left\lVert \pi_{\theta \#} \nu \right\rVert_{L^p(\gamma(\theta)^{\perp})}^p \, d\theta. \]
 Let $\mathcal{B}$ be a family of $\delta$-balls, such that the centres of the balls in $\mathcal{B}$ form a maximal $\delta$-separated subset of the support of $\nu$. Then 
\[ \nu \leq  \sum_{B \in \mathcal{B}} \delta^{-1} \chi_B. \]
Hence, by Lemma~\ref{compmeasure},
\begin{align}\notag \int_{-\pi/2}^{\pi/2}  \left\lVert \pi_{\theta \#} \nu \right\rVert_{L^p(\gamma(\theta)^{\perp})}^p \, d\theta  &= \int_{-\pi/2}^{\pi/2} \int_{\gamma(\theta)^{\perp}} \left\lvert \int_{\pi_{\theta}^{-1}(y) } \nu \, d\mathcal{H}^1 \right\rvert^p \, d\mathcal{H}^2(y) \, d\theta\\
\notag &= \int_{-\pi/2}^{\pi/2} \int_{\gamma(\theta)^{\perp}} \left\lvert \int_{U\pi_{\theta}^{-1}(y) } (U_{\#}\nu) \, d\mathcal{H}^1 \right\rvert^p \, d\mathcal{H}^2(y) \, d\theta\\
\notag &\sim \int \left\lvert \int_L U_{\#}\nu \, d\mathcal{H}^1 \right\rvert^p \,  d(\ell^*_{\#}\mathcal{H}^3)(L) \\
\label{presimplify} &\lesssim \int \left\lvert \left\{ B \in \mathcal{B} : UB \cap L \neq \emptyset \right\} \right\rvert^p \,  d(\ell^*_{\#}\mathcal{H}^3)(L) \\
\label{simplify} &= \int_{|p_2^*| \leq \sqrt{2}} \left\lvert \left\{ B \in \mathcal{B} : UB \cap \ell^*(p^*) \neq \emptyset \right\} \right\rvert^p \,  d\mathcal{H}^3(p^*). \end{align}
If $|p_2^*| \leq \sqrt{2}$ and $\ell^*(p^*)$ intersects $B(0,c)$, then $|p^*| \leq 2$ (provided $c$ is now chosen sufficiently small; this follows easily from the definition of $\ell^*$). Hence 
\begin{equation} \label{simplified} \eqref{simplify} \leq  \int_{|p^*| \leq 2} \left\lvert \left\{ B \in \mathcal{B} : UB \cap \ell^*(p^*) \neq \emptyset \right\} \right\rvert^p \,  d\mathcal{H}^3(p^*). \end{equation}
By Lemma~\ref{pointlineduality} (point-line duality), 
\[ \eqref{simplified} \leq \int_{|p^*| \leq 2} \left\lvert \sum_{B \in \mathcal{B}} \chi_{\ell(UB)}(p^*) \right\rvert^p \,  d\mathcal{H}^3(p^*). \]
The measure $\nu$ satisfies $c_2(\nu) \lesssim 1$, and similarly so does the measure $\sum_{B \in \mathcal{B}} \delta^{-1} \chi_B$ (which is roughly the same as $\nu$), which means that for any $\delta \leq r \leq 1$, the number of $\delta$-balls from $\mathcal{B}$ intersecting any $r$-ball is $\lesssim (r/\delta)^2$. By the formula for $\ell$, this implies that for any $r \in [\delta, 1]$ and any tube $T$ of dimensions $2 \times r \times r$, 
\[ \left\lvert\left\{ \ell(UB) : B \in \mathcal{B}, \ell(UB) \subseteq T \right\} \right\rvert \lesssim (r/\delta)^2. \]
Therefore, if \eqref{kakeya} holds for any family of $SL_2$ tubes satisfying a 2-dimensional ball condition, then
\[ \delta^{\frac{3}{2}-p} \lesssim \int_{|p^*| \leq 2} \left\lvert \sum_{B \in \mathcal{B}} \chi_{\ell(UB)}(p^*) \right\rvert^p \,  d\mathcal{H}^3(p^*) \leq C_{\epsilon} \delta^{-\epsilon}. \]
This yields $p \leq 3/2 + \epsilon$. 

To get the same restriction on $p$ when \eqref{kakeya} holds only for those sets $\mathbb{T}$ of $1 \times \delta \times \delta$ $SL_2$ tubes with $\delta$-separated directions, it suffices to show that the collection of tubes $\ell(UB)$ have $\sim \delta$-separated directions, for any balls separated by $100\delta$. Let $B_1, B_2 \in \mathcal{B}$ be $100\delta$-separated balls in $\mathcal{B}$. Let $(a_j,b_j,c_j)$ be the centre of $UB_j$, where $j \in \{1,2\}$. The direction of $\ell(UB_j)$ is parallel to $(a_j, 1, b_j/2)$, so it is required to show that 
\[ |a_1-a_2| + |b_1-b_2| \gtrsim \delta. \]
Write
\[ B_2 = B_1 + \lambda_1 \gamma(\pi/2) + \lambda_2 \gamma'(\pi/2) + \lambda_3 \left( \gamma \times \gamma'\right)(\pi/2), \]
where $|\lambda_3| \leq 1$, $|\lambda_2| \leq c \delta^{1/2}$ and $|\lambda_1| \leq 1$, and $|(\lambda_1, \lambda_2, \lambda_3)| \geq 100\delta$. This can be written as 
\[ B_2 = B_1 + \frac{\lambda_1}{\sqrt{2}}( 0, 1, 1) + \frac{\lambda_2}{\sqrt{2}} (-1, 0, 0) + \frac{\lambda_3}{2} (0, -1, 1 ). \]
Hence 
\[  UB_2 = UB_1 + \frac{\lambda_1}{\sqrt{2}}\left( \frac{1}{\sqrt{2}}, 1, \frac{1}{\sqrt{2}}\right) + \frac{\lambda_2}{\sqrt{2}}\left( \frac{-1}{\sqrt{2}}, 0, \frac{1}{\sqrt{2}}\right) + \frac{\lambda_3}{2}\left( \frac{1}{\sqrt{2}}, -1, \frac{1}{\sqrt{2}}\right). \]
It follows that 
\[ (a_2,b_2) = (a_1,b_1) + \left( \frac{\lambda_1}{2} - \frac{\lambda_2}{2} + \frac{\lambda_3}{2\sqrt{2} }, \frac{\lambda_1}{\sqrt{2}} - \frac{\lambda_3}{2} \right). \]
By the triangle inequality,
\begin{align*}  \left\lvert \frac{\lambda_1}{2} - \frac{\lambda_2}{2} + \frac{\lambda_3}{2\sqrt{2} } \right\rvert &\geq \left\lvert \frac{\lambda_3}{\sqrt{2}}  - \frac{\lambda_2}{2} \right\rvert - \frac{1}{\sqrt{2}} \left\lvert \frac{\lambda_1}{\sqrt{2}}  - \frac{\lambda_3}{2}\right\rvert. \end{align*}
Hence 
\[ |a_1-a_2| + \frac{|b_1-b_2|}{\sqrt{2}} \geq \left\lvert \frac{\lambda_3}{\sqrt{2}} - \frac{\lambda_2}{2} \right\rvert. \]
If $B_1$ and $B_2$ lie in different planks, then $|\lambda_3| \geq \delta^{1/2}$, and since $|\lambda_2| \leq c \delta^{1/2}$, this yields 
\[ |a_1-a_2| + |b_1-b_2| \gtrsim \delta^{1/2}. \]
It remains to consider the case where $B_1$ and $B_2$ lie in the same plank. In this case, $|\lambda_3| \leq \delta$. If $|\lambda_1| \geq 10\delta$, then $|b_1-b_2| \sim \left\lvert \frac{\lambda_1}{\sqrt{2}}-\frac{\lambda_3}{2}\right\rvert \gtrsim \delta$. Otherwise $|\lambda_1| < 10\delta$ and $|\lambda_3| \leq \delta$ imply that $|\lambda_2| \geq 50\delta$, and this gives $|a_1-a_2| = \left\lvert \frac{\lambda_1}{2} - \frac{\lambda_2}{2} + \frac{\lambda_3}{2\sqrt{2} } \right\rvert \gtrsim \delta$. This shows that the directions of $100\delta$-separated balls in $\mathcal{B}$ are $\sim \delta$-separated. It follows that if \eqref{kakeya} holds for any family $\mathbb{T}$ of $1 \times \delta \times \delta$ $SL_2$ tubes with $\delta$-separated directions, then it can be applied to the family $\{ \ell(UB) : B \in \mathcal{B} \}$, and therefore
\[ \delta^{\frac{3}{2}-p} \lesssim \int_{|p^*| \leq 2} \left\lvert \sum_{B \in \mathcal{B}} \chi_{\ell(UB)}(p^*) \right\rvert^p \,  d\mathcal{H}^3(p^*) \leq C_{\epsilon} \delta^{-\epsilon}. \]
This yields $p \leq 3/2 + \epsilon$. Letting $\epsilon \to 0$ gives $p \leq 3/2$. 
  \end{proof}
	
	\section{An \texorpdfstring{$L^{4/3}$}{L4/3} inequality for projections onto planes} \label{Lpbounds}
	
Consider the light cone $\{\xi \in \mathbb{R}^3 : |(\xi_1,\xi_2)| = |\xi_3| \}$. The set $\{ \xi \in \mathbb{R}^3 : 2^j \gtrsim |(\xi_1,\xi_2)| \geq |\xi_3| \sim 2^j \}$ will be broken into dyadic conical shells according to the distance to the cone. If this set is rescaled by $2^{-j}$, then this is equivalent to dividing the $\sim 1$ neighbourhood of the truncated light cone $|\xi_3| = |(\xi_1, \xi_2) |$ in $B(0,100) \setminus B(0,1/100)$ into dyadic conical shells according to the distance to the cone. The purpose of this decomposition is that on each dyadic conical shell, the change of variables used in what follows will have constant Jacobian on that shell.  The part with distance $\sim 2^{j-k}$ from the cone (before scaling), where $0 \leq k < j$, is roughly a scaling of the $\sim 2^{-k}$-neighbourhood of the truncated light cone (though the outer part, separated by a distance $\sim 2^{-k}$ from the cone) by $2^j$. The scaled down neighbourhood can be covered by a finitely overlapping collection of boxes similar to the cover used for the $2^{-k}$-neighbourhood of the truncated light cone; of dimensions $\sim 1 \times 2^{-k/2} \times 2^{-k}$, and this covering can then be scaled up. If $k=j$ then the shell with distance $\lesssim 1$ from the cone (inside $B(0, 100 \cdot 2^j ) \setminus B(0, 2^j/100)$) is  a rescaling of the $\sim 2^{-j}$-neighbourhood of the truncated light cone. This neighbourhood can be covered by the standard $\lesssim 1$ overlapping cover of the $2^{-j}$-neighbourhood of the truncated light cone by boxes of dimensions $1 \times 2^{-j/2} \times 2^{-j}$, and this covering can then be scaled up. 

The decomposition will be made a bit more precise. For each $\theta \in [0, 2\pi)$ and $0 \leq k < j$, let 
\begin{multline*} \tau(\theta,j,k) =\\
 \{ \lambda_1 (\gamma \times \gamma')(\theta) + \lambda_2 \gamma'(\theta) + \lambda_3 \gamma(\theta) : |\lambda_1| \sim 2^j, |\lambda_2| \lesssim 2^{j - k/2}, |\lambda_3| \sim 2^{j-k} \}, \end{multline*}
and for $k=j$ let 
\[ \tau(\theta,j,j) = \{ \lambda_1 (\gamma \times \gamma')(\theta) + \lambda_2 \gamma'(\theta) + \lambda_3 \gamma(\theta) : |\lambda_1| \sim 2^j, |\lambda_2| \lesssim 2^{j/2}, |\lambda_3| \lesssim 1 \}. \]
For each $k$, choose a maximal $\sim 2^{-k/2}$-separated set $\Theta_k$ of $[0, 2\pi)$, such that the sets $\Lambda_{j,k} = \{ \tau(\theta,j, k) : \theta \in \Theta_k \}$ form a $\lesssim 1$ overlapping cover of the $\sim 2^{j-k}$ outer neighbourhood of the light cone $\{ (\xi, |\xi|) : |\xi| \sim 2^j \}$. The set $ \Lambda = \bigcup_{j \geq 1} \bigcup_{0 \leq k \leq j} \Lambda_{j,k}$ forms a boundedly overlapping cover of the set $\{ \xi \in \mathbb{R}^3 : 1 \lesssim |\xi_3 | \leq |(\xi_1, \xi_2)| \lesssim |\xi| \}$. Let $\{ \psi_{\tau}\}_{\tau \in \Lambda }$ be a smooth partition of unity such that each $\psi_{\tau}$ is supported in $\tau$ and such that the $\psi_{\tau}$'s sum to 1 on the set $\{ \xi \in \mathbb{R}^3 : 1 \lesssim |\xi_3 | \leq |(\xi_1, \xi_2)| \lesssim |\xi| \}$. It may also be assumed that the derivatives of $\psi_{\tau}$ are of the expected size (depending on the dimensions of $\tau$). Given $\delta>0$ (which may be thought of as vanishingly small), for each $\tau$ cover $B(0,1)$ by a boundedly overlapping collection of (rescaled) planks $T \in \mathbb{T}_{\tau}$ of dimensions $2^{k\delta -(j-k)} \times 2^{k (-1/2+\delta) -(j-k)} \times 2^{k(-1 + \delta) -(j-k)}$ dual to $\tau$ (meaning that the long direction of $T$ is the short direction of $\tau$, and vice-versa). Given a measure $\mu$ on $B(0,1)$ and a plank $T$, define 
\[ M_T \mu = \eta_T \left( \mu \ast \widecheck{\psi_{\tau} } \right). \]

Lemma~\ref{energylemma} below states that the projections of measures of dimension greater than 2 have small $L^{4/3}$ norms on small unions of rectangles, on average. The main use of Lemma~\ref{energylemma} will be in proving that pushforwards of measures of dimension greater than 2 are almost surely in $L^{4/3}$; by splitting $\mu$ into ``good'' and ``bad'' parts, and bounding the $L^{4/3}$ norm of the projections of the ``bad'' part of a measure by the $L^{4/3}$ norm of the projections of the original measure on small unions of rectangles. 	

Let $\phi$ be a smooth bump function equal to 1 on $B_3(0,1)$ and vanishing outside $B_3(0,2)$. For each $R >0$ let $\phi_R(x) = R^3\phi(Rx)$. 
	
	\begin{lemma}  \label{energylemma} Let $\alpha > 2$ and let $p=4/3$. For any $\beta >0$, there exists an $\varepsilon>0$ (depending on $\alpha$ and $\beta$) such that 
\begin{equation} \label{geometric} \int_0^{2\pi} \int_{\bigcup_{D \in \mathbb{D}_{\theta}}} \left\lvert \pi_{\theta \#}(\mu \ast \phi_R)\right\rvert^p \, d\mathcal{H}^2 \, d\theta  \leq C_{\alpha,\beta} R^{-\varepsilon} \mu(\mathbb{R}^3) c_{\alpha}(\mu)^{p-1}, \end{equation}
for any $R > 1$, for any Borel measure $\mu$ on $B_3(0,1)$ with $c_{\alpha}(\mu) < \infty$, and for any family of sets $\{ \mathbb{D}_{\theta} \}$, where each $\mathbb{D}_{\theta}$ is a set of rectangles of dimensions $R^{-1/2} \times R^{-1}$ in $\gamma(\theta)^{\perp}$ with long direction parallel to $\gamma'(\theta)$ and short direction parallel to $(\gamma \times \gamma')(\theta)$, such that each $\mathbb{D}_{\theta}$ has cardinality $\lvert\mathbb{D}_{\theta}\rvert \leq R^{3/2-\beta} \mu(\mathbb{R}^3)c_{\alpha}(\mu)^{-1}$, and such that the integrand in \eqref{geometric} is Borel measurable. \end{lemma}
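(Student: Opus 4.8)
The plan is to decompose $\mu\ast\phi_R$ into the conical wave packets set up above and to run a microlocal interpolation between an $L^2$ conical‑decay estimate and the trivial $L^1$ bound, the improvement over the easy range of exponents coming from the hypothesised bound on $|\mathbb D_\theta|$. First the reductions: since $\widehat{\mu\ast\phi_R}$ is supported in $\{|\xi|\lesssim R\}$, decompose it dyadically into pieces with $|\xi|\sim 2^j$, $1\lesssim 2^j\lesssim R$. The piece with $|\xi|\lesssim 1$ is $\mu$ convolved with a fixed bump at scale $1$, hence $\lesssim\mu(\mathbb R^3)$ pointwise, and its $L^{4/3}$ contribution over $\bigcup_{D\in\mathbb D_\theta}D$ is $\lesssim\mu(\mathbb R^3)^{4/3}\int_0^{2\pi}|\bigcup_{D\in\mathbb D_\theta}D|\,d\theta\lesssim R^{-\beta}\mu(\mathbb R^3)^{4/3}c_\alpha(\mu)^{-1}$, acceptable because $\mu(\mathbb R^3)\lesssim c_\alpha(\mu)$. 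For each $j$, the portion of $\widehat{\mu\ast\phi_R}$ supported where $|\xi_3|\ge|(\xi_1,\xi_2)|$ has $\pi_{\theta\#}$‑pushforward identically zero, since no such frequency lies on any plane $\gamma(\theta)^{\perp}$; so only the region $1\lesssim|\xi_3|\le|(\xi_1,\xi_2)|\sim 2^j$ matters, and a triangle inequality in $j$ (costing $\lesssim\log R$) together with the monotonicity in $j$ of $\int_{|\xi|\le 2^j}|\widehat{\mu\ast\phi_R}|^2\lesssim 2^{j(3-\alpha)}c_\alpha(\mu)\mu(\mathbb R^3)$ (valid since $\alpha<3$) reduces everything to the single scale $2^j\sim R$.

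Split the remaining function into the conical shells $\tau\in\Lambda_{j,k}$, $0\le k\le j$, and each shell piece into cone wave packets. A second‑order Taylor expansion (as in the definition of $\tau(\theta,j,k)$) shows that for $\theta$ in the $\sim 2^{-k/2}$ angular window of $\tau$ the pushforward of a wave packet is concentrated, with rapidly decaying tails, on a single $\sim R^{-1/2}\times R^{-1}$ rectangle having the orientation from $\mathbb D_\theta$, while for $\theta$ outside every relevant window it is negligible by non‑stationary phase. Hence on $\bigcup_{D\in\mathbb D_\theta}D$ one discards all wave packets whose rectangle misses $\bigcup_{D\in\mathbb D_\theta}D$; grouping the survivors into the $\sim 1\times R^{-1}2^{k/2}\times R^{-1}$ ``columns'' over the rectangles (inside which the column pushforwards have essentially disjoint supports), the remaining count of columns is controlled in terms of $|\mathbb D_\theta|$ — this is where the cardinality hypothesis enters. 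Taking a triangle inequality over the $\lesssim(\log R)^2$ pairs $(j,k)$, it remains to bound, for a fixed such pair,
\[
\int_{0}^{2\pi}\ \sum_{\substack{\tau\in\Lambda_{j,k}\\ \theta\in\mathrm{window}(\tau)}}\ \sum_{\substack{\text{columns }c\\ D_c(\theta)\cap\bigcup_{D\in\mathbb D_\theta}D\neq\emptyset}} \bigl\| \pi_{\theta\#}M_c(\mu\ast\phi_R)\bigr\|_{L^{4/3}}^{4/3}\,d\theta ,
\]
where $M_c(\mu\ast\phi_R)$ denotes the sum of the wave packets of $\mu\ast\phi_R$ lying in the column $c$.

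For one column interpolate $\|\pi_{\theta\#}M_c(\mu\ast\phi_R)\|_{L^1}\le\|M_c(\mu\ast\phi_R)\|_1$ against an $L^2$ estimate obtained from Plancherel along the cone and Cauchy–Schwarz along the $\pi_\theta$‑fibre, so that $\|\pi_{\theta\#}M_c(\mu\ast\phi_R)\|_{4/3}^{4/3}$ is controlled by $\|M_c(\mu\ast\phi_R)\|_1$, $\|M_c(\mu\ast\phi_R)\|_2$ and the appropriate powers of $R$ and $2^k$. One then applies Hölder in the columns, using the almost‑orthogonality $\sum_c\|M_c(\mu\ast\phi_R)\|_2^2\lesssim\int_\tau|\widehat{\mu\ast\phi_R}|^2$, the mollified density bound $\|M_c(\mu\ast\phi_R)\|_1\lesssim\min\bigl(\mu(\mathbb R^3),\,c_\alpha(\mu)R^{1-\alpha}2^{k/2}\bigr)$, and the bound on the number of surviving columns; and finally Hölder in $\theta$, using $\int_0^{2\pi}|\mathbb D_\theta|\,d\theta\lesssim R^{3/2-\beta}\mu(\mathbb R^3)c_\alpha(\mu)^{-1}$ together with the conical‑decay identity
\[
\int_{0}^{2\pi}\ \sum_{\substack{\tau\in\Lambda_{j,k}\\ \theta\in\mathrm{window}(\tau)}}\int_\tau|\widehat{\mu\ast\phi_R}|^2\,d\theta\ \sim\ 2^{-k/2}\!\int_{\mathrm{shell}(j,k)}\!|\widehat{\mu\ast\phi_R}|^2\ \lesssim\ 2^{-k/2}R^{\,3-\alpha}c_\alpha(\mu)\mu(\mathbb R^3),
\]
in which the factor $2^{-k/2}$ is the angular window length and reflects the curvature of the cone (this is the quantitative form of the Oberlin–Oberlin/Erdo\u{g}an input). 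Choosing the Hölder exponents and the branch of the minimum defining $\|M_c(\mu\ast\phi_R)\|_1$ according to the size of $2^k$ (noting that for small $k$ the cardinality hypothesis already forces $\mathbb D_\theta=\emptyset$), the $2^k$‑powers organise into a convergent geometric series, the sums over $k$ and $j$ cost only $R^{o(1)}$, and the net power of $R$ is negative for every $\alpha>2$; one then takes $\varepsilon$ smaller than this gap and than $\beta$.

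The delicate point — the main obstacle — is this last step: a \emph{global} interpolation between the Erdo\u{g}an‑type $L^2$ estimate (which is sharp precisely at $\alpha=5/2$, by the Knapp ``planks'' of Section~\ref{planktrain}) and the $L^1$ bound only yields the range $\alpha>5/2$. To reach every $\alpha>2$ one must interpolate \emph{microlocally}, block by block in the cone decomposition (cf.\ the ``microlocal interpolation'' exercise in Demeter's book), so as to exploit that each cone wave packet's pushforward lives on a single rectangle, and one must balance against one another the three gains that are available: the curvature gain $2^{-k/2}$, the mollified‑density gain in $\|M_c(\mu\ast\phi_R)\|_1$, and the gain $R^{-\beta}$ furnished by the hypothesis on $|\mathbb D_\theta|$. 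Carefully tracking the powers of $2^k$ across the conical shells — and identifying which shells are annihilated outright by the cardinality hypothesis — is the technical heart of the argument. The ancillary points (the wave‑packet tails, and the Borel measurability used to justify the manipulations) are routine.
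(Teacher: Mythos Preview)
Your outline diverges from the paper's proof in a structural way, and I believe the divergence is fatal rather than merely stylistic.

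The paper's argument is a \emph{bootstrap on $\beta$}: one sets $\beta_{\inf}=\inf\{\beta>0:\text{lemma holds}\}$, supposes $\beta_{\inf}>0$, fixes $\beta_0\in(\beta_{\inf},2\beta_{\inf})$, and proves the lemma for every $\beta>c\beta_0$ with $c<1$ close enough to $1$. The mechanism is a good/bad split on the planks $T\in\mathbb T_\tau$: a plank is \emph{bad} when $\mu(4T)\ge 2^{-k(3/2-\beta_0)-(j-k)\alpha}c_\alpha(\mu)$. For the bad part one localises to balls $B_m$ of radius $2^{-(j-k)}$, rescales each $B_m$ to unit size, and observes that the rescaled bad planks become $2^{-k/2}\times 2^{-k}$ rectangles whose cardinality bound is \emph{exactly} the hypothesis of the lemma at scale $2^k$ with parameter $\beta_0$ --- so one applies the lemma to itself. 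For the good part one interpolates to $L^2$ and invokes the refined decoupling estimate from \cite{GGGHMW} with $p_{\dec}=4$; the threshold in the definition of ``bad'' is precisely what makes the good-part numerology close when $(1-c)\beta_0<\tfrac{\alpha-2}{100}$.

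Your proposal has no induction and no good/bad split. You try to run the interpolation directly, column by column, and assert that ``the $2^k$-powers organise into a convergent geometric series \dots\ the net power of $R$ is negative for every $\alpha>2$''. But this is exactly the step that fails without the bootstrap. Carrying out your scheme on the $k=j$ shell (planks $1\times R^{-1/2}\times R^{-1}$), the microlocal interpolation $\|\pi_\theta M_c\mu\|_{4/3}^{4/3}\le\|M_c\mu\|_1^{2/3}\|M_c\mu\|_2^{2/3}$ followed by H\"older over columns and then over $\tau$ gives a bound $\lesssim R^{(5-2\alpha)/6}\mu(\mathbb R^3)c_\alpha(\mu)^{1/3}$, i.e.\ the range $\alpha>5/2$ you yourself flag; and inserting the cardinality hypothesis $|\mathbb D_\theta|\le R^{3/2-\beta}\mu c_\alpha^{-1}$ at this stage (by bounding the number of surviving columns, or the area of $\bigcup D$) makes the exponent \emph{worse}, not better, because the column $L^1$ bound $\|M_c\mu\|_1\lesssim c_\alpha(\mu)R^{(1-\alpha)/2}$ is already the sharpest information available about an individual column and the cardinality bound does not constrain the \emph{distribution} of mass among the surviving columns. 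The heavy-plank obstruction is real: the Knapp planks of Section~\ref{planktrain} sit entirely in the $k=j$ shell and saturate the $L^2$ estimate. The paper handles them by rescaling and re-invoking the lemma --- that self-reference is the missing idea.

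Two smaller points: the ``reduction to the single scale $2^j\sim R$ by monotonicity of $\int_{|\xi|\le 2^j}|\widehat{\mu}|^2$'' is not valid (the $L^2$ mass could concentrate at any $j$, and the paper keeps the full sum $J\le j\lesssim\log R$, rescaling each $(j,k)$ piece separately); and your column $L^1$ bound should read $c_\alpha(\mu)R^{1-\alpha}2^{(\alpha-1)k/2}$ rather than $c_\alpha(\mu)R^{1-\alpha}2^{k/2}$.
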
 
\begin{remark} Having $\mu \ast \phi_R$ instead of $\mu$ in \eqref{geometric} is equivalent to requiring that $\mu$ is constant at scale $R^{-1}$.  \end{remark}
\begin{proof} The proof is a bootstrapping argument which gradually lowers the value of $\beta$, which for simplicity will be written as a proof by contradiction. Let $\beta_{\inf}$ be the infimum over all positive $\beta$ for which the conclusion of the lemma is true. Suppose for a contradiction that $\beta_{\inf} >0$. Let $\beta_0 \in (\beta_{\inf}, 2 \beta_{\inf} )$. Clearly $\beta_{\inf} \leq 3/2$ (in particular it is finite); this follows from the inequality $\mu(\mathbb{R}^3) \leq c_{\alpha}(\mu)$. To obtain a contradiction, it will suffice to show that the lemma holds for all $\beta > c\beta_0$, where $c\in (0,1)$ is chosen sufficiently close to 1 to ensure that $1-c < \frac{\alpha-2}{100\beta_{\inf}}$. Let $\beta > c \beta_0$. Let $\delta>0$ be such that $\delta \ll \varepsilon(\beta_0)$ and $\delta \ll \beta$. The measure $\mu \ast \phi_R$ will be re-labelled as $\mu$ to simplify the notation (so it may be assumed that $\widehat{\mu}$ is negligible outside $B(0, R^{1+\delta})$). Let $J$ be such that $2^J \sim R^{\epsilon}$, where $\epsilon = 10^{-10} \min\{\beta, \alpha-2\}$.  Given $(j,k)$ and $\tau \in \Lambda_{j,k}$, let 
\begin{equation} \label{badplanks} \mathbb{T}_{\tau,b} = \left\{ T \in \mathbb{T}_{\tau} : \mu(4T) \geq 2^{-k \left(\frac{3}{2} -\beta_0\right)} 2^{-(j-k)\alpha - 10^{10} k \delta }  c_{\alpha}(\mu)\right\}, \end{equation}
and let
\begin{equation} \label{badpart} \mu_b = \sum_{j \geq J} \sum_{k \in [ j\epsilon, j] } \sum_{\tau \in \Lambda_{j,k} } \sum_{T \in \mathbb{T}_{\tau,b} }  M_T \mu, \qquad \mu_g = \mu - \mu_b. \end{equation}
 By the triangle inequality, followed by Hölder's inequality,
\begin{multline} \label{asterisk} \int_0^{2\pi} \int_{\bigcup_{D \in \mathbb{D}_{\theta}}} \left\lvert \pi_{\theta \#}\mu\right\rvert^p \, d\mathcal{H}^2 \, d\theta \lesssim \int_0^{2\pi} \int \left\lvert \pi_{\theta \#}\mu_b\right\rvert^p \, d\mathcal{H}^2 \, d\theta \\
+ \left(\frac{ \mu(\mathbb{R}^3 )}{c_{\alpha}(\mu)} \right)^{1- \frac{p}{2} }R^{-\beta\left( 1- \frac{p}{2} \right) } \left(\int_0^{2\pi} \int \left\lvert \pi_{\theta \#}\mu_g\right\rvert^2 \, d\mathcal{H}^2 \, d\theta\right)^{p/2}. \end{multline}
In what follows, negligible tail terms of the form $R^{-N} \mu(\mathbb{R}^3)$ will be left out, since whenever such terms dominate, the desired inequalities follow trivially. By the triangle inequality, followed by Hölder's inequality again,
\begin{multline*} \int_0^{2\pi} \int \left\lvert \pi_{\theta \#}\mu_b\right\rvert^p \, d\mathcal{H}^2 \, d\theta \\
\lesssim (\log R)^{O(1)} \sum_{j \geq J} \sum_{k \in [j\epsilon, j ] }   \int_0^{2\pi} \int \left\lvert \sum_{\tau \in \Lambda_{j,k}} \sum_{T \in \mathbb{T}_{\tau,b} }  \pi_{\theta \#} M_T \mu \right\rvert^p \, d\mathcal{H}^2 \, d\theta. \end{multline*}
Fix a pair $(j,k)$ occurring in the above sum. It will be shown that
\begin{multline} \label{decoupling}  \int_0^{2\pi}  \int \left\lvert \sum_{\tau \in \Lambda_{j,k}} \sum_{T \in \mathbb{T}_{\tau,b} }  \pi_{\theta \#} M_T \mu \right\rvert^p \, d\mathcal{H}^2 \,d \theta  \\
\lesssim 2^{kO(\delta)}  \int_0^{2\pi} \sum_{\substack{\tau \in \Lambda_{j,k}: \\ \left\lvert \theta_{\tau} - \theta \right\rvert \leq 2^{k\left(\delta-1/2\right)}}} \sum_{T \in \mathbb{T}_{\tau,b} }  \mu(2T)^p (m(T)2^{j-k})^{-(p-1)} \, d\theta, \end{multline}
where $\theta_{\tau}$ is defined by $\tau = \tau\left(\theta_{\tau}, j, k\right)$, and $m(T)$ is the Lebesgue measure of $T$. Assume first that $k=j$. The left-hand side of \eqref{decoupling} is 
\begin{multline*} \lesssim    \int_0^{2\pi} \int \left\lvert \sum_{\substack{\tau \in \Lambda_{j,k}: \\ \left\lvert \theta_{\tau} - \theta \right\rvert \leq 2^{k\left(\delta-1/2\right)}}} \sum_{T \in \mathbb{T}_{\tau,b} }  \pi_{\theta \#} M_T \mu \right\rvert^p \, d\mathcal{H}^2 \,d\theta \\
+
   \int_0^{2\pi} \int \left\lvert \sum_{\substack{\tau \in \Lambda_{j,k}: \\ \left\lvert \theta_{\tau} - \theta \right\rvert > 2^{k\left(\delta-1/2\right)}}} \sum_{T \in \mathbb{T}_{\tau,b} }  \pi_{\theta \#} M_T \mu \right\rvert^p \, d\mathcal{H}^2 \, d\theta. \end{multline*}
The second term is negligible by the trivial $L^{\infty}$ bound on each $\pi_{\theta \#} M_T \mu$, followed by the same stationary phase bound used in the $L^1$ case (see Lemma~5 of \cite{GGGHMW}). By the $\lesssim 2^{kO(\delta)}$-overlapping property of the $\pi_{\theta}(T)$'s, 
\begin{multline*}   \int_0^{2\pi}  \int \left\lvert \sum_{\substack{\tau \in \Lambda_{j,k}: \\ \left\lvert \theta_{\tau} - \theta \right\rvert \leq 2^{k\left(\delta-1/2\right)}}} \sum_{T \in \mathbb{T}_{\tau,b} }  \pi_{\theta \#} M_T \mu \right\rvert^p \, d\mathcal{H}^2 \,d \theta  \\
\lesssim 2^{kO(\delta)}  \int_0^{2\pi} \sum_{\tau \in \Lambda_{j,k}} \sum_{T \in \mathbb{T}_{\tau,b} }  \| \pi_{\theta \#} M_T \mu\|_{L^p(\mathcal{H}^2)}^p \, d\theta. \end{multline*}
By the uncertainty principle, the right-hand side of the above is bounded by the right-hand side of \eqref{decoupling}.

Now consider the case $k < j$. If $p$ were equal to 2 then the inequality
\begin{multline} \label{placeholder}   \int_0^{2\pi}  \int \left\lvert \sum_{\tau \in \Lambda_{j,k}} \sum_{T \in \mathbb{T}_{\tau,b} }  \pi_{\theta \#} M_T \mu \right\rvert^p \, d\mathcal{H}^2 \,d \theta  \\
\lesssim 2^{k(\delta)}  \int_0^{2\pi} \sum_{\tau \in \Lambda_{j,k}} \sum_{T \in \mathbb{T}_{\tau,b} }  \| \pi_{\theta \#} M_T \mu\|_{L^p(\mathcal{H}^2)}^p \, d\theta. \end{multline}
would follow by applying Plancherel in 2 dimensions, changing variables to an integral over $\mathbb{R}^3$, applying Plancherel in $\mathbb{R}^3$ to decouple the $L^2$ norms (as in \cite[Eq.~133]{GGGHMW}), reversing the change of variables, and then finally reversing the 2-dimensional Plancherel step. Fixing $(j,k)$ ensures that the Jacobian of the change of variables is the constant $2^{-j+k/2}$. If $p$ were equal to 1 then \eqref{placeholder} would follow from the triangle inequality (thus the case $p=2$ makes use of the averaging in $\theta$, but the case $p=1$ does not). This argument implies \eqref{decoupling} (more generally for any subcollection of $T$'s in the left-hand side) if either $p=1$ or $p=2$ (the right-hand side of \eqref{placeholder} is bounded by the right-hand side of \eqref{decoupling}; this follows straightforwardly from the uncertainty principle and stationary phase (Lemma~5 from \cite{GGGHMW})). The case $p = 4/3$ for \eqref{decoupling} holds by an application of the interpolation argument from \cite[Exercise~9.21]{demeter}; by pigeonholing it may be assumed that the sum over the $T$'s in the left-hand side of \eqref{decoupling} is over a subset of the $T$'s such that $\mu(2T)$ is constant up to a factor of 2, and then \eqref{decoupling} follows writing $p = (1-\phi) \cdot 1 + \phi \cdot 2$ and applying Hölder's inequality with $q = 1/(1-\phi)$ and $q' = 1/\phi$. This proves \eqref{decoupling}.

By Hölder's inequality applied to the right-hand side of \eqref{decoupling},
\begin{multline*} \int \left\lvert \pi_{\theta \#}\mu_b\right\rvert^p \, d\mathcal{H}^2 \lesssim (\log R)^{O(1)} \sum_{j \geq J} \sum_{k \in [j\epsilon, j ] } 2^{kO(\delta)} \sum_{m} \\
\int_0^{2\pi} \sum_{\substack{\tau \in \Lambda_{j,k}: \\ \left\lvert \theta_{\tau} - \theta \right\rvert \leq 2^{k\left(\delta-1/2\right)}}} \sum_{\substack{ T \in \mathbb{T}_{\tau,b} : \\
T \cap B_m \neq \emptyset } }  \int_{\pi_{\theta}(2T) } | \pi_{\theta\#}\mu_{m,j,k}|^p \, d\theta, \end{multline*}
where, for each pair $(j,k)$, $\{B_m\}_m$ be a boundedly overlapping cover of the unit ball by balls of radius $2^{-(j-k)+k\delta}$, and $\mu_{m,j,k}$ is the restriction of $\mu$ to $100 B_m$. Due the right-hand side of \eqref{decoupling}, the measure $\mu_{m,j,k}$ can be replaced by $\mu_{m,j,k} \ast \phi_{2^{j-k\delta}}$, which will be supressed in the notation. By rescaling each ball $B_m$ to a ball of radius $1$,
\begin{multline*} \int \left\lvert \pi_{\theta \#}\mu_b\right\rvert^p \, d\mathcal{H}^2 \lesssim (\log R)^{O(1)} \sum_{j \geq J} \sum_{k \in [j\epsilon, j ] } 2^{kO(\delta)} \sum_{m} \\
\int_0^{2\pi} \sum_{\substack{\tau \in \Lambda_{j,k}: \\ \left\lvert \theta_{\tau} - \theta \right\rvert \leq 2^{k\left(\delta-1/2\right)}}} \sum_{\substack{ T \in \mathbb{T}_{\tau,b} : \\
T \cap B_m \neq \emptyset } }  2^{2(j-k)(p-1)} \int_{\pi_{\theta}(2^{j-k-k\delta}2T) } \left\lvert \pi_{\theta\#} 2^{j-k-k\delta}_{\#} \mu_{m,j,k}\right\rvert^p \, d\theta, \end{multline*}
Each rescaled measure $2^{j-k-k\delta}_{\#} \mu_{m,j,k}$ and collection of rescaled planks $2^{j-k-k\delta}2T$ satisfies the hypotheses of the lemma; by the definition of $\beta_0$ and of the ``bad'' planks (and since $2^{j-k-k\delta}_{\#} \mu_{m,j,k}$ is essentially constant at scale $2^{-k}$). Moreover, $c_{\alpha}\left(2^{j-k-k\delta}_{\#} \mu_{m,j,k} \right) \leq 2^{-\alpha(j-k)+kO(\delta)} c_{\alpha}(\mu)$. Hence, by the conclusion of the lemma for $\beta_0$ and the assumption that $\alpha >2$, 
\begin{multline*} \int \left\lvert \pi_{\theta \#}\mu_b\right\rvert^p \, d\mathcal{H}^2 \\
 \lesssim (\log R)^{O(1)} \sum_{j \geq J} \sum_{k \in [j\epsilon, j ] } \sum_{m} \mu(100 B_m) 2^{kO(\delta)} c_{\alpha}(\mu)^{p-1} 2^{-k \varepsilon} \\
\lesssim 2^{-J \varepsilon} \mu(\mathbb{R}^3) c_{\alpha}(\mu)^{p-1} \lesssim R^{-\beta \varepsilon/10^4} \mu(\mathbb{R}^3) c_{\alpha}(\mu)^{p-1}.  \end{multline*}

It remains to bound the ``good'' part. By \eqref{asterisk}, it suffices to show that  
\begin{equation} \label{goodsufficient} \int_0^{2\pi} \int \left\lvert \pi_{\theta \#}\mu_g\right\rvert^2 \, d\mathcal{H}^2 \, d\theta \ll   \mu(\mathbb{R}^3) c_{\alpha}(\mu) R^{\beta \left(\frac{2}{p}-1 \right) }. \end{equation}
By the Fourier formula for the energy, the part away from the cone is bounded by $I_{2+100\epsilon}(\mu) \lesssim \mu(\mathbb{R}^3) c_{\alpha}(\mu)$ (this is the same as the bound for Eq.~107 from \cite{GGGHMW}). Therefore, by the same argument as in \cite{GGGHMW} (around \cite[Eq.113]{GGGHMW}), it suffices to prove that 
\[ \sum_{j \geq J} \sum_{k \in [\epsilon j, j ] } \sum_{\tau \in \Lambda_{j,k} } \sum_{T \in \mathbb{T}_{\tau, g} } 2^{-j + k/2+kO(\delta)} \|M_T \mu \|_2^2 \ll \mu(\mathbb{R}^3) c_{\alpha}(\mu) R^{\beta \left(\frac{2}{p}-1 \right) }. \]
The same argument as that in \cite{GGGHMW} gives (the $\alpha_0$ in \cite[Eq.~137]{GGGHMW} satisfies $\frac{\alpha_0+1}{2} = \frac{3}{2}-\beta_0$)
\begin{multline*} \sum_{j \geq J} \sum_{k \in [\epsilon j, j ] } \sum_{\tau \in \Lambda_{j,k} } \sum_{T \in \mathbb{T}_{\tau, g} } 2^{-j + k/2+kO(\delta)} \|M_T \mu \|_2^2 \lesssim  \mu(\mathbb{R}^3) c_{\alpha}(\mu) \times \\
\sum_{J \leq j \leq (1+\delta) \log_2(R) } \sum_{k \in [\epsilon j, j ] } 2^{-j + k/2+kO(\delta)} 2^{j (3-\alpha) + k \left( \frac{1}{2} - \frac{1}{p_{\dec}} \right) \left( -4 + 2\alpha - 2+2\beta_0\right) }, \end{multline*}
where $2 \leq p_{\dec} \leq 6$. By taking $p_{\dec} = 4$, this simplifies to
\[ \int_0^{2\pi} \int \left\lvert \pi_{\theta \#}\mu_g\right\rvert^2 \, d\mathcal{H}^2 \, d\theta
 \lesssim \mu(\mathbb{R}^3) c_{\alpha}(\mu) R^{O(\delta)} \max\left\{ 1, R^{\frac{\beta_0}{2} - \frac{(\alpha-2)}{2} } \right\}. \]
By the condition $(1-c) < \frac{\alpha-2}{100\beta_{\inf}} < \frac{\alpha-2}{\beta_{0}}$ and since $p=4/3$, this implies that 
\[ \int_0^{2\pi} \int \left\lvert \pi_{\theta \#}\mu_g\right\rvert^2 \, d\mathcal{H}^2 \, d\theta \leq R^{-\epsilon}  \mu(\mathbb{R}^3) c_{\alpha}(\mu) R^{\beta \left(\frac{2}{p}-1 \right) }, \]
which verifies \eqref{goodsufficient}. \end{proof}

	\begin{proof}[Proof of Theorem~\ref{projection43}] \begin{sloppypar} Let $p = 4/3$. Take $\beta>0$ such that $\beta \ll \alpha-2$, and let $\epsilon = \beta/10^{10}$. Choose $\delta \ll \varepsilon = \varepsilon(\beta)$. Write $\mu = \mu_b + \mu_g$, where the ``bad'' part is defined as in \eqref{badpart}, except that the parameter $\beta$ is used instead of $\beta_0$ in \eqref{badplanks}. By the triangle inequality, and then by following the same argument as in the proof of Lemma~\ref{energylemma}, 
	\begin{multline*} \left(\int \left\lvert \pi_{\theta \#}\mu_b\right\rvert^{p} \, d\mathcal{H}^2\right)^{1/p} \lesssim  \sum_{j \geq 1} \sum_{k \in [j\epsilon, j ] } \Bigg( O(j)\sum_{m} \int_0^{2\pi} \sum_{\substack{\tau \in \Lambda_{j,k}: \\ \left\lvert \theta_{\tau} - \theta \right\rvert \leq 2^{k\left(\delta-1/2\right)}}} \sum_{\substack{ T \in \mathbb{T}_{\tau,b} : \\
T \cap B_m \neq \emptyset } }  \\
 2^{2(j-k)(p-1) + kO(\delta)} \int_{\pi_{\theta}(2^{j-k-k\delta}2T) } \left\lvert \pi_{\theta\#} 2^{j-k-k\delta}_{\#} \mu_{m,j,k}\right\rvert^p \, d\mathcal{H}^2 \, d\theta\Bigg)^{1/p}, \end{multline*}
where each $2^{j-k-k\delta}_{\#} \mu_{m,j,k}$ is essentially constant on balls of radius $2^{-k}$. By Lemma~\ref{energylemma}, 
\begin{equation} \label{badlpbound} \int \left\lvert \pi_{\theta \#}\mu_b\right\rvert^{p} \, d\mathcal{H}^2 \lesssim c_{\alpha}(\mu)^{p-1} \mu(\mathbb{R}^3) \left(\sum_{j \geq 1} 2^{-j \varepsilon}\right)^p \lesssim c_{\alpha}(\mu)^{p-1} \mu(\mathbb{R}^3). \end{equation}
This bounds the ``bad'' part, so it remains to bound the ``good'' part. \end{sloppypar}

 By the argument from Lemma~\ref{energylemma} (or \cite{GGGHMW}), 
\begin{multline*} \int_0^{2\pi} \int \left\lvert \pi_{\theta \#}\mu_g\right\rvert^2 \, d\mathcal{H}^2 \, d\theta 
\lesssim I_{2+100\epsilon}(\mu) \\
+ \mu(\mathbb{R}^3) c_{\alpha}(\mu) \sum_{j \geq 1} \sum_{k \in [\epsilon j, j ] } 2^{-j + k/2+kO(\delta)} 2^{j (3-\alpha) + k \left( \frac{1}{2} - \frac{1}{p_{\dec}} \right) \left( -4 + 2\alpha - 2+2\beta\right) }. \end{multline*}
By taking $p_{\dec} = 4$ and using $\alpha > 2$ and $\beta \ll \alpha-2$, this is a decaying geometric series, which yields 
\[ \int_0^{2\pi} \int \left\lvert \pi_{\theta \#}\mu_g\right\rvert^2 \, d\mathcal{H}^2 \, d\theta \lesssim \mu(\mathbb{R}^3) c_{\alpha}(\mu). \] 
It was shown in \cite{GGGHMW} that $\pi_{\theta\#} \mu \in L^1$ and $\pi_{\theta\#}\mu_b \in L^1$ for a.e.~$\theta \in [0, 2\pi)$, and that
\[ \int_0^{2\pi} \int \left\lvert \pi_{\theta \#}\mu_b\right\rvert \, d\mathcal{H}^2 \, d\theta \lesssim \mu(\mathbb{R}^3).\]
By the triangle inequality, this gives 
\[ \int_0^{2\pi} \int \left\lvert \pi_{\theta \#}\mu_g\right\rvert \, d\mathcal{H}^2 \, d\theta \lesssim \mu(\mathbb{R}^3).\]
By writing $p = (1-\phi) 1 + \phi \cdot 2$ where $\phi = p-1$, and then applying Hölder's inequality to the function 
\[ \left\lvert \pi_{\theta \#}\mu_g\right\rvert^p = \left\lvert \pi_{\theta \#}\mu_g\right\rvert^{(1-\phi) \cdot 1} \cdot \left\lvert \pi_{\theta \#}\mu_g\right\rvert^{\phi \cdot 2}, \]
with $q = \frac{1}{1-\phi}$ and $q' = \frac{1}{\phi}$, this yields 
\begin{equation} \label{goodlpbound} \int_0^{2\pi} \int \left\lvert \pi_{\theta \#}\mu_g\right\rvert^p \, d\mathcal{H}^2 \lesssim c_{\alpha}(\mu)^{p-1} \mu(\mathbb{R}^3). \end{equation}
Combining \eqref{badlpbound} and \eqref{goodlpbound} gives 
\[ \int_0^{2\pi} \int \left\lvert \pi_{\theta \#}\mu\right\rvert^p \, d\mathcal{H}^2 \lesssim c_{\alpha}(\mu)^{p-1} \mu(\mathbb{R}^3); \]
by the triangle inequality. \end{proof}
 
\begin{proof}[Proof of Theorem~\ref{sl2maximal}] It may be assumed that all tubes $T \in \mathbb{T}$ have $\ell(T) = p \ast \mathbb{V}_{\theta}$ with $|\theta| \geq \pi/4$. For each $T \in \mathbb{T}$, let $w_T$ be such that $\ell(w_T)$ is the centre line of $T$. Let $\mu = \frac{1}{\delta}\sum_{T \in \mathbb{T}} \chi_{B(w_T, C\delta)}$, where $C$ is a large constant to be chosen. Then by the point-line duality argument in \eqref{simplify},  
\[ \left\lVert \sum_{T \in \mathbb{T} } \chi_T \right\rVert_{L^{4/3}(B(0,1) ) }^{4/3} \lesssim \int \| \pi_{\theta\#} \mu \|^{4/3}_{4/3} \, d\theta \leq C_{\epsilon} \delta^{-\epsilon} \mu(\mathbb{R}^3) c_2(\mu)^{1/3}, \]
where the constant $C$ is now chosen large enough to reverse the inequality in \eqref{presimplify}. But $\mu(\mathbb{R}^3) \lesssim \delta^2|\mathbb{T}|$, and $c_2(\mu) \lesssim 1$. This gives \eqref{dualkakeya}. \end{proof}

\section{An \texorpdfstring{$L^{6/5}$}{L6/5} inequality for projections onto lines} 
	
		\begin{lemma}  \label{energylemma2} Let $\alpha > 1$ and let $p=6/5$. For any $\beta >0$, there exists an $\varepsilon>0$ (depending on $\alpha$ and $\beta$) such that 
\begin{equation} \label{geometric2} \int_0^{2\pi} \int_{\bigcup_{D \in \mathbb{D}_{\theta}}} \left\lvert \rho_{\theta \#}(\mu \ast \phi_R)\right\rvert^p \, d\mathcal{H}^1 \, d\theta  \leq C_{\alpha,\beta} R^{-\varepsilon} \mu(\mathbb{R}^3) c_{\alpha}(\mu)^{p-1}, \end{equation}
for any $R > 1$, for any Borel measure $\mu$ on $B_3(0,1)$ with $c_{\alpha}(\mu) < \infty$, and for any family of sets $\{ \mathbb{D}_{\theta} \}$, where each $\mathbb{D}_{\theta}$ is a set of intervals of diameter $R^{-1}$ in $\spn(\gamma(\theta) )$, each with cardinality $\lvert\mathbb{D}_{\theta}\rvert \leq R^{1-\beta} \mu(\mathbb{R}^3)c_{\alpha}(\mu)^{-1}$, such that the integrand in \eqref{geometric2} is Borel measurable. \end{lemma}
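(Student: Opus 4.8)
The plan is to adapt, essentially verbatim, the bootstrapping-by-contradiction proof of Lemma~\ref{energylemma} to line projections, using the dictionary: replace $\pi_\theta$ by $\rho_\theta$, $\mathcal{H}^2$ by $\mathcal{H}^1$, the exponent $p=4/3$ by $p=6/5$, the critical dimension $2$ by $1$ (so ``$\alpha>2$'' becomes ``$\alpha>1$''), and the exponent $3/2$ appearing in the cardinality bound on $\mathbb{D}_\theta$ and in the threshold defining the ``bad'' planks by $1$ (a $\delta$-interval has length $R^{-1}$, so $\sim R$ of them tile a unit interval, versus $\sim R^{3/2}$ rectangles of size $R^{-1/2}\times R^{-1}$ tiling a unit square). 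Concretely, let $\beta_{\inf}$ be the infimum of all $\beta>0$ for which the conclusion holds; from $\mu(\mathbb{R}^3)\leq c_\alpha(\mu)$ one gets $\beta_{\inf}\leq 1<\infty$, and one supposes for a contradiction that $\beta_{\inf}>0$. Fix $\beta_0\in(\beta_{\inf},2\beta_{\inf})$ and reduce to proving the lemma for every $\beta>c\beta_0$, where $c\in(0,1)$ is chosen close enough to $1$ that $1-c<\frac{\alpha-1}{100\beta_{\inf}}$; this contradicts the definition of $\beta_{\inf}$. Fix such a $\beta$, put $\epsilon=10^{-10}\min\{\beta,\alpha-1\}$, $2^J\sim R^\epsilon$, choose $\delta\ll\varepsilon(\beta_0)$ and $\delta\ll\beta$, and relabel $\mu\ast\phi_R$ as $\mu$.

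One then uses a dyadic decomposition of the $\lesssim R^{1+\delta}$ part of a neighbourhood of the light cone $\{r\gamma(\theta)\}$ into planks $\tau=\tau(\theta,j,k)$, $0\leq k\leq j$, now adapted to the \emph{rulings} of the cone rather than to its tangent planes (that is, $\tau(\theta,j,k)$ is long, of length $\sim 2^j$, in the $\gamma(\theta)$ direction), together with a partition of unity $\{\psi_\tau\}$ and the operators $M_T\mu=\eta_T(\mu\ast\widecheck{\psi_\tau})$, where the dual planks $T\in\mathbb{T}_\tau$ are chosen so that $\rho_\theta(T)$ is an interval of length $\sim R^{-1}$. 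Define $\mathbb{T}_{\tau,b}=\{T\in\mathbb{T}_\tau:\mu(4T)\geq 2^{-k(1-\beta_0)}2^{-(j-k)\alpha-10^{10}k\delta}c_\alpha(\mu)\}$ and set $\mu_b=\sum_{j\geq J}\sum_{k\in[j\epsilon,j]}\sum_{\tau\in\Lambda_{j,k}}\sum_{T\in\mathbb{T}_{\tau,b}}M_T\mu$, $\mu_g=\mu-\mu_b$. Exactly as in \eqref{asterisk}, the triangle inequality followed by Hölder's inequality with exponents $2/p$ and $2/(2-p)$ (using $|\bigcup_{D\in\mathbb{D}_\theta}D|\lesssim|\mathbb{D}_\theta|R^{-1}\leq R^{-\beta}\mu(\mathbb{R}^3)c_\alpha(\mu)^{-1}$, which is the same bound as in the plane case) reduces the lemma to an $L^p$ bound for $\rho_{\theta\#}\mu_b$ together with the $L^2$ bound $\int_0^{2\pi}\int|\rho_{\theta\#}\mu_g|^2\,d\mathcal{H}^1\,d\theta\ll\mu(\mathbb{R}^3)c_\alpha(\mu)R^{\beta(2/p-1)}$.

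The heart of the ``bad'' estimate is the decoupling inequality analogous to \eqref{decoupling}: for fixed $(j,k)$,
\[ \int_0^{2\pi}\int\Big|\sum_{\tau\in\Lambda_{j,k}}\sum_{T\in\mathbb{T}_{\tau,b}}\rho_{\theta\#}M_T\mu\Big|^p\,d\mathcal{H}^1\,d\theta\lesssim 2^{kO(\delta)}\int_0^{2\pi}\sum_{\substack{\tau\in\Lambda_{j,k}:\\|\theta_\tau-\theta|\leq 2^{k(\delta-1/2)}}}\sum_{T\in\mathbb{T}_{\tau,b}}\mu(2T)^p\big(m(T)2^{j-k}\big)^{-(p-1)}\,d\theta. \]
As in Lemma~\ref{energylemma}, this follows by writing $p=(1-\phi)\cdot 1+\phi\cdot 2$ (here $\phi=p-1=1/5$) and interpolating the trivial $p=1$ case (the triangle inequality) with the $p=2$ case — proved by Plancherel in one dimension, a change of variables onto $\mathbb{R}^3$ whose Jacobian is a constant power of $2$ depending only on $(j,k)$, Plancherel in $\mathbb{R}^3$ to decouple the $L^2$ norms, and reversing these steps — after disposing of the non-stationary range $|\theta_\tau-\theta|>2^{k(\delta-1/2)}$ via the line-projection analogue of the stationary-phase estimate in Lemma~5 of \cite{GGGHMW}, which holds because $\det(\gamma,\gamma',\gamma'')$ is nonvanishing; the interpolation step itself is the microlocal-interpolation argument from \cite[Exercise~9.21]{demeter}. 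Given this, the rest of the ``bad'' estimate is as in Lemma~\ref{energylemma}: cover $B(0,1)$ by balls $B_m$ of radius $2^{-(j-k)+k\delta}$, rescale each to the unit ball, check that the rescaled measure and plank family satisfy the hypotheses at parameter $\beta_0$ (by the definition of the bad planks, the rescaled $\mathbb{D}_\theta$ has cardinality $\leq R^{1-\beta_0}\,(\cdots)$) with $c_\alpha\lesssim 2^{-\alpha(j-k)+kO(\delta)}c_\alpha(\mu)$, apply the inductive hypothesis at $\beta_0$, and sum the geometric series in $j,k$ using $\alpha>1$ to obtain $\int|\rho_{\theta\#}\mu_b|^p\,d\mathcal{H}^1\lesssim R^{-\beta\varepsilon/10^4}\mu(\mathbb{R}^3)c_\alpha(\mu)^{p-1}$. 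For the ``good'' estimate, the Fourier formula for the energy handles a lower-order contribution by $I_{1+100\epsilon}(\mu)\lesssim\mu(\mathbb{R}^3)c_\alpha(\mu)$ (the analogue of the $I_{2+100\epsilon}(\mu)$ term in Lemma~\ref{energylemma}), and the near-cone part reduces, exactly as around \cite[Eq.~(113)]{GGGHMW}, to bounding $\sum_{j\geq J}\sum_{k\in[\epsilon j,j]}\sum_{\tau\in\Lambda_{j,k}}\sum_{T\in\mathbb{T}_{\tau,g}}2^{-j+k/2+kO(\delta)}\|M_T\mu\|_2^2$ by means of the $\ell^{p_{\dec}}$-decoupling for the light cone in $\mathbb{R}^3$ ($2\leq p_{\dec}\leq 6$) together with the ``good''-plank bound on $\mu(4T)$; one chooses $p_{\dec}$ (as $p_{\dec}=4$ was chosen in Lemma~\ref{energylemma}) to make the exponent of the resulting geometric series negative when $\alpha>1$, $\beta\ll\alpha-1$ and $p=6/5$, which verifies the $L^2$ bound \eqref{goodsufficient} and completes the contradiction. (As in the passage from Lemma~\ref{energylemma} to Theorem~\ref{projection43}, this lemma will subsequently feed into Theorem~\ref{projection44} via an $L^1/L^2$ interpolation, using the absolute continuity of $\rho_{\theta\#}\mu$ for a.e.\ $\theta$ when $\alpha>1$ from \cite{KOV} — but that is not needed for the lemma itself.)

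I expect the main difficulty to be the exponent bookkeeping in the last step: the value $p=6/5$ is \emph{forced} by requiring simultaneously that the ``bad''-part geometric series converge (which needs $\alpha>1$) and that the ``good''-part $L^2$ bound, after the sharp $\ell^{p_{\dec}}$ cone decoupling, beat the factor $R^{\beta(2/p-1)}$ — exactly the line analogue of the way $\alpha>2$ and $p_{\dec}=4$ force $p\leq 4/3$ in Lemma~\ref{energylemma} — and getting this to close precisely at $p=6/5$ (and no larger) is the delicate computation; the gap $6/5<p\leq 3/2$ is presumably where this bootstrapping numerology stops working, as the counterexample in Proposition~\ref{wavepacket2} shows $p\leq 3/2$ is in any case necessary. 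A secondary technical point is verifying the stationary-phase decay of the one-dimensional Fourier transform of $\rho_{\theta\#}M_T\mu$ in the non-stationary angular range, which should however be a routine integration by parts using $\det(\gamma,\gamma',\gamma'')\neq 0$, exactly parallel to the plane case in \cite{GGGHMW}.
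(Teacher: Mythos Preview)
Your overall bootstrapping strategy (good/bad split, $p=1$ and $p=2$ endpoints interpolated via \cite[Exercise~9.21]{demeter}, inductive hypothesis for the bad part, cone decoupling for the good part) matches the paper's. However, you have over-transplanted the plane-projection machinery, and one of the transplanted formulas is not correct in the line setting. The point you missed is that $\widehat{\rho_{\theta\#}\mu}(t)=\widehat{\mu}(t\gamma(\theta))$ samples $\widehat{\mu}$ only on the light ray through $\gamma(\theta)$, so after the $\theta$-integration the $L^2$ analysis lives on (the $\sim 1$ neighbourhood of, by the uncertainty principle) the cone itself --- a $2$-dimensional set --- not on all of $\{|\xi_3|\leq|(\xi_1,\xi_2)|\}$. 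There is thus no distance-to-cone parameter $k$. The paper accordingly uses only a \emph{single-index} decomposition $\Lambda=\bigcup_j\Lambda_j$, where $\Lambda_j$ covers the $\sim 1$ neighbourhood of the cone in $B(0,2^j)\setminus B(0,2^{j-1})$ by standard $1\times 2^{j/2}\times 2^j$ planks; the bad-plank threshold is simply $\mu(2T)\geq 2^{-j(1-\beta_0)}c_\alpha(\mu)$; and there is no localisation to balls $B_m$ and no rescaling in the bad estimate --- the inductive hypothesis at $\beta_0$ is applied directly, at scale $2^j$, to $\mu\ast\phi_{2^{j(1-\delta)}}$ over the intervals $\rho_\theta(2T)$ for $T\in\mathbb{T}_{\tau,b}$. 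Your good-part reduction to $\sum_{j,k}2^{-j+k/2}\sum_{\tau}\sum_T\|M_T\mu\|_2^2$ carries the plane-projection Jacobian; the correct line-projection change of variables from $(t,\theta)$ to the cone gives $\sum_j 2^{-j+jO(\delta)}\sum_{\tau\in\Lambda_j}\sum_T\|M_T\mu\|_2^2$, and this is what the paper bounds (citing \cite{lengthprojections} rather than \cite{GGGHMW} for both this step and the stationary-phase input).

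On the final numerology: the paper takes $p_{\dec}=6$, giving the sufficient condition $\frac{-(\alpha-1)}{3}+\frac{2\beta_0}{3}<\beta\bigl(\tfrac{2}{p}-1\bigr)=\tfrac{2\beta}{3}$, which is exactly what closes for $p=6/5$ under $\beta>c\beta_0$ with $(1-c)<\tfrac{\alpha-1}{1000\beta_{\inf}}$. Your suggested analogue $p_{\dec}=4$ would yield $\tfrac{3-2\alpha}{4}+\tfrac{\beta_0}{2}<\tfrac{2\beta}{3}$, which fails for $\alpha$ close to $1$ and $\beta_0$ small, so it does not close the bootstrap.
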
 
\begin{proof} Relabel $\mu$ as $\mu \ast \phi_R$. Let $\beta_{\inf}$ be the infimum over all $\beta > 0$ such that the conclusion of the lemma is true. Clearly $\beta_{\inf} \leq 1$. Suppose for a contradiction that $\beta_{\inf} >0$. Let $\beta_0 \in \left( \beta_{\inf}, 2 \beta_{\inf} \right)$, and let $\delta>0$ be such that $\delta \ll \min\{\beta, \varepsilon(\beta_0)\}$. Let $\beta > c \beta_0$, where $c\in (0,1)$ is chosen sufficiently close to 1 to ensure that $(1-c) < \frac{\alpha-1}{1000 \beta_{\inf}}$. Let $J$ be such that $2^J \sim R^{\beta/10^{10} }$. To obtain a contradiction, it suffices to prove that the conclusion of the lemma holds for $\beta$. Write 
\[ \mu = \mu_b + \mu_g, \]
where 
\begin{equation} \label{mubaddefn} \mu_b := \sum_{j \geq J} \sum_{\tau \in \Lambda_j }  \sum_{T \in \mathbb{T}_{\tau, b } } M_T \mu, \qquad \mu_g := \mu-\mu_b, \end{equation}
\[ M_T \mu = \eta_T \left( \mu \ast \widecheck{\psi_{\tau } } \right), \]
and where $\{ \psi_{\tau} \}_{\tau \in \Lambda}$ is a smooth partition of unity on the $\sim 1$ neighbourhood of the light cone minus the set $B_3(0,1)$, $\Lambda = \bigcup_{j \geq 1} \Lambda_j$, and each $\Lambda_j$ is a boundedly overlapping cover of the $\sim 1$ neighbourhood of the light cone intersected with $B(0, 2^j) \setminus B(0, 2^{j-1} )$, by boxes of dimensions $\sim 1 \times 2^{j/2} \times 2^j$. For each $\tau$, the set $\mathbb{T}_{\tau}$ is a finitely overlapping cover of $\mathbb{R}^3$ by planks $T$ of dimensions $2^{j \delta} \times 2^{-j(1/2-\delta) } \times 2^{-j(1-\delta)}$ dual to $\tau$, and 
\begin{equation} \label{badplanksdefn} \mathbb{T}_{\tau,b} = \left\{ T \in \mathbb{T}_{\tau}: \mu(2T) \geq 2^{-j(1-\beta_0) } c_{\alpha}(\mu)\right\}. \end{equation}
By the triangle inequality and Hölder's inequality,
\begin{multline*} \int_0^{2\pi} \int_{\bigcup_{D \in \mathbb{D}_{\theta}}} \left\lvert \rho_{\theta \#} \mu\right\rvert^p \, d\mathcal{H}^1 \, d\theta \\
\lesssim \int_0^{2\pi} \int \left\lvert \rho_{\theta \#}\mu_b\right\rvert^p \, d\mathcal{H}^1 \, d\theta +  \left( R^{-\beta\left( \frac{2}{p} - 1\right) }\int_0^{2\pi} \int \left\lvert \rho_{\theta \#}\mu_g \right\rvert^2 \, d\mathcal{H}^1 \, d\theta\right)^{p/2}. \end{multline*} 
Similarly to the proof of Lemma~\ref{energylemma}, it may be assumed that negligible tail terms do not dominate the inequalities that follow, so they may be left out. By the triangle inequality followed by Hölder's inequality again,
\begin{equation} \label{doubleasterisk} \int_0^{2\pi} \int \left\lvert \rho_{\theta \#}\mu_b\right\rvert^p \, d\mathcal{H}^1 \, d\theta \lesssim (\log R)^{O(1)} \sum_{j \geq J}   \int_0^{2\pi} \int \left\lvert \sum_{\tau \in \Lambda_j} \sum_{T \in \mathbb{T}_{\tau,b} }  \rho_{\theta \#} M_T \mu \right\rvert^p \, d\mathcal{H}^1 \, d\theta. \end{equation}
Fix $j \geq J$. It will be shown that
\begin{multline} \label{decoupling2}  \int_0^{2\pi}  \int \left\lvert \sum_{\tau \in \Lambda_j} \sum_{T \in \mathbb{T}_{\tau,b} }  \rho_{\theta \#} M_T \mu \right\rvert^p \, d\mathcal{H}^1 \,d \theta  \\
\lesssim 2^{jO(\delta)}  \int_0^{2\pi} \sum_{\substack{\tau \in \Lambda_j: \\ \left\lvert \theta_{\tau} - \theta \right\rvert \leq 2^{j\left(\delta-1/2\right)}}} \sum_{T \in \mathbb{T}_{\tau,b} }  \mu(2T)^p 2^{j(p-1)} \, d\theta, \end{multline}
where $\theta_{\tau}$ is such that $\gamma(\theta_{\tau})$ is the centre line of $\tau$. If $p$ were equal to 2 then \eqref{decoupling2} would be a consequence of the following. For each $\theta$, apply Plancherel's theorem in one dimension to change the left-hand side of \eqref{decoupling2} to an integral over the light-cone  at distance $\sim 2^j$ from the origin. The uncertainty principle, using that $\mu$ is supported in the unit ball, changes this integral to an integral over $\mathbb{R}^3$ (essentially supported in the $\sim 1$ neighbourhood of the light cone in $\mathbb{R}^3$ at distance $\sim 2^j$ from the origin). Plancherel's theorem in $\mathbb{R}^3$ can then be used to decouple the $L^2$ norms (this is similar to the argument from Eq.~2.16 to Eq.~2.18 in \cite{lengthprojections}), and then \eqref{decoupling2} follows from the uncertainty principle (and the same holds if the set of planks in the left-hand side of \eqref{decoupling2} is replaced by any subcollection). If $p$ were equal to 1 then \eqref{decoupling2} would follow from the triangle inequality and stationary phase (Lemma~2.2 of \cite{lengthprojections}). This implies \eqref{decoupling2} if either $p=1$ or $p=2$. The case $p = 6/5$ for \eqref{decoupling2} holds by interpolation; by pigeonholing it may be assumed that the sum over the $T$'s in the left-hand side of \eqref{decoupling2} is over a subset of the $T$'s such that $\mu(2T)$ is constant up to a factor of 2, and then \eqref{decoupling2} follows writing $p = (1-\phi) \cdot 1 + \phi \cdot 2$ and applying Hölder's inequality with $q = 1/(1-\phi)$ and $q' = 1/\phi$. This proves \eqref{decoupling2}.

 By Hölder's inequality applied to the right-hand side of \eqref{decoupling2}, substituted into \eqref{doubleasterisk},
\begin{multline*} \int_0^{2\pi} \int \left\lvert \rho_{\theta \#}\mu_b\right\rvert^p \, d\mathcal{H}^1 \, d\theta \lesssim (\log R)^{O(1)} \sum_{j \geq J}  2^{j O(\delta)} \\
 \int_0^{2\pi}  \int_{\bigcup_{\substack{\tau \in \Lambda_j, T \in \mathbb{T}_{\tau, b } \\
|\theta_{\tau} - \theta| \leq 2^{-j(1/2-\delta) }}} \rho_{\theta}(2T)  } \left\lvert  \rho_{\theta \#} \left(\mu \ast \phi_{2^{j(1-\delta) } } \right) \right\rvert^p \, d\mathcal{H}^1 \, d\theta. \end{multline*}
By the definition of the ``bad'' part, this is $\lesssim R^{-\varepsilon\beta/10^{10}} \mu(\mathbb{R}^3) c_{\alpha}(\mu)^{p-1}$, so this bounds the ``bad'' part. 

For the ``good'' part, it suffices to prove that 
\[ \int_0^{2\pi} \int \left\lvert \rho_{\theta \#}\mu_g \right\rvert^2 \, d\mathcal{H}^1 \, d\theta \ll R^{\beta\left( \frac{2}{p} - 1\right) } \mu(\mathbb{R}^3) c_{\alpha}(\mu). \]
By similar working to that in \cite{lengthprojections}, it suffices to show that 
\[ \sum_{j \geq J} 2^{j(-1+O(\delta) ) } \sum_{\tau \in \Lambda_j} \sum_{T \in \mathbb{T}_{\tau, g } } \int_{\mathbb{R}^3}  \left\lvert M_T \mu \right\rvert^2 \ll R^{\beta\left( \frac{2}{p} - 1\right)} \mu(\mathbb{R}^3) c_{\alpha}(\mu). \]
Similar working to that in \cite{lengthprojections} yields that, for fixed $j$ with $j \geq J$, 
\[ \sum_{\tau \in \Lambda_j} \sum_{T \in \mathbb{T}_{\tau, g } } \int_{\mathbb{R}^3}  \left\lvert M_T \mu \right\rvert^2 \lesssim 2^{j \left( \frac{5-2\alpha}{p_{\dec} } +\frac{1}{2} + \beta_0 \left( \frac{p_{\dec}-2}{p_{\dec} } \right) \right) + jO(\delta) }\mu(\mathbb{R}^3) c_{\alpha}(\mu), \]
where $2 \leq p_{\dec} \leq 6$. Therefore, it suffices to show that 
\[ R^{\frac{5-2\alpha}{p_{\dec} } -\frac{1}{2} + \beta_0 \left( \frac{p_{\dec}-2}{p_{\dec} } \right)} \ll R^{\beta\left( \frac{2}{p} - 1\right) + O(\delta)}. \]
By taking $p_{\dec} = 6$, and using that $\delta$ is negligible, it suffices to check that 
\[ \frac{-(\alpha-1)}{3} + \frac{2\beta_0}{3} <  \beta\left( \frac{2}{p} - 1 \right). \]
This inequality does hold since $\beta> c \beta_0$ where $(1-c) < \frac{\alpha-1}{1000\beta_{\inf}}$, and $p=6/5$. \end{proof}
	
	\begin{proof}[Proof of Theorem~\ref{projection44}] Let $p = 6/5$. Take $\beta>0$ such that $\beta \ll \alpha-1$, and let $\delta>0$ be such that $\delta \ll \beta$. Write $\mu = \mu_b + \mu_g$, where the bad part is defined as in \eqref{mubaddefn}, except that the parameter $\beta$ is used in place of $\beta_0$ in \eqref{badplanksdefn}. By the triangle inequality, followed by the same argument as in the proof of Lemma~\ref{energylemma2}, 
	\begin{multline*} \left(\int_0^{2\pi} \int \left\lvert \rho_{\theta \#}\mu_b\right\rvert^{p} \, d\mathcal{H}^1 \, d\theta \right)^{1/p} \lesssim  \sum_{j \geq 1} \Bigg( 2^{jO(\delta)}  \\
\int_0^{2\pi} \sum_{\substack{\tau \in \Lambda_j: \\ \left\lvert \theta_{\tau} - \theta \right\rvert \leq 2^{j\left(\delta-1/2\right)}}} \sum_{ T \in \mathbb{T}_{\tau,b} } \int_{\rho_{\theta}(2T) } \left\lvert \rho_{\theta\#} \left(\mu \ast \phi_{2^{j(1-\delta)}} \right)\right\rvert^p \, d\mathcal{H}^1 \, d\theta\Bigg)^{1/p}. \end{multline*}
 By Lemma~\ref{energylemma2}, this yields
\begin{equation} \label{badlpbound2} \int_0^{2\pi} \int \left\lvert \rho_{\theta \#}\mu_b\right\rvert^{p} \, d\mathcal{H}^1 \, d\theta \lesssim c_{\alpha}(\mu)^{p-1} \mu(\mathbb{R}^3) \left(\sum_{j \geq 1} 2^{-j \varepsilon/10^{10}}\right)^p \lesssim c_{\alpha}(\mu)^{p-1} \mu(\mathbb{R}^3). \end{equation}
This bound the ``bad'' part.

It remains to bound the ``good' part. By the argument from Lemma~\ref{energylemma2} (or \cite[p.~11]{lengthprojections}), 
\[ \int_0^{2\pi} \int \left\lvert \rho_{\theta \#}\mu_g\right\rvert^2 \, d\mathcal{H}^1 \, d\theta \lesssim \mu(\mathbb{R}^3) c_{\alpha}(\mu) \sum_{j \geq 1} 2^{j \left( \frac{5-2\alpha}{p_{\dec} } -\frac{1}{2} + \beta \left( \frac{p_{\dec}-2}{p_{\dec} } \right) \right) + j O(\delta)}, \]
where $2 \leq p_{\dec} \leq 6$. By taking $p_{\dec} = 6$ and using $\alpha > 1$ and $\beta \ll \alpha-1$, this is a decaying geometric series, which yields 
\[ \int_0^{2\pi} \int \left\lvert \rho_{\theta \#}\mu_g\right\rvert^2 \, d\mathcal{H}^1 \, d\theta \lesssim \mu(\mathbb{R}^3) c_{\alpha}(\mu). \] 
It was shown in \cite[p.~14]{lengthprojections} that 
\[ \int_0^{2\pi} \int \left\lvert \rho_{\theta \#}\mu_b\right\rvert \, d\mathcal{H}^1 \, d\theta \lesssim \mu(\mathbb{R}^3).\]
By the triangle inequality, this implies that
\[ \int_0^{2\pi} \int \left\lvert \rho_{\theta \#}\mu_g\right\rvert \, d\mathcal{H}^1 \, d\theta \lesssim \mu(\mathbb{R}^3).\]
By writing $p = (1-\phi) 1 + \phi \cdot 2$ where $\phi = p-1$, and by applying Hölder's inequality to the function 
\[ \left\lvert \rho_{\theta \#}\mu_g\right\rvert^p = \left\lvert \rho_{\theta \#}\mu_g\right\rvert^{(1-\phi) \cdot 1} \cdot \left\lvert \rho_{\theta \#}\mu_g\right\rvert^{\phi \cdot 2}, \]
with $q = \frac{1}{1-\phi}$ and $q' = \frac{1}{\phi}$, this yields 
\begin{equation} \label{goodlpbound2} \int_0^{2\pi} \int \left\lvert \rho_{\theta \#}\mu_g\right\rvert^p \, d\mathcal{H}^1 \,d\theta \lesssim c_{\alpha}(\mu)^{p-1} \mu(\mathbb{R}^3). \end{equation}
Combining \eqref{badlpbound2} and \eqref{goodlpbound2} gives 
\[ \int_0^{2\pi} \int \left\lvert \rho_{\theta \#}\mu\right\rvert^p \, d\mathcal{H}^1 \, d\theta \lesssim c_{\alpha}(\mu)^{p-1} \mu(\mathbb{R}^3). \qedhere \] \end{proof}

\section{Some open problems} \begin{enumerate}
\item The first problem to mention is whether 4/3 can be raised to 3/2 in Theorem~\ref{sl2maximal}, or Theorem~\ref{projection43}. 

\item I do not know if the exponent 3/2 would be a reasonable conjectured sharp exponent to replace 6/5 in Theorem~\ref{projection44}, or if there are other counterexamples which show that the conjectured exponent should be lower. 

\item Another problem is whether $B(0,1)$ in Theorem~\ref{sl2maximal} can be replaced by $\mathbb{R}^3$; unlike for the standard Kakeya maximal function, the local and global versions do not seem to be obviously equivalent, since horizontal or $SL_2$ lines are not preserved by Euclidean translations. 

\item An open problem that lies in-between the stated intersection and projection theorems, is whether there is a ``restricted'' Besicovitch-Federer projection theorem in $\mathbb{R}^3$. \end{enumerate}

\end{document}